\colorlet{darkGreen}{green!50!black}
\let\cref@old@stepcounter\stepcounter
\def\stepcounter#1{%
  \cref@old@stepcounter{#1}%
  \cref@constructprefix{#1}{\cref@result}%
  \@ifundefined{cref@#1@alias}%
    {\def\@tempa{#1}}%
    {\def\@tempa{\csname cref@#1@alias\endcsname}}%
  \protected@edef\cref@currentlabel{%
    [\@tempa][\arabic{#1}][\cref@result]%
    \csname p@#1\endcsname\csname the#1\endcsname}}
\colorlet{myGreen}{green!50!black}
\definecolor{myBlue}{rgb}{0.25, 0.0, 1.0}
\definecolor{lgray}{rgb}{0.75, 0.75, 0.75}
\newcommand{\ourtitle}{Impartial Chess}
\newtheorem{theorem}{Theorem}[section]
\newtheorem{definition}[theorem]{Definition}
\newtheorem{lemma}[theorem]{Lemma}
\newtheorem{proposition}[theorem]{Proposition}
\newtheorem{corollary}[theorem]{Corollary}
\newtheorem{conjecture}[theorem]{Conjecture}
\newtheorem{example}[theorem]{Example}
\newtheorem{observation}[theorem]{Observation}
\newcommand{\R}{\symrook}
\newcommand{\Pawn}{\sympawn}
\newcommand{\Bishop}{\symbishop}
\newcommand{\Knight}{\symknight}
\newcommand{\C}{\mathscr G^{\pm}}
\newcommand{\K}{\symking}
\newcommand{\Q}{\symqueen}
\newcommand{\Queen}{\symqueen}
\DeclareMathOperator{\Mex}{mex}
\newcommand{\D}{d}
\newcommand{\trunc}{\textnormal{trunc}}
\newcommand{\br}[1]{\llbracket #1 \rrbracket}
\newcommand{\DAG}[2]{\textnormal{DAG}_{#1}(#2)}
\newcommand{\lp}{\mathrm{lp}}
\def\P{\mathcal{P}}
\def\N{\mathcal{N}}
\def\G{\mathbb{SG}}
\def\Dr{\mathcal{D}}
\def\SG{\G}
\def\Young{Young diagram}
\def\Youngs{Young diagrams}
\def\Y{\mathbb{Y}}
    \newcommand{\floor}[1]{\left\lfloor #1 \right\rfloor}
\newcommand{\gama}{%
\begin{tikzpicture}[scale=0.08]%
\draw (0,0) -- (0,-3)--(1,-3)--(1,0)
    (0,0)--(3,0)--(3,-1)--(0,-1)
    (0,-2)--(1,-2)
    (2,0)--(2,-1);%
\end{tikzpicture}%
}
\newcommand{\genstaircase}{%
\begin{tikzpicture}[scale=0.06]%
\draw (0,0) -- (0,-4)--(2,-4)--(2,-2)--(4,-2)--(4,0)--(0,0)
    (1,0)--(1,-4)
    (0,-1)--(4,-1)
    (2,0)--(2,-2)--(0,-2)
    (3,0)--(3,-2)    
    (0,-3)--(2,-3);
\draw[white] (4.4,0)--(4.4,-1);
\end{tikzpicture}%
}
\newcommand{\staircase}{%
\begin{tikzpicture}[scale=0.08]%
\draw (0,0) -- (0,-3)--(1,-3)--(1,0)
    (0,0)--(3,0)--(3,-1)--(0,-1)
    (0,-2)--(2,-2)
    (2,0)--(2,-2);%
\draw[white] (3.4,0)--(3.4,-1);
\end{tikzpicture}%
}
\newcommand{\rectangle}{%
\begin{tikzpicture}[scale=0.08]%
\draw (0,0) -- (0,-3)--(1,-3)--(1,0)
    (0,0)--(3,0)--(3,-1)--(0,-1)
    (0,-2)--(3,-2)--(3,-3)--(0,-3)
    (2,0)--(2,-3)--(3,-3)--(3,0);%
\draw[white] (3.4,0)--(3.4,-1);
\end{tikzpicture}%
}
\newcommand{\fgenstaircase}[3]{\genstaircase^{#3}_{#1,#2}}
\newcommand{\fstaircase}[1]{\staircase^{#1}}
\newcommand{\frectangle}[2]{\rectangle_{#1,#2}}
\newcommand{\fgama}[2]{\gama_{#1,#2}}
\newcommand{\gs}[3]{
\fgenstaircase{#1}{#2}{#3}
}
\newcommand{\picturefamilies}{
    \begin{tikzpicture}[]
 \begin{scope}[shift={(4,0)},scale=0.5]
\def\x{6,6,6,6}
\foreach \i [count=\xi] in \x{
    \draw [step=1.0, very thick] (\xi-1,0) grid (\xi,-\i);
}
 \end{scope}
 
  \begin{scope}[shift={(7.5,0)},scale=0.5]
\def\x{6,1,1,1}
\foreach \i [count=\xi] in \x{
    \draw [step=1.0, very thick] (\xi-1,0) grid (\xi,-\i);
}
 \end{scope}
 
  \begin{scope}[shift={(0,0)},scale=0.5]
\def\x{6,5,4,3,2,1}
\foreach \i [count=\xi] in \x{
    \draw [step=1.0, very thick] (\xi-1,0) grid (\xi,-\i);
}
 \end{scope}

   \begin{scope}[shift={(10.5,0)},scale=0.5]
\def\x{6,6,3,3}
\foreach \i [count=\xi] in \x{
    \draw [step=1.0, very thick] (\xi-1,0) grid (\xi,-\i);
}
 \end{scope}

\end{tikzpicture}
}
\newcommand{\picericferrer}{
\begin{tikzpicture}[scale=0.5, every node/.style={scale=1}]
 \begin{scope}[shift={(0,0)},scale=1]
\def\x{6,4,3,3,1}
\foreach \i [count=\xi] in \x{
    \draw [step=1.0, very thick] (\xi-1,0) grid (\xi,-\i);
}
\end{scope}
 \begin{scope}[shift={(7,0)},scale=1]
\def\x{5,4,4,2,1,1}
\foreach \i [count=\xi] in \x{
    \draw [step=1.0, very thick] (\xi-1,0) grid (\xi,-\i);
}
\end{scope}
 \begin{scope}[shift={(15,0)},scale=1]
\def\x{3,3,1}
\foreach \i [count=\xi] in \x{
    \draw [step=1.0, very thick] (\xi-1,0) grid (\xi,-\i);
}
\end{scope}
\end{tikzpicture}
}
\newcommand{\class}{

\begin{tikzpicture}[scale=0.5]
\def\unknown{unknown}
\def\textsizes{0.6}
\draw[black] (1,0)--node[fill=none,yshift=1mm,xshift=12mm,scale=\textsizes]{domestic}++(24,0)--++(0,13)--++(-24,0)--cycle;
\draw [blue] (1.5,2)--node[fill=none,yshift=1mm,xshift=12mm,scale=\textsizes]{tame}++(23,0)--++(0,10.5)--++(-23,0)--cycle;
\draw [darkGreen] (2,4)--node[fill=none,yshift=1mm,xshift=12mm,scale=\textsizes]{miserable}++(22,0)--++(0,8)--++(-22,0)--cycle;
\draw [red] (3,7)--node[fill=none,yshift=1mm,xshift=22mm,scale=\textsizes]{pet}++(16,0)--++(0,4)--++(-16,0)--cycle;
\draw [gray] (3.5,-2)--node[fill=none,yshift=1mm,scale=\textsizes]{forced}++(8,0)--++(0,12.5)--++(-8,0)--cycle;
\draw [cyan] (2.5,-2.5)--node[fill=none,yshift=1mm,xshift=-17.5mm,,scale=\textsizes]{returnable}++(17,0)--++(0,14)--++(-17,0)--cycle;

\node[scale=1] at (7.5,9.5) {$\Bishop$, $(\Pawn,\rectangle)$, $(\R,\staircase)$,};
\node[scale=1] at (7.5,8.5) {$(\Queen,\staircase)$, $(\Knight,\staircase)$,};
\node[scale=1] at (7.5,7.5) {$(\Dr,\staircase)$, $(\Dr,\rectangle)$};

\node at (7.5,6) {\textsc{Nim}, $(\K,\rectangle)$,};
\node at (7.5,5) {$(\R,\rectangle)$, $(\R,\genstaircase)$};
\node at (7.5,3) {$(\R,S_1)$};
\node at (7.5,1) {\unknown};
\node at (7.5,-1) {\unknown};

\node at (15.27,9.5) {\textsc{Sub},  $(\Dr,\genstaircase)$, $(\Knight,\rectangle)$,};
\node at (15.3,8.5) {$\Pawn$, $(\Pawn,\staircase)$,  $(\K,\staircase)$};

\node at (15.3,6) {$\textsc{Wyt}=(\Queen,\rectangle),$};
\node at (15.3,5) {$(\Queen,\genstaircase)$};
\node at (15.3,3) {$(\Queen,S_2)$};
\node at (15.45,1){$\Dr$, \textsc{LCTR}, $(\Knight,\genstaircase)$, $\Knight$};
\node at (15.3,-1) {\unknown};

\node at (21.8,7.8) {\unknown};
\node at (21.8,3) {\unknown};
\node at (21.8,1) {\unknown};
\node at (22,-0.8) {$\R$, \textsc{Mark},};
\node at (22,-1.8) {$\Queen$, $(\K,\genstaircase)$};
\end{tikzpicture}
}
\newcommand{\knightmoves}{
\begin{tikzpicture}[scale=0.3]
\begin{scope}
\def\x{14,14,13,12,12,12,12,9,5,3}
\foreach \i [count=\xi] in \x{
    \draw [step=1.0,black] (0,1-\xi) grid (\i,-\xi);}
\draw[fill=gray!40,pattern=north east lines] (0,-1)--++(1,0)--++(0,1)--++(-1,0)--cycle;
\draw[fill=gray!40,pattern=north east lines] (2,-2)--++(1,0)--++(0,1)--++(-1,0)--cycle;
\draw[fill=gray!40,pattern=north east lines] (4,-3)--++(1,0)--++(0,1)--++(-1,0)--cycle;
\draw[fill=gray!40,pattern=north east lines] (6,-4)--++(1,0)--++(0,1)--++(-1,0)--cycle;
\draw[fill=gray!40,pattern=north east lines] (8,-5)--++(1,0)--++(0,1)--++(-1,0)--cycle;
\draw[fill=gray!40,pattern=north east lines] (10,-6)--++(1,0)--++(0,1)--++(-1,0)--cycle;

\draw[fill=gray!40,pattern=north east lines] (1,-3)--++(1,0)--++(0,1)--++(-1,0)--cycle;
\draw[fill=gray!40,pattern=north east lines] (3,-4)--++(1,0)--++(0,1)--++(-1,0)--cycle;
\draw[fill=gray!40,pattern=north east lines] (5,-5)--++(1,0)--++(0,1)--++(-1,0)--cycle;
\draw[fill=gray!40,pattern=north east lines] (7,-6)--++(1,0)--++(0,1)--++(-1,0)--cycle;
\draw[fill=gray!40,pattern=north east lines] (9,-7)--++(1,0)--++(0,1)--++(-1,0)--cycle;

\draw[fill=gray!40,pattern=north east lines] (2,-5)--++(1,0)--++(0,1)--++(-1,0)--cycle;
\draw[fill=gray!40,pattern=north east lines] (4,-6)--++(1,0)--++(0,1)--++(-1,0)--cycle;
\draw[fill=gray!40,pattern=north east lines] (6,-7)--++(1,0)--++(0,1)--++(-1,0)--cycle;
\draw[fill=gray!40,pattern=north east lines] (8,-8)--++(1,0)--++(0,1)--++(-1,0)--cycle;

\draw[fill=gray!40,pattern=north east lines] (3,-7)--++(1,0)--++(0,1)--++(-1,0)--cycle;
\draw[fill=gray!40,pattern=north east lines] (5,-8)--++(1,0)--++(0,1)--++(-1,0)--cycle;
\draw[fill=gray!40,pattern=north east lines] (4,-9)--++(1,0)--++(0,1)--++(-1,0)--cycle;
\draw[fill=gray!40,pattern=north west lines](0,0)--(0,-1)--(1,-1)--(1,-2)--(2,-2)--(2,-3)--(3,-3)--(3,-4)--(4,-4)--(4,-5)--(5,-5)--(5,-6)--(6,-6)--(6,-7)--(7,-7)--(7,-8)--(9,-8)--(9,-7)--(12,-7)--(12,-3)--(13,-3)--(13,-2)--(14,-2)--(14,0)--cycle;

\end{scope}
\begin{scope}[shift={(17,0)}]
\def\x{6,5,4,2,1}
\foreach \i [count=\xi] in \x{
    \draw [step=1.0,black] (0,1-\xi) grid (\i,-\xi);}
\draw[fill=gray!40,pattern=north east lines](0,0)--(6,0)--(6,-1)--(5,-1)--(5,-2)--(4,-2)--(4,-3)--(2,-3)--(2,-4)--(1,-4)--(1,-5)--(0,-5)--cycle;;
\end{scope}

\begin{scope}[shift={(26,0)}]
\def\x{14,13,11,9,8,7,6,2}
\foreach \i [count=\xi] in \x{
    \draw [step=1.0,black] (0,1-\xi) grid (\i,-\xi);}
\draw[fill=gray!40,pattern=north west lines](0,0)--(14,0)--(14,-1)--(13,-1)--(13,-2)--(11,-2)--(11,-3)--(9,-3)--(9,-4)--(8,-4)--(8,-5)--(7,-5)--(7,-6)--(6,-6)--(6,-7)--(2,-7)--(2,-8)--(0,-8)--cycle;
\end{scope}
\end{tikzpicture}
}
\newcommand{\knightsquare}{
\begin{tikzpicture}[scale=0.2]
\draw[fill=gray!40,pattern=north east lines] (0,-1)--++(1,0)--++(0,1)--++(-1,0)--cycle;
\end{tikzpicture}
}
\newcommand{\pawnsquare}{
\begin{tikzpicture}[scale=0.2]
\draw[fill=gray!40,pattern=north west lines] (0,-1)--++(1,0)--++(0,1)--++(-1,0)--cycle;
\end{tikzpicture}
}
\newcommand{\piclabelequiv}{
\begin{tikzpicture}[scale=0.7, every node/.style={scale=0.7}]
\begin{scope}
\def\x{3,3,2}
\foreach \i [count=\xi] in \x{
    \draw [step=1.0,black] (0,1-\xi) grid (\i,-\xi);}
\node (3) at (0+0.5,0-0.5){$(0,0)$};
\node (2) at (2+0.5,-1-0.5){$(1,2)$};
\node (1) at (1+0.5,-2-0.5){$(2,1)$};
\draw[->] (3)--node[fill=none,yshift=-1mm,scale=0.7] {(1,2)}(2);
\draw[->] (3)--node[fill=none,yshift=-1mm,scale=0.7] {(2,1)}(1);
\end{scope}

\begin{scope}[shift={(4,0)}]
\def\x{4,3,3}
\foreach \i [count=\xi] in \x{
    \draw [step=1.0,black] (0,1-\xi) grid (\i,-\xi);}
\node (3) at (0+0.5,0-0.5){$(0,0)$};
\node (2) at (2+0.5,-1-0.5){$(1,2)$};
\node (1) at (1+0.5,-2-0.5){$(2,1)$};
\draw[->] (3)--node[fill=none,yshift=-1mm,scale=0.7] {(1,2)}(2);
\draw[->] (3)--node[fill=none,yshift=-1mm,scale=0.7] {(2,1)}(1);
\end{scope}
\end{tikzpicture}
}
\newcommand{\piclabelnotequiv}{

\begin{tikzpicture}[scale=0.7, every node/.style={scale=0.7}]
\begin{scope}
\def\x{2,2,2}
\foreach \i [count=\xi] in \x{
    \draw [step=1.0,black] (0,1-\xi) grid (\i,-\xi);}
\node (3) at (0+0.5,0-0.5){$(0,0)$};
\node (1) at (1+0.5,-2-0.5){$(2,1)$};
\draw[->] (3)--node[fill=none,yshift=-1mm,scale=0.7] {(2,1)}(1);
\end{scope}

\begin{scope}[shift={(4,0)}]
\def\x{3,3}
\foreach \i [count=\xi] in \x{
    \draw [step=1.0,black] (0,1-\xi) grid (\i,-\xi);}
\node (3) at (0+0.5,0-0.5){$(0,0)$};
\node (2) at (2+0.5,-1-0.5){$(1,2)$};
\draw[->] (3)--node[fill=none,yshift=-1mm,scale=0.7] {(1,2)}(2);
\end{scope}
\end{tikzpicture}
}
\newcommand{\picslambdak}{
\begin{tikzpicture}[scale=0.5]
\def\x{9,7,6,4,3,1}
\foreach \i [count=\xi] in \x{
    \draw [step=1.0,black] (0,1-\xi) grid (\i,-\xi);}
\def\x{3,1}
\begin{scope}[shift={(6,0)}]
\foreach \i [count=\xi] in \x{
    \draw [step=1.0,black,ultra thick] (0,1-\xi) grid (\i,-\xi);}
\end{scope}
\begin{scope}[shift={(3,-2)}]
\foreach \i [count=\xi] in \x{
    \draw [step=1.0,black,ultra thick] (0,1-\xi) grid (\i,-\xi);}
\end{scope}
\begin{scope}[shift={(0,-4)}]
\foreach \i [count=\xi] in \x{
    \draw [step=1.0,black,ultra thick] (0,1-\xi) grid (\i,-\xi);}
\end{scope}
\end{tikzpicture}
}
\newcommand{\picsgyoung}{
\begin{tikzpicture}[scale=0.5]
\begin{scope}
\def\x{5,5,3,2,1}
\foreach \i [count=\xi] in \x{
    \draw [step=1.0,black] (0,1-\xi) grid (\i,-\xi);}
\node at (0.5,-0.5){$2$};
\node at (1.5,-0.5){$1$};
\node at (2.5,-0.5){$0$};
\node at (3.5,-0.5){$2$};
\node at (4.5,-0.5){$1$};

\node at (0.5,-1.5){$0$};
\node at (1.5,-1.5){$3$};
\node at (2.5,-1.5){$2$};
\node at (3.5,-1.5){$1$};
\node at (4.5,-1.5){$0$};

\node at (0.5,-2.5){$2$};
\node at (1.5,-2.5){$1$};
\node at (2.5,-2.5){$0$};

\node at (0.5,-3.5){$1$};
\node at (1.5,-3.5){$0$};

\node at (0.5,-4.5){$0$};

\end{scope}

\begin{scope}[shift={(10,0)}]
\def\x{5,5,3,2,1}
\foreach \i [count=\xi] in \x{
    \draw [step=1.0,black] (0,1-\xi) grid (\i,-\xi);}
\node at (0.5,-0.5){$\N$};
\node at (1.5,-0.5){$\N$};
\node at (2.5,-0.5){$\P$};
\node at (3.5,-0.5){$\N$};
\node at (4.5,-0.5){$\N$};

\node at (0.5,-1.5){$\P$};
\node at (1.5,-1.5){$\N$};
\node at (2.5,-1.5){$\N$};
\node at (3.5,-1.5){$\N$};
\node at (4.5,-1.5){$\P$};

\node at (0.5,-2.5){$\N$};
\node at (1.5,-2.5){$\N$};
\node at (2.5,-2.5){$\P$};

\node at (0.5,-3.5){$\N$};
\node at (1.5,-3.5){$\P$};

\node at (0.5,-4.5){$\P$};

\end{scope}
\end{tikzpicture}
}
\begin{document}

\begin{frontmatter}

\title{Impartial Chess on Integer Partitions}

\author[inst1]{Eric Gottlieb}
\ead{gottlieb@rhodes.edu}
\affiliation[inst1]{organization={Rhodes College}, city={Memphis},
            state={Tennessee},
            country={ U.S.A.}}

\author[inst2]{Matjaž Krnc}
\ead{matjaz.krnc@upr.si}
\author[inst2]{Peter Muršič}
\ead{peter.mursic@famnit.upr.si}

\affiliation[inst2]{organization={University of Primorska}, city={Koper},
            country={Slovenia}
            }

\begin{abstract}
 Berlekamp proposed a class of impartial combinatorial games based on the moves of chess pieces on rectangular boards. 
 We generalize impartial chess games by playing them on Young diagrams and obtain results about winning and losing positions and Sprague-Grundy values for all chess pieces. 
 We classify these games, and their restrictions to sets of partitions known as rectangles, staircases, and general staircases, according to the approach of Conway, later extended by Gurvich and Ho. 
 The games \textsc{Rook} and \textsc{Queen} restricted to rectangles are known to have the same game tree as $2$-pile \textsc{Nim} and \textsc{Wythoff}, respectively, so our work generalizes these well-known games. 
\end{abstract}

\begin{keyword}
impartial game \sep chess \sep Young diagram \sep integer partition \sep combinatorial game

\MSC[2020] 
91A46 %
\sep 05A17 %

\end{keyword}

\end{frontmatter}

\vfill 
\begin{center}
\textit{In appreciation of Elwyn Berlekamp.}
\end{center}
\vfill
\vfill
\newpage

\section{Introduction}

In a series of YouTube videos, Berlekamp \cite{impchess, impchess1} introduced Impartial chess, a family of impartial combinatorial games based on the moves of various chess pieces that are played on rectangular boards. In this paper, we formalize and extend the study of these games by playing them on Young diagrams of integer partitions. The extension to non-rectangular boards is inspired by previous work on LCTR \cite{Gottlieb2022LCTR}, the mis\`ere form of which, known as \textsc{Downright}, is similar in spirit to Berlekamp's impartial chess. 

Conway \cite{Con76} classified impartial games using certain properties that depend on the interaction of normal and misere play. Gurvich and Ho \cite{GURVICH201854} subsequently undertook a similar effort and showed some of their categories to be equivalent to those of Conway. We refer to the unification of these schemes as the \emph{Conway-Gurvich-Ho (CGH) classification}.  
The CGH classification of \textsc{Nim}, \textsc{Subtraction}, \textsc{Wythoff}, \textsc{Mark}, \textsc{LCTR}, and \textsc{Downright} are established in \cite{Gottlieb2022LCTR,Con76,GURVICH201854}. 
We determine Sprague-Grundy values or winning and losing positions for impartial chess games of all chess pieces on the sets of partitions known as rectangles, staircases, and general staircases.
Some of the games we study occupy the same region of the CGH classification scheme as one of the above games.
Played on a particular set of partitions, we show that \textsc{Rook} occupies a region that was not previously known to contain any game, while \textsc{Queen} occupies a region that was only known to contain a single small artificial game.

This paper is structured as follows. In \cref{sec:prelims}, we provide background on partitions, Young diagrams, impartial games, and Sprague-Grundy theory. In \cref{sec:impchess}, we describe the impartial chess games {\sc Downright}, {\sc Pawn}, {\sc Knight}, {\sc Bishop}, {\sc King}, {\sc Queen}, and {\sc Rook}. In \cref{sec:dag}, we use directed acyclic graphs associated to games to define partition-equivalence and game-equivalence of impartial chess games. In \cref{sec:pawnknight}, we discuss partition- and game-equivalence of \textsc{Pawn}, \textsc{Knight}, and \textsc{Downright}. In \cref{sec:king}, we establish Sprague-Grundy values for \textsc{King} on certain classes of partitions. We characterize losing positions for \textsc{Rook} in \cref{sec:rook} and explain why such characterization for \textsc{Queen} is likely to be difficult in \cref{sec:queen}. In \cref{sec:misere} we take two different approaches to the study of the mis\`ere forms of impartial chess games. The first involves an operation on directed acyclic graphs known as truncation. The second concerns a classification scheme, which we refer to as CGH classification, based on the interaction between normal and mis\`ere forms of a given game. In \cref{sec:questions} we offer some directions for further investigation. 

\section{Preliminaries}\label{sec:prelims}
In this section, we provide background information on partitions, \Youngs{}, impartial games, Sprague-Grundy theory, and complexity theory. To simplify expressions throughout the paper we denote the set of integers, nonnegative integers, and positive integers by $\mathbb Z, \mathbb Z^{\geq 0}$, and $\mathbb Z^{>0}$, respectively. For integers $a$ and $b$ with $a \leq b$, we denote the set $\{a, a+1, \ldots, b\}$ by $[a, b]$ and denote $[1,b]$ by $[b]$. We define $[a, b] = \emptyset$ if $a > b$.

A \emph{directed acyclic graph} (DAG) is a directed graph $G=(V, E)$ which does not contain a directed cycle as a subgraph (for standard graph-theoretic definitions see e.g. \cite{diestel2024graph}).
The length of the longest directed path in $G$ is denoted by $\lp(G)$.

\subsection{\Youngs{} and integer partitions}\label{sec:partitions}

Let $n$ be a nonnegative integer. If $n = \lambda_1 + \cdots + \lambda_r$, where $r$ is a nonnegative integer, each $\lambda_j$ is an integer, 
and $\lambda_1 \geq \cdots \geq \lambda_r > 0$, then we say that $\lambda = \br{\lambda_1, \dots, \lambda_r}$ is a \emph{partition} of $n$. 
The unique partition of 0 is $\br{}$. 
The $\lambda_j$'s are called the \emph{parts} of $\lambda$. 
When some of the parts of a partition are equal, we may abbreviate $\br{\lambda_1^{m_1}, \dots, \lambda_k^{m_k}}$, where $\lambda_j$ appears $m_j\ge0$ times. 

Let $\Y$ denote the set of all partitions of nonnegative integers and let $\Y^+=\Y \setminus \{\br{}\}$. Let $\lambda = \br{\lambda_1, \lambda_2, \ldots, \lambda_r}$ and $\beta = \br{\beta_1, \ldots, \beta_m}$ be partitions. 
We define a partial order $\leq$ on $\mathbb Y$ by 
\[
\lambda \leq \beta\, \iff  r \leq m \text{ and } \lambda_i \leq \beta_i
\]
 for all $i\in[r]$
The relation $\leq$ defines the \emph{Young lattice}; see \cite{Kreweras1965,stanley1999enumerative}.

The \emph{\Young{}} of $\lambda$ is a left-justified array of square cells in which the $i$th row from the top contains $\lambda_i$ cells. No distinction is made between a partition and its \Young{}. 
The \emph{conjugate} partition $\lambda'$ of $\lambda\in \Y^+$ is defined to be $\lambda'=\br{\lambda'_1,\ldots,\lambda'_{\lambda_1}}$ 
where $\lambda'_j$ is the largest integer such that $\lambda_{\lambda'_j}\geq j$; see \cref{fig:ferrer}. 
Thus, the $j$th column of $\lambda$ contains $\lambda'_j$ cells. 
The \Young{} of $\br{}$ has no cells and $\br{}' = \br{}$. 

\begin{figure}[h]
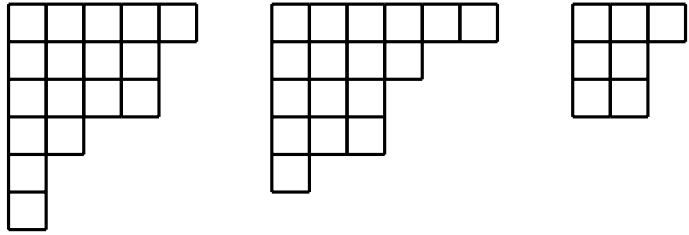

\centering
\picericferrer
\caption{The \Youngs{} of $\lambda=\br{5, 4^2, 2, 1^2}$ (left), its conjugate $\lambda'=\br{6, 4, 3^2, 1}$ (middle), and the subpartition $\lambda[0,2]=\lambda'[1,1]=\br{3,2,2}$ (right).} \label{fig:ferrer}
\end{figure}

Let $\lambda\in\Y^+$, and $i,j \in \mathbb Z^{\geq 0}$.
The \emph{subpartition} $\lambda[i,j]$  of $\lambda$ is defined as $\lambda[i, j] = \br{\lambda_{i+1} - j, \lambda_{i+2} - j, \dots}$, where trailing nonpositive entries are removed. 
Thus $\lambda[0, 0] = \lambda$ for any $\lambda\in \mathbb Y^+$. If $\lambda[i,j] \neq \br{}$, we say that such subpartition is \emph{well-defined}. We define a \emph{corner} of a nonempty partition $\lambda$ to be any pair of integers $(i,j)$ such that  $\lambda[i,j]=\br{1}$. 

Now let $\lambda=\br{\lambda_1,\dots,\lambda_r}> \br1$. Define a partition $\lambda^-$ to be the partition obtained from $\lambda$ by removing all of its corners. 
 More formally, 
 \begin{align*}
    \lambda^-=
    \begin{cases}
    \br{\lambda_1^-,\dots,\lambda_r^-} & \text{if }\lambda_r^-\neq 0 ,\\
    \br{\lambda_1^-,\dots,\lambda_{r-1}^-}& \text{otherwise}
    \end{cases}
\end{align*}
 where 
\[
\lambda_i^-=
\begin{cases}
    \lambda_i-1 & \text{if } i=r \text{ or } \lambda_{i}>\lambda_{i+1},\\
    \lambda_i & \text{otherwise}
\end{cases}
\]
for any $i\in[r]$.
For any $i,j$ such that $\lambda[i,j]>\br{1}$, we have $\lambda[i,j]^{-}=\lambda^{-}[i,j]$.

For a nonempty partition $\lambda = \br{\lambda_1, \ldots, \lambda_r}$ and a positive integer $j \leq \lambda_1$ we define the \emph{$j$th column of $\lambda$} to be the set of pairs $(i, j)$ for which the subpartitions 
$\{\lambda[i-1,j-1]\mid i\in \mathbb Z^{>0}\}$ are well-defined. These pairs graphically match the cells in the the $j$th column of the Young diagram of $\lambda$.
Similarly, for a positive integer $i\le r$ we define the \emph{$i$th row of $\lambda$} to be the set 
of pairs $(i, j)$ for which the subpartitions 
$
\{\lambda[i-1,j-1]\mid j \in \mathbb Z^{>0}\}
$ are well-defined. 

For all non-negative integers $i_1,i_2,j_1,j_2$ such that $\lambda[i_1 + i_2, j_1 + j_2]$ is well-defined, we have the identity  
\begin{align}\label{eq:chainsubpartition}
(\lambda[i_1,j_1])[i_2,j_2]=\lambda[i_1+i_2,j_1+j_2].
\end{align}
For a partition $\lambda=\br{\lambda_1, \dots, \lambda_r}$ the \emph{Durfee length} is defined  to be
$
\D(\lambda)=\max \{\ell \mid \lambda_\ell\ge \ell \}.
$
The \emph{(Dyson's) rank} of $\lambda$ is defined to be $|\lambda_1-r|$, see \cite{dyson1944some}. 

\begin{definition}[Partition Families]\label{def:partition-families}
Given positive integers $r, c$, and $k$, the  \emph{rectangle}, \emph{staircase}, \emph{hook}, and \emph{generalized staircase} partitions are defined respectively as 
\begin{align*}
    \frectangle{r}{c}&= \br{c^r}, 
    &\fstaircase{k}&=\br{k,k-1,k-2,\ldots, 1}, \\
    \fgama{r}{c}&= \br{c, 1^{r-1}},
    &\fgenstaircase rck&=\br{ (kc)^r, ((k-1)c)^r, \ldots, c^r }.
\end{align*} 
\end{definition}
\noindent
See \cref{fig:symgames} for examples. These definitions imply $\fgenstaircase 11k = \fstaircase k$
and
$\fgenstaircase rc1 = \frectangle rc$. 

\begin{figure}
    \centering
 \picturefamilies
    \caption{%
    Partitions corresponding to $\protect\fstaircase{6}$,  $\protect\frectangle64$, $\protect\fgama{6}{4}$ and
    $\protect\fgenstaircase{3}{2}{2}$.
    }
    \label{fig:symgames}
\end{figure}

\noindent The parts of generalized staircases are easily computed from $r, c$, and $k$. 
\begin{observation}\label{obs:row_length}
Let $\lambda=\gs rck=\br{\lambda_1,\ldots,\lambda_{rk}}$ be any generalized staircase and let $i\in \{1,\ldots,rk\}$. 
Then  
\[\lambda_{i}=c \left( k-\floor{\frac{i-1}{r}} \right).\]
\end{observation}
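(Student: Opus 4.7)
The plan is to prove this by directly unpacking the definition of $\gs{r}{c}{k}$. Recall from \cref{def:partition-families} that
\[
\gs{r}{c}{k} = \br{(kc)^r, ((k-1)c)^r, \ldots, c^r},
\]
so $\lambda$ consists of $k$ consecutive ``blocks'' of $r$ equal-length rows each: the first block has parts of size $kc$, the second block has parts of size $(k-1)c$, and in general the $m$th block (for $m \in [k]$) consists of $r$ rows of length $(k-m+1)c$.

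First, I would identify the block that contains row $i$. The $m$th block occupies rows $(m-1)r + 1, (m-1)r + 2, \ldots, mr$, so row $i$ lies in block $m$ precisely when $(m-1)r + 1 \le i \le mr$, which is equivalent to $m - 1 \le \frac{i-1}{r} < m$. Since $m$ is a positive integer, this is in turn equivalent to $m = \floor{\frac{i-1}{r}} + 1$.

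Next, I would substitute this value of $m$ into the formula $\lambda_i = (k - m + 1)c$ coming from the block description, yielding
\[
\lambda_i = (k - m + 1)c = \left(k - \floor{\frac{i-1}{r}}\right) c,
\]
which is the desired identity. Finally, I would verify that the range $i \in \{1, \ldots, rk\}$ indeed corresponds to $m \in \{1, \ldots, k\}$, so the argument covers every part of $\lambda$. There is no real obstacle here; the only thing to be careful about is the off-by-one bookkeeping in translating the block index $m$ into a floor expression, which is handled cleanly by the equivalence $m = \floor{(i-1)/r} + 1$ above.
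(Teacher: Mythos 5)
Your proof is correct and is exactly the direct verification the paper intends: the statement is left as an unproved observation precisely because it follows from unpacking the block structure of $\gs rck$ as you do, and your bookkeeping with $m = \floor{(i-1)/r} + 1$ is accurate.
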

\subsection{Impartial games and Sprague-Grundy values}
\label{sec:imgames}\label{def:PN-recursion}

In this paper, our focus will be exclusively on finite impartial combinatorial games, which we will refer to simply as \emph{games}. A game consists of a set of rules and a (starting) position. If the rules are clear from context we shall not differentiate between the terms position and game. The positions in the games we study are partitions or \Youngs{}, which we describe in the following subsection. A \emph{subposition} of a game is a position that can be reached after a nonnegative number of 
moves. A combinatorial game is \emph{short} if no position can be repeated, and if there exist finitely many subpositions. 

A \emph{terminal position} of a game is a position from which no further moves are possible. 
Under normal play, the player who moves to a terminal position is the winner. Under misère play, the player who moves to a terminal position is the loser. In this paper, we assume normal play unless otherwise indicated.

Let $G$ be a game under normal (resp. misère) play. 
We define the set of its winning positions $\N_G$, 
and the set of its losing positions $\P_G$, as follows:
\begin{enumerate}
    \item Terminal positions are in $\P_G$ (resp. $\N_G$). \label{def:PN-recursion1}
    \item A position is in $\P_G$ if none of its moves leads to a position in $\P_G$.
    \label{def:PN-recursion2}
    \item A position is in $\N_G$ if there is a move to a position in $\P_G$.
    \label{def:PN-recursion3}
\end{enumerate}
We denote $\P_G$ by $\P$ and $\N_G$ by $\N$ when the game is clear from context.

A short two player combinatorial game is \emph{impartial} if both players have the same possible moves in each position.
Let $p$ be a position of such a game $G$. 
If there is a move from $p$ to position $p'$, we write $p \to p'$. 
For a finite set $S$ of nonnegative integers the {\it minimum excluded value} of $S$
is defined as the smallest nonnegative integer that is not in $S$ and
is denoted by  $\Mex (S)$. 
The \emph{Sprague-Grundy} value ($\G$-value) of a position $A$ within the game $G$ is defined recursively by $\G_G(p) = \Mex (\{\G_G(p') \mid p \to p'\}).$
When $G$ is clear from context we denote $\G_G(p)$ by $\G(p)$.
If $\G(p) =t$ then $p$ is called a {\it $t$-position}. 
If $p$ is terminal then $\G(p) = \Mex(\emptyset)=0$.

As introduced in Conway~\cite{Con76}, the Misère Grundy value ($\mathcal G^-$-value) of a position $A$ within the game $G$ is defined recursively the same way as the $\G$-value of $A$, with distinction of terminal positions having $\mathcal G^-$-value $1$.
A position with $\G$-value $t$ and $\mathcal G^-$-value $\ell$ is called a {\it $(t,\ell)$-position}. The pairs $(t, \ell)$ are used to classify combinatorial games as described in \cite{Con76} and \cite{GURVICH201854}. 

$\G$-values provide a useful mechanism for determining winning and losing positions for short impartial games under normal play.
In particular, the $\P$-positions are precisely those that have $\G$-value $0$ while the $\N$-positions are those with nonzero $\G$-values. The Sprague-Grundy  theorem \cite{Sie13} shows that the $\G$-value of the (disjunctive) sum of impartial games under normal play can be obtained by replacing every game-summand with a {\sc Nim}-pile, with the size corresponding to the $\G$-value of the game. For detailed coverage of combinatorial game theory we refer the reader to \cite{Sie13}.

\section{Impartial chess} 

We now describe the family of games played on Young diagrams and inspired by the moves of chess pieces. 
We begin by defining movesets that describe the allowable moves for each chess piece. 

\subsection{Impartial chess games}\label{sec:impchess}

In \cite{Gottlieb2022LCTR}, we studied the mis\`ere version of \textsc{LCTR} and showed that it is equivalent to a normal-form game played on partitions which we refer to as \textsc{Downright}. \textsc{Downright} is closely related to a family of games played on rectangles introduced by Berlekamp \cite{berlekamp2017winning1}. All of these games can be played on partitions and expressed in the language of movesets, which we now explain. 

An \emph{impartial chess} game is a pair $(M, \lambda)$, where $M\subseteq \mathbb Z^{\geq 0}\times \mathbb Z^{ \geq 0} \setminus \{(0,0)\}$ and $\lambda \in \mathbb Y^+$. We refer to $M$ as a \emph{moveset} and an element of $M$ as a \emph{move}. 
On their turn, the players choose a move $(i,j) \in M$, such that $\lambda[i,j]\in Y^+$ and move to $\lambda[i,j]$, passing the turn. 
The above implies that impartial chess games are  finite and short\footnote{For a definition of finite and short impartial game we refer the reader to \cite{Con76}.}.
In addition, we define $(i,j)^+$ as the set 
$\{(ki, kj)\}_{k\in \mathbb Z^{>0}}$.
The impartial chess games presented below, which we study in sections \cref{sec:king,sec:pawnknight,sec:rook,sec:queen}, differ in the choice of moveset. 

\begin{definition}
\label{def:chesspieces}
We study the following impartial chess games. 

\begin{itemize}
    \item {\sc Downright} is  played on the moveset $\Dr = \{(0,1),(1,0)\}$. 
    \item {\sc Pawn} is  played on the moveset $\Pawn = \{(0,1),(1,1)\}$.    
    \item {\sc Knight} is  played on the moveset $\Knight = \{(1,2),(2,1)\}$.
    \item {\sc Bishop} is  played on the moveset $\Bishop = (1,1)^+$.
    \item {\sc King} is  played on the moveset $\K = \{(0,1),(1,0),(1,1)\}$.
    \item {\sc Rook} is  played on the moveset $\R = (1,0)^+ \cup (0,1)^+$.
    \item {\sc Queen} is  played on the moveset $\Q = (1,1)^+ \cup (1,0)^+\cup (1,1)^+$.
\end{itemize}
\end{definition}

Let $(M,\lambda)$ be an impartial chess game and let $(i,j) \in M$ be such that $\lambda[i,j]$ is well-defined. To simplify notation we write $(M, \lambda)[i,j] = (M, \lambda[i,j])$ and $\SG_{(M,\lambda)}(\lambda[i,j])=\SG_M(\lambda[i,j])$.
Furthermore, define $M^{-1}=\{(a,b)\mid (b,a)\in M\}$,  and let   $(M,P)=\{(M,\lambda)\mid \lambda \in P\}$, for any $P\in \Y^+$.

Impartial chess games conveniently allow for easy visualization of $\P/\N$-values and Sprague-Grundy values of subpartitions of a given partition. Specifically, we can write the $\P/\N$-value, or the Sprague-Grundy value, of $\lambda[i, j]$ in the 
cell corresponding to $(i+1)$th row and $(j+1)$th column of $\lambda$, as in \cref{fig:sgyoung}. 
We will say, e.g., the $i$th row of $\lambda$ contains $\P$ when $\lambda[i-1,j] \in \P$ for some $j$. 

\begin{figure}
    \centering
    \picsgyoung
    \caption{Visual representation of Sprague-Grundy values (left), and $\P$/$\N$ value (right) for $(\K,\lambda)$ with $\lambda\le \br{5,5,3,2,1}$.}
    \label{fig:sgyoung}
\end{figure}

\subsection{Directed acyclic graphs, partition-equivalence, and game-equivalence\label{sec:dag}}
Given an impartial chess game $(M,\lambda)$ we define a directed acyclic graph $\DAG{M}{\lambda}$ recursively as follows.
$(0,0)$ is a node of $\DAG{M}{\lambda}$, and if $(i,j)$ is a node of $\DAG{M}{\lambda}$ and $(y,x) \in M$ and $\lambda[i+y,j+x]$ is well defined, then $(i+y, j+x)$ is a node of $\DAG{M}{\lambda}$ and $(i,j) \to (i+y, j+x)$ is an edge of $\DAG{M}{\lambda}$ with label $(y,x)$. 
\begin{figure}
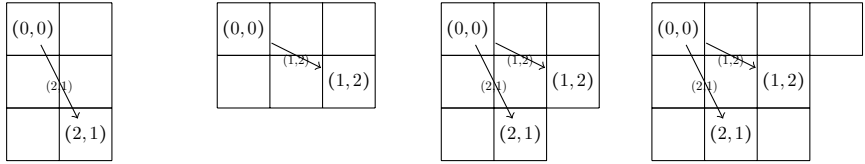
 
    \centering
    \subcaptionbox{$\br{2,2,2}$ and $\br{3,3}$ are game-equivalent but not partition-equivalent in $\Knight$.\label{fig:piclabelequiv}}[.45\textwidth]{\piclabelnotequiv}
    $\quad$
    \subcaptionbox{{$\br{3,3,2}$ and $\br{4,3,3}$ are game-equivalent and partition-equivalent in $\Knight$.}\label{fig:piclabelnotequiv}}[.45\textwidth]{\piclabelequiv}
    \caption{Partition equivalence depends on equality, not isomorphism, of DAGs.}
    \label{fig:piclabelequivyesno}
\end{figure}

For a fixed moveset $M$, we define an equivalence relation on $\Y^+$ in which two partitions $\lambda$ and $\mu$ are equivalent when $\DAG{M}{\lambda} = \DAG{M}{\mu}$. In this case, we say that $\lambda$ and $\mu$ are \textit{partition-equivalent} in $M$. For example, $\br{3,3,2}$ and $\br{4,3,3}$ are partition-equivalent in $\Knight$ because $\DAG{\knight}{\br{3,3,2}} = \DAG{\knight}{\br{4,3,3}}$; see \cref{fig:piclabelequiv}. On the other hand, $\br{3,3}$ and $\br{2,2,2}$ are not partition-equivalent in $\Knight$, even though the underlying DAGs are isomorphic, since $\DAG{\knight}{\br{3,3}} \neq \DAG{\knight}{\br{2,2,2}}$ because their vertex sets and edge labels are different; see \cref{fig:piclabelnotequiv}. 

\begin{observation}\label{obs:dagsubgraph}
    Let $M$ be a moveset and $\lambda,\beta\in \mathbb Y$. 
    Then $\lambda\leq\beta$ implies $\DAG{M}{\lambda}$ is an induced subgraph of $\DAG{M}{\beta}$.
\end{observation}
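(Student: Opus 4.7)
The plan is to reduce the claim to two elementary facts about subpartitions and then use a short induction on the recursive construction of the DAG. The first fact is a monotonicity property of well-definedness: if $\lambda\leq\beta$ and $\lambda[i,j]$ is well-defined, then $\beta[i,j]$ is also well-defined. This follows straight from the definition of the order (namely $\lambda\leq\beta$ means $\lambda$ has at most as many parts as $\beta$ and each part of $\lambda$ is bounded by the corresponding part of $\beta$) combined with the formula $\lambda[i,j]=\br{\lambda_{i+1}-j,\lambda_{i+2}-j,\ldots}$: any surviving positive entry of $\lambda[i,j]$ corresponds to an index where the analogous entry of $\beta[i,j]$ is at least as large, hence also positive.

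Next, I would prove by induction on the length of a shortest directed path from $(0,0)$ that every node of $\DAG{M}{\lambda}$ is a node of $\DAG{M}{\beta}$, and that every edge of $\DAG{M}{\lambda}$ (with its label) is also an edge of $\DAG{M}{\beta}$. The base case $(0,0)$ is immediate from the definition. For the inductive step, if $(i,j)$ is already known to be a node of $\DAG{M}{\beta}$ and $(y,x)\in M$ with $\lambda[i+y,j+x]$ well-defined, then by the monotonicity fact $\beta[i+y,j+x]$ is well-defined as well, so the recursive definition places $(i+y,j+x)$ in $\DAG{M}{\beta}$ together with the labeled edge $(i,j)\to(i+y,j+x)$.

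Finally, to upgrade "subgraph" to "induced subgraph," I need to check the converse direction for edges: if $(i,j)$ and $(i+y,j+x)$ are both nodes of $\DAG{M}{\lambda}$ and $(i,j)\to(i+y,j+x)$ is an edge of $\DAG{M}{\beta}$ with label $(y,x)$, then this edge already lies in $\DAG{M}{\lambda}$. This is essentially automatic from the edge-creation rule: the label forces $(y,x)\in M$, and the fact that $(i+y,j+x)$ is a node of $\DAG{M}{\lambda}$ forces $\lambda[i+y,j+x]$ to be well-defined (otherwise the recursive definition would never have introduced that node). Hence the two conditions required to produce an edge in $\DAG{M}{\lambda}$ are met.

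I do not anticipate a genuine obstacle here; the whole statement is a clean compatibility check between the partial order on $\Y$ and the recursive DAG construction. The only spot to be careful about is making sure the "induced" part of the conclusion is treated separately from plain containment, which is why I isolate the converse direction for edges as the final step rather than folding it into the induction.
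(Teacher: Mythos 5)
Your argument is correct, and it supplies exactly the verification the paper leaves implicit (the result is stated as an observation without proof): monotonicity of well-definedness of $\lambda[i,j]$ under $\lambda\leq\beta$, induction along the recursive construction for containment, and the separate converse check on edges for inducedness. No gaps; the edge-label point is handled correctly since a node is only ever created when the corresponding subpartition is well-defined.
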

The set of partitions that are partition-equivalent to $\lambda$ in $M$ has a minimum element (in the sense of Young's lattice). For example, $\br{3,3,2}$ is the minimum element for its equivalence class in $\Knight$, which is 
\[\{\lambda \in \Y^+ : \lambda[1,2] \neq \br{} \mbox{ and } \lambda[2,1] \neq \br{} \mbox{ and } \lambda[2,4] = \lambda[3,3] = \lambda[4,2] = \br{}\}.\] 

For a moveset $M$ and $\lambda \in \Y^+$ it may happen that there is $m=(y,x) \in M$ such that $\lambda[y,x]$ is not well-defined and therefore $m$ can never be used as a move in $(M, \lambda)$. It follows that $m$ does not appear as an edge label in $\DAG{M}{\lambda}$. Let $M|_{\lambda}$ be the set of edge labels in $\DAG{M}{\lambda}$. 

We now define an equivalence relation on the set of impartial chess games. We say that $(M_1, \lambda)$ and $(M_2, \mu)$ are \emph{game-equivalent} if there exists a bijection $f:M_1|_{\lambda} \to M_2|_{\mu}$ such that the following recursive procedure transforms $\DAG{M_1}{\lambda}$ into $\DAG{M_2}{\mu}$. 

\begin{enumerate}
    \item Mark $(0,0)$.
    \item For any edge from a marked vertex $(i, j)$ to an unmarked vertex $(i', j')$ with edge label $m$, replace $(i', j')$ with $(i, j) + f(m)$ and mark it. 
    \item Repeat Item 2 until all nodes are marked. 
    \item Replace each edge label $m$ with $f(m)$. 
\end{enumerate}

\begin{observation}
    If $(M_1, \lambda)$ is game-equivalent to $(M_2, \mu)$, then move $m$ is winning in $(M_1, \lambda)$ if and only if move $f(m)$ is winning in $(M_2, \mu)$. Also, \[\SG_{M_1}(\lambda)=\SG_{M_2}(\mu).\] 
\end{observation}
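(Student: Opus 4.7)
The plan is to show that game-equivalence induces a label-respecting isomorphism between $\DAG{M_1}{\lambda}$ and $\DAG{M_2}{\mu}$, and then invoke the fact that Sprague--Grundy values (and hence $\P/\N$ status) depend only on the underlying DAG structure.

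First I would verify that the recursive procedure in the definition is well-defined, i.e.\ that every vertex of $\DAG{M_1}{\lambda}$ receives exactly one relabeling regardless of which incoming edge is processed first. For each vertex $v = (i,j)$ of $\DAG{M_1}{\lambda}$, I would define $\varphi(v)$ to be the unique element of $\mathbb Z^{\geq 0} \times \mathbb Z^{\geq 0}$ obtained by fixing any directed path $(0,0) = v_0 \to v_1 \to \cdots \to v_k = v$ with edge labels $m_1, \dots, m_k$ and setting
\[
\varphi(v) = \sum_{t=1}^{k} f(m_t).
\]
Well-definedness reduces to showing that any two directed paths from $(0,0)$ to $v$ in $\DAG{M_1}{\lambda}$ give the same sum. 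This follows because along any path the sum of edge labels in $M_1|_{\lambda}$ equals $v - (0,0) = v$ (edge labels are precisely the moves, and each move adds its coordinates), so the corresponding sum of $f$-images is determined by a linear relation among the $m_t$'s; equivalently, the procedure's step 2 assigns $\varphi(v)$ as the unique value consistent with all incoming edges, which by the definition of game-equivalence yields $\DAG{M_2}{\mu}$.

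Next I would observe that $\varphi$ is a bijection from the vertex set of $\DAG{M_1}{\lambda}$ to that of $\DAG{M_2}{\mu}$ and that it maps each edge $v \to v'$ with label $m$ to the edge $\varphi(v) \to \varphi(v')$ with label $f(m)$; this is precisely what the transformation procedure produces, and the output equals $\DAG{M_2}{\mu}$ by hypothesis. Moreover, terminal vertices (those with no outgoing edges) correspond to terminal vertices, and $\varphi((0,0)) = (0,0)$.

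Finally, I would argue by induction on the longest path length $\lp$ from a vertex $v$ in $\DAG{M_1}{\lambda}$ that $\SG_{M_1}(\lambda_v) = \SG_{M_2}(\mu_{\varphi(v)})$, where $\lambda_v$ and $\mu_{\varphi(v)}$ denote the corresponding subpositions. The base case is $\lp(v) = 0$, where both are terminal and have $\SG$-value $0$. For the inductive step, the out-neighbors of $v$ biject with those of $\varphi(v)$ under $\varphi$, and by induction their $\SG$-values match, so the two mex computations agree. Specializing to $v = (0,0)$ gives $\SG_{M_1}(\lambda) = \SG_{M_2}(\mu)$. The move $m$ is winning in $(M_1,\lambda)$ iff it leads to a $\P$-position, i.e.\ a vertex of $\SG$-value $0$; by the above this happens iff $f(m)$ leads to a $\P$-position in $(M_2,\mu)$, proving the equivalence of winning moves.

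The main obstacle is the well-definedness of $\varphi$ in the presence of multiple paths to the same vertex; once it is established that the procedure produces a consistent vertex relabeling that realizes a DAG isomorphism preserving terminality, the Sprague--Grundy conclusion is a routine induction.
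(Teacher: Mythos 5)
Your proof is correct. The paper states this as a bare observation and offers no proof at all, so there is nothing to compare against; your argument --- that game-equivalence yields a label-respecting isomorphism $\varphi$ of $\DAG{M_1}{\lambda}$ onto $\DAG{M_2}{\mu}$ fixing the root and preserving terminality, followed by induction on $\lp$ to match Sprague--Grundy values vertex by vertex --- is exactly the routine verification the authors are implicitly relying on, and it correctly handles both claims (the $\SG$ equality and the correspondence of winning moves via moves to $0$-valued out-neighbors).

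One small caution: your justification of well-definedness via ``a linear relation among the $m_t$'s'' does not actually work in general, since $f$ is an arbitrary bijection of movesets and two paths to the same vertex may use different multisets of labels with the same coordinate sum, so their $f$-images need not sum to the same vector. But this does not damage the proof: the definition of game-equivalence \emph{hypothesizes} that the recursive relabeling procedure succeeds in transforming $\DAG{M_1}{\lambda}$ into $\DAG{M_2}{\mu}$, which already packages the consistency of the relabeling into the assumption. You acknowledge this fallback yourself at the end of that paragraph; it would be cleaner to lead with it and drop the linearity claim.
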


\begin{observation}[Conjugate invariance]\label{obs:conjugateinvariance}
An impartial chess game $(\lambda,M)$ is game-equivalent to $(\lambda',M^{-1})$.
\end{observation}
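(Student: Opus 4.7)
The plan is to exhibit the bijection $f \colon M|_\lambda \to M^{-1}|_{\lambda'}$ given by $f((y,x)) = (x,y)$ and verify that the recursive relabeling procedure from the definition of game-equivalence, driven by this $f$, carries $\DAG{M}{\lambda}$ onto $\DAG{M^{-1}}{\lambda'}$. The intuition is that conjugation reflects a Young diagram across its main diagonal, so that stripping the top $i$ rows and left $j$ columns becomes stripping the top $j$ rows and left $i$ columns after transposing.

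\textbf{Key identity.} First I would establish the standard fact that for every $\lambda \in \Y^+$ and non-negative integers $i,j$ with $\lambda[i,j]$ well-defined,
\[
(\lambda[i,j])' = \lambda'[j,i],
\]
and that $\lambda[i,j]$ is well-defined if and only if $\lambda'[j,i]$ is. This is immediate from the cell-set characterization of a Young diagram: the cells of $\lambda[i,j]$ are those cells $(a,b) \in \lambda$ with $a>i$ and $b>j$, and conjugating swaps the two coordinates, producing exactly the cells of $\lambda'[j,i]$.

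\textbf{Vertex and edge correspondence.} Using the identity I would show that the coordinate swap $\Phi \colon (i,j) \mapsto (j,i)$ is a bijection between the vertex sets of $\DAG{M}{\lambda}$ and $\DAG{M^{-1}}{\lambda'}$ that sends an edge of label $(y,x)$ to an edge of label $(x,y)$. Indeed, an edge $(i,j) \to (i+y, j+x)$ with label $(y,x)$ exists in $\DAG{M}{\lambda}$ iff $(y,x) \in M$ and $\lambda[i+y, j+x]$ is well-defined, which by the identity above is equivalent to $(x,y) \in M^{-1}$ and $\lambda'[j+x, i+y]$ being well-defined, i.e.\ the existence in $\DAG{M^{-1}}{\lambda'}$ of the edge $(j,i) \to (j+x, i+y) = \Phi(i+y, j+x)$ with label $(x,y) = f((y,x))$. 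In particular $f$ restricts to a genuine bijection $M|_\lambda \to M^{-1}|_{\lambda'}$.

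\textbf{Running the procedure.} Finally I would run the relabeling procedure and argue by induction on the distance from the root that at every stage the marked vertex in $\DAG{M}{\lambda}$ with label $(i,j)$ has been rewritten as $\Phi(i,j) = (j,i)$, and each edge label $m$ has been rewritten as $f(m)$. The base case is $(0,0) = \Phi(0,0)$. For the inductive step, when an edge of label $(y,x)$ takes a marked vertex $(i,j)$ to a newly discovered vertex, the procedure places the latter at $(i,j) + f((y,x)) = (j,i) + (x,y) = (j+x, i+y) = \Phi(i+y, j+x)$ by the inductive hypothesis, matching the corresponding vertex of $\DAG{M^{-1}}{\lambda'}$; the edge label is rewritten to $f((y,x)) = (x,y)$ by construction, again matching. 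The only real obstacle is bookkeeping: one must check that the procedure yields $\DAG{M^{-1}}{\lambda'}$ on the nose, with the correct vertex labels, rather than merely an isomorphic copy. The identity above makes this automatic, so the content of the observation reduces to the fact that conjugation commutes with subpartition extraction up to swapping the two indices.
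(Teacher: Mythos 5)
Your proposal is correct. The paper states this as an unproved observation, so there is no written argument to compare against; your route --- the identity $(\lambda[i,j])' = \lambda'[j,i]$ together with the coordinate swap $\Phi\colon (i,j)\mapsto (j,i)$ on vertices and $f\colon (y,x)\mapsto (x,y)$ on labels, verified against the relabeling procedure --- is exactly the intended justification, and you rightly flag the one point that needs care, namely that the procedure must produce $\DAG{M^{-1}}{\lambda'}$ with its actual vertex labels rather than merely an isomorphic DAG.
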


The following observation follows directly from \cref{obs:dagsubgraph}.
\begin{observation}\label{obs:height}
For any moveset $M$ and  $\lambda,\beta\in \Y^+$ such that $\lambda \leq \beta$, we have
$\lp(\DAG{M}{\lambda}) \leq \lp(\DAG{M}{\beta})$.
\end{observation}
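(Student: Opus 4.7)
The plan is to derive the observation as an immediate consequence of \cref{obs:dagsubgraph}. That earlier observation gives us the containment $\DAG{M}{\lambda} \subseteq \DAG{M}{\beta}$ as an induced subgraph, and the claim about $\lp$ is then purely a structural fact about directed paths in a subgraph of a DAG.

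Concretely, I would argue as follows. Apply \cref{obs:dagsubgraph} to obtain that $\DAG{M}{\lambda}$ is an induced subgraph of $\DAG{M}{\beta}$; in particular, every vertex and every edge of $\DAG{M}{\lambda}$ is also a vertex, respectively an edge, of $\DAG{M}{\beta}$. Now let $P = v_0 \to v_1 \to \cdots \to v_t$ be a longest directed path in $\DAG{M}{\lambda}$, so $t = \lp(\DAG{M}{\lambda})$. Each $v_s$ is a vertex of $\DAG{M}{\beta}$ and each edge $v_s \to v_{s+1}$ is an edge of $\DAG{M}{\beta}$, so $P$ is also a directed path in $\DAG{M}{\beta}$. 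Hence $\lp(\DAG{M}{\beta}) \geq t = \lp(\DAG{M}{\lambda})$, which is the desired inequality.

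There is essentially no obstacle here: the induced-subgraph containment is the substantive content, and it has already been established in \cref{obs:dagsubgraph}. The proof is a single-line lift of a path from a subgraph to its supergraph, so no case analysis or calculation is needed.
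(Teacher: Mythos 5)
Your proof is correct and matches the paper's approach exactly: the paper states that the observation follows directly from \cref{obs:dagsubgraph}, and your argument simply spells out that lifting a longest directed path from the induced subgraph $\DAG{M}{\lambda}$ into $\DAG{M}{\beta}$ gives the inequality. Nothing is missing.
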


\begin{proposition}\label{prop:SGdiam}
   For any $(M, \lambda)$ we have $\SG_M(\lambda)\leq \lp(\DAG{M}{\lambda})$.
\end{proposition}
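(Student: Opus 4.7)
The plan is to induct on $n := \lp(\DAG{M}{\lambda})$, exploiting the standard combinatorial game theory principle that the Grundy value of a position in an acyclic game is bounded by the depth of its game DAG. The base case $n = 0$ is immediate: then $(0,0)$ has no outgoing edges in $\DAG{M}{\lambda}$, so $\lambda$ admits no legal moves and $\SG_M(\lambda) = \Mex(\emptyset) = 0$, giving equality.

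For the inductive step, I would assume the inequality for all partitions whose DAG longest path is strictly less than $n$. Fix $\lambda$ with $\lp(\DAG{M}{\lambda}) = n \geq 1$, and let $\lambda[i,j]$ be an arbitrary successor (so $(i,j) \in M$ and $\lambda[i,j]$ is well-defined). The central step is to show that $\lp(\DAG{M}{\lambda[i,j]}) \leq n - 1$. Applying identity~\eqref{eq:chainsubpartition} iteratively, the shift $(a,b) \mapsto (a+i, b+j)$ gives a label-preserving graph isomorphism between $\DAG{M}{\lambda[i,j]}$ and the induced subgraph of $\DAG{M}{\lambda}$ on vertices reachable from $(i,j)$. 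Any directed path in this subgraph can be extended by prepending the edge $(0,0) \to (i,j)$, so the longest path in $\DAG{M}{\lambda[i,j]}$ is at most one shorter than the longest path in $\DAG{M}{\lambda}$, giving $\lp(\DAG{M}{\lambda[i,j]}) \leq n - 1$.

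By the inductive hypothesis, every successor satisfies $\SG_M(\lambda[i,j]) \leq n - 1$, so the set of successor Grundy values is contained in $\{0, 1, \ldots, n-1\}$. Its mex is therefore at most $n$, yielding $\SG_M(\lambda) \leq n$ as required. The argument is conceptually routine; the only step requiring mild care is the identification of $\DAG{M}{\lambda[i,j]}$ with the corresponding sub-DAG of $\DAG{M}{\lambda}$, but this follows immediately from identity~\eqref{eq:chainsubpartition} together with the recursive definition of the DAG.
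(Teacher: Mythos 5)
Your proof is correct and follows essentially the same induction on $\lp(\DAG{M}{\lambda})$ as the paper, which simply asserts that every successor $\bar\lambda$ satisfies $\lp(\DAG{M}{\bar\lambda}) < k$ and then applies the same mex bound. You merely spell out the sub-DAG identification via \eqref{eq:chainsubpartition} that the paper leaves implicit.
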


\begin{proof}
We proceed by induction on $k=\lp(\DAG{M}{\lambda})$.
If $k=0$ then the claim follows since $\SG_M(\lambda) = 0$, so assume $k>0$. Define $S=\left\{\bar\lambda \mid (M,\lambda)\to \left(M,\bar \lambda \right)\right\}$, and notice that for any $\bar\lambda\in S$ we have  $\lp\left(\DAG{M}{\bar\lambda}\right)<k$. 
By induction,
\[\SG_M(\lambda)=\Mex\left\{\SG_M\left(\bar\lambda\right) : \bar\lambda \in S\right\} \le \Mex\{0, \ldots, k-1\} = k. \qedhere \]
\end{proof}

    \subsection{\textsc{Pawn}, \textsc{Knight}, and \textsc{Downright}}
\label{sec:pawnknight}

In this section, we show that \textsc{Pawn} (with moveset $\Pawn = \{(0,1),(1,1)\}$, see \cref{def:chesspieces}) and \textsc{Knight} (with moveset $\Knight = \{(1,2),(2,1)\}$)  can both be reduced to \textsc{Downright} (with moveset $\Dr = \{(0,1),(1,0)\}$), which was studied in \cite{Gottlieb2022LCTR}. 
For all three games, a particular mirroring strategy may be used to obtain the following observation.

\begin{proposition}
For any $\lambda\in \Y$, the following are true, provided the respective subpartitions are well-defined:
\begin{align*}
\SG_\Dr(\lambda)&=\SG_\Dr(\lambda[1,1]), & 
\SG_\Pawn(\lambda)&=\SG_\Pawn(\lambda[2,1]), & 
\SG_\Knight(\lambda)&=\SG_\Knight(\lambda[3,3])
\end{align*}
\end{proposition}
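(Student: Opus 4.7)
The plan is to prove each of the three identities by a common mirroring argument on the disjunctive sum
\[
(G,\lambda) + (G,\lambda[a,b]),
\]
where $(a,b)$ equals $(1,1)$, $(2,1)$, or $(3,3)$ for $G$ equal to $\Dr$, $\Pawn$, or $\Knight$ respectively. By the Sprague--Grundy theorem, showing this sum is a $P$-position is equivalent to the stated identity $\SG_G(\lambda) = \SG_G(\lambda[a,b])$. The structural fact driving the argument is that in each game the moveset decomposes into a pair $m \leftrightarrow m^*$ summing componentwise to $(a,b)$: for $\Dr$, $(0,1) \leftrightarrow (1,0)$; for $\Pawn$, $(1,0) \leftrightarrow (1,1)$; for $\Knight$, $(1,2) \leftrightarrow (2,1)$.

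I proceed by strong induction on $|\lambda|$, assuming the identity for all smaller partitions whose $[a,b]$-subpartition is well-defined. For the inductive step I describe Player~2's winning reply in $(G,\lambda) + (G,\lambda[a,b])$. If Player~1 plays $m = (y,x) \in M$ on the second component (reaching $\lambda[a+y,b+x]$), Player~2 copies $m$ on the first component, producing $(G,\lambda[y,x]) + (G,\lambda[y,x][a,b])$ by identity~\eqref{eq:chainsubpartition}. This is a $P$-position by the inductive hypothesis applied to $\lambda[y,x]$: that partition is smaller than $\lambda$, and its $[a,b]$-subpartition equals $\lambda[a+y,b+x]$, which is well-defined because Player~1 just moved to it. If Player~1 plays $m$ on the first component, Player~2 replies with $m^*$ on the same component, yielding $(G,\lambda[a,b]) + (G,\lambda[a,b])$, a $P$-position as it is the sum of a game with itself.

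The main obstacle is verifying that Player~2's responses are always legal moves. For the copy in the second situation, $m$ is legal from $\lambda$ whenever it was legal from $\lambda[a,b]$, since $\lambda$ dominates $\lambda[a,b]$ componentwise. For the paired reply, $m^*$ is legal from $\lambda[m]$ precisely when $\lambda[m+m^*] = \lambda[a,b]$ is well-defined, which is the standing hypothesis. Base cases, in which no move is available to Player~1 on either component, are vacuously $P$-positions. The delicate point is that well-definedness of $\lambda[a,b]$ imposes size constraints on $\lambda$ (for instance, $\lambda[3,3]$ well-defined forces $\lambda_4 \ge 4$ in $\Knight$) that are exactly what is needed for the mirror to remain available in every subcase; each such check reduces to an elementary componentwise comparison of subpartitions.
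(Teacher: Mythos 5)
Your overall strategy is exactly the paper's: the paper's entire proof is the observation that $(\Dr,\lambda)\oplus(\Dr,\lambda[1,1])$, $(\Pawn,\lambda)\oplus(\Pawn,\lambda[2,1])$, and $(\Knight,\lambda)\oplus(\Knight,\lambda[3,3])$ are $\P$-positions, and your write-up supplies the mirroring strategy and the legality checks that the paper leaves implicit. For $\Dr$ and $\Knight$ your argument is correct as written: the two moves of each moveset sum to $(1,1)$ and $(3,3)$ respectively, the copy on the larger component is legal because $\lambda[y,x][a,b]=\lambda[a,b][y,x]$ is precisely the position the opponent just reached, and the paired reply lands on $\lambda[a,b]$, whose doubled sum is a $\P$-position.

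The gap is in the $\Pawn$ case. You pair $(1,0)\leftrightarrow(1,1)$, but $(1,0)$ is not a Pawn move: \cref{def:chesspieces} gives $\Pawn=\{(0,1),(1,1)\}$, and those two moves sum to $(1,2)$, not $(2,1)$. Running your own argument with the correct moveset proves $\SG_\Pawn(\lambda)=\SG_\Pawn(\lambda[1,2])$, and indeed the identity with $[2,1]$ is false for that moveset: for $\lambda=\br{4,4,4}$ one computes $\SG_\Pawn(\lambda)=1$ while $\lambda[2,1]=\br{3}$ has $\SG_\Pawn(\br{3})=0$, whereas $\lambda[1,2]=\br{2,2}$ has $\SG_\Pawn(\br{2,2})=1$, matching the corrected claim. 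So either the statement's $[2,1]$ is a typo for $[1,2]$ (equivalently, it refers to the conjugate moveset $\Pawn^{-1}=\{(1,0),(1,1)\}$), or your proof has silently swapped the moveset to make the pairing come out; either way, the asserted decomposition ``for $\Pawn$, $(1,0)\leftrightarrow(1,1)$'' does not hold for the game as the paper defines it, and that is the one step of your proof that fails. Everything else --- the induction on $|\lambda|$, the two response cases, and the legality verifications --- is sound.
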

\begin{proof}
    The nim-sum of two identical games is always in $\P$.
    Now notice that $(\Dr,\lambda) \oplus (\Dr,\lambda[1,1])$ and $(\Pawn,\lambda) \oplus (\Pawn,\lambda[2,1])$ and $(\Knight,\lambda) \oplus(\Knight,\lambda[3,3])$ are in $\P$, from which the result follows.
\end{proof}

\noindent In the remainder of \cref{sec:pawnknight} we provide a fuller treatment of $\Dr,\Pawn$ and $\Knight$.

\subsubsection*{ \textsc{Pawn} is solved by \textsc{Downright}}
\label{sec:pawn}

For  $\lambda=\br{\lambda_1,\dots,\lambda_r} \in \mathbb Y^+$ define transformation $\varphi_\Pawn(\lambda)=
\br{\lambda_1,\lambda_2-1,\dots,\lambda_r-r+1},$
omitting nonpositive entries. 
Notice that $(\Pawn,\lambda)$ is game-equivalent to $(\Dr,\varphi_\Pawn(\lambda))$ for every  $\lambda$. In particular, 
$\SG_\Pawn(\lambda)=\SG_\Dr(\varphi_\Pawn(\lambda))$. See \cref{fig:reduceknight}. 

Not every game of \textsc{Downright} is game-equivalent to some game of \textsc{Pawn}.
For example, there is no partition $\mu$ such that $(\Dr,\br{2,2})$ is game-equivalent to $(\Pawn, \mu)$. 
It is easy to verify that
$(\Dr,\lambda)$ is game-equivalent to some game of $\Pawn$ if and only if $\br{2}$ and $\br{1,1}$ are not subpartitions of $\lambda$.

\subsubsection*{ \textsc{Knight} is solved by \textsc{Downright}}

For  $\lambda=\br{\lambda_1,\dots,\lambda_r} \in \mathbb Y$ define transformation $\varphi_\Knight(\lambda)=\bar\lambda$  such that $\bar\lambda[i,j]$ is well-defined if and only if $\lambda[2i+j,2j+i]$ is. In other words, $\varphi_\Knight(\lambda) = (\bar \lambda_1, \bar \lambda_2, \ldots) \text{ where } \bar \lambda_j = \max\left( \{t \in [r] \, : \, 2t + j - 2 \leq \lambda_{t + 2j - 2}\} \cup \{0\} \right).$ Verify that $(\Knight,\lambda)$ is game-equivalent to $(\Dr,\varphi_\Knight(\lambda))$ for every $\lambda \in \Y$ and that 
$\SG_\Knight(\lambda)=\SG_\Dr(\varphi_\Knight(\lambda)).$ See \cref{fig:reduceknight}. 

Not every game of \textsc{Downright} is game-equivalent to a game of \textsc{Knight}. 
For example, there is no partition $\mu$ such that $(\Dr,\br{3})$ is game-equivalent to $(\Knight, \mu)$. 
It is easy to verify that
$(\Dr,\lambda)$ is game-equivalent to some game of $\Knight$ if and only if $\br{3}$ is not a subpartition of $\lambda$ and $\br{1,1,1}$ is not a subpartition of $\lambda$.

\begin{figure}[!h]
    \centering
    \knightmoves
    \caption[fragile]{\textsc{Knight} and \textsc{Pawn} on $\lambda=\br{14^2,13,12^4,9,5,3}$, shaded as \!\!\knightsquare and \!\!\pawnsquare\!\!, are game-equivalent to \textsc{Downright} on 
    $\varphi_\Knight(\lambda)=\br{6,5,4,2,1}$ and  $\varphi_{\Pawn}(\lambda)=\br{14,13,11,9,8,7,6,2}$, resp.}
    \label{fig:reduceknight}
\end{figure}
    
\subsection{\textsc{King}}
\label{sec:king}
In the video \cite{impchess2}, Berlekamp establishes the following result. We give a written proof here for completeness.
\begin{theorem}[Berlekamp]\label{thm:SGKingrectangle} Let $r,c$ be positive integers. Then
\[\SG_\K\left( \frectangle{r}{c} \right) = \begin{cases}
0 & \text{if $c$ and $r$ are odd}  \\
2 & \text{if $c$ and $r$ are even} \\
1 & \text{if $c+r$ is odd and  $\min(c,r)$ is odd} \\
3 & \text{if $c+r$ is odd and  $\min(c,r)$ is even}
\end{cases}
\]
\end{theorem}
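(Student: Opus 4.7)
The plan is to proceed by strong induction on $r+c$, using conjugate invariance (\cref{obs:conjugateinvariance}) and the fact that $\K^{-1} = \K$ to obtain the symmetry $\SG_\K(\frectangle{r}{c}) = \SG_\K(\frectangle{c}{r})$. Since the formula itself is symmetric in $r$ and $c$, it suffices to verify the recursion.

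The base case $r = c = 1$ gives $\frectangle{1}{1} = \br{1}$, a terminal position, so $\SG_\K = 0$, matching the ``both odd'' branch.

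For the inductive step, I would observe that from $\frectangle{r}{c}$ the valid King moves lead to $\frectangle{r-1}{c}$ (when $r \ge 2$), $\frectangle{r}{c-1}$ (when $c \ge 2$), and $\frectangle{r-1}{c-1}$ (when $r,c \ge 2$). I would then split into the four parity cases for $(r \bmod 2, c \bmod 2)$, compute the $\SG$-values of the existing successors using the induction hypothesis, and take the $\Mex$. For instance, with $r < c$ both odd and $r \ge 3$, the three successors have parity patterns (even, odd), (odd, even), and (even, even), yielding predicted values $3$, $1$, and $2$, so $\Mex\{3,1,2\} = 0$, as required. The analogous computations for the other three parity classes, together with the degenerate subcases $r = 1$ and $r = c$, each reduce to a similar small $\Mex$ calculation.

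The main obstacle is disciplined case analysis at the boundaries. When $r = 1$ only the $(0,1)$ move is valid, so the $\Mex$ is taken over a single successor and must still give the correct value for both parities of $c$. When $r = c$ the $(0,1)$ move produces a rectangle in which the smaller coordinate is now $c - 1$ rather than $r$, and the corresponding successor's branch of the formula must be recomputed accordingly; for instance, in the all-odd case above with $r = c$, the successor $\frectangle{r}{c-1}$ has $\min = c - 1$ even, so its predicted value becomes $3$ instead of $1$, but $\Mex\{3, 3, 2\}$ is still $0$. Finally, in the mixed-parity cases one must track how $\min(r,c)$ changes under each move to select the right branch. With these edge cases identified, each resulting subcase is a short mex verification.
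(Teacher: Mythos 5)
Your proposal is correct and follows essentially the same route as the paper's proof: induction on $r+c$ with a base case for one-row (and one-column) rectangles, followed by a parity case analysis of the three successors $\frectangle{r-1}{c}$, $\frectangle{r}{c-1}$, $\frectangle{r-1}{c-1}$ and a short $\Mex$ computation in each case. The paper streamlines the boundary bookkeeping slightly by only needing that the two ``side'' successors lie in $\{1,3\}$ (resp.\ equal $\{0,2\}$) rather than pinning down which is which, but the substance is identical.
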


\begin{proof}[Proof of \cref{thm:SGKingrectangle}]
We proceed by induction on $r+c$. 
For the base case, suppose $r=1$ or $c=1$. 
The case $r=1=c$ is trivial.
If $r=1$ and $c>1$, then there is only one permissible move, i.e. $(0,1)$. Thus the $\SG_{\K}$ values must alternate between $0$, when $c$ is odd, and $1$, when $c$ is even, as stated.
The case when $r>1$ and $c=1$ is similar.

Consider a position $\lambda=\frectangle{r}{c}$ where $r>1$ and $c > 1$. Such a position has three available moves $\lambda[1,0]=\frectangle{r-1}{c}$ and $\lambda[1,1]=\frectangle{r}{c-1}$ and $\lambda[0,1]=\frectangle{r-1}{c-1}$. 
We consider the cases as in the statement of the claim.
\begin{description} 
    \item[] If $c$ and $r$ are odd:
    \begin{description}
    \item By induction $\SG_{\K}(\lambda[1,1])=2$ so
    $\left\{ \SG_{\K}(\lambda[1,0]),\SG_{\K}(\lambda[0,1]) \right\} \subseteq \{1,3\}$.
    \item Thus 
    $\SG_{\K}(\lambda)=\Mex\left( \SG_{\K}(\lambda[0,1]),\SG_{\K}( \lambda[1,1]),\SG_{\K}(\lambda[1,0])\right)=0.$
    \end{description}    
    
    \item[]  If $c$ and $r$ are even:
    \begin{description}
        \item By induction $\SG_{\K}(\lambda[1,1])=0$. Then $\min(c-1,r)$ is odd or 
        \item $\min(c,r-1)$ is odd so $1 \in \left\{ \SG_{\K}(\lambda[1,0]),\SG_{\K}(\lambda[0,1]) \right\} \subseteq \{1,3\}.$
        \item Thus
    $ \SG_{\K}(\lambda)=\Mex\left( \SG_{\K}(\lambda[0,1] ),\SG_{\K}(\lambda[1,1]),\SG_{\K}(\lambda[1,0]) \right)=2.$
    \end{description}

    \item[] If $c+r$ is odd and  $\min(c,r)$ is odd:
    \begin{description}
        \item[] 
    We have $\SG_{\K}(\lambda[1,1])=3$ and
    $\{\SG_{\K}(\lambda[1,0]),\SG_{\K}(\lambda[0,1])\} = \{0,2\}$. 
    \item Thus 
    $\SG_{\K}(\lambda)=\Mex(\SG_{\K}(\lambda[0,1]),\SG_{\K}(\lambda[1,1]),\SG_{\K}(\lambda[1,0]))=1.$
    \end{description}
    
    \item[] If $c+r$ is odd and  $\min(c,r)$ is even:
    \begin{description}
        \item We have $\SG_{\K}(\lambda[1,1])=1$ and
    $\{\SG_{\K}(\lambda[1,0]),\SG_{\K}(\lambda[0,1])\} = \{0,2\}$.
    \item So 
    $\SG_{\K}(\lambda)=\Mex(\SG_{\K}(\lambda[0,1]),\SG_{\K}(\lambda[1,1]),\SG_{\K}(\lambda[1,0]))=3.\qedhere$ 
    \end{description}
\end{description}
\end{proof}
\noindent We continue with sufficient criteria for certain partitions to be winning.

\begin{proposition}\label{prop:pnrcevenstronger}
Let $\lambda$ be a nonempty partition with all parts
\begin{enumerate}
    \item appearing an even number of times. Then $\lambda[i,j] \in \N_{\K}$ for all even $i$.
    \item being even. Then $\lambda[i,j] \in \N_{\K}$ for even $j$.
\end{enumerate}
\end{proposition}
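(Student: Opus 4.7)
The plan is to prove both parts via a single auxiliary lemma together with a structural reduction and conjugation. The key observation is that the hypotheses of the two parts are preserved under passage to subpartitions of the required form: if every part of $\lambda$ appears an even number of times, then removing an even number $i$ of top rows preserves the pairing of rows of equal length, and subtracting a common $j$ from every surviving part preserves multiplicities, so $\lambda[i,j]$ again has every part appearing an even number of times; likewise, if every part of $\lambda$ is even and $j$ is even, then every part of $\lambda[i,j]$ is also even. Hence Part~1 reduces to showing that any nonempty partition with all parts of even multiplicity is in $\N$, and Part~2 reduces to the same claim for partitions with all parts even.

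I would then collapse the two reductions via conjugation. Since $\K=\K^{-1}$, \cref{obs:conjugateinvariance} gives that $(\K,\lambda)$ is game-equivalent to $(\K,\lambda')$, and the identity $(\lambda[i,j])'=\lambda'[j,i]$ yields $\lambda[i,j]\in\N$ iff $\lambda'[j,i]\in\N$. If every part of $\lambda$ has even multiplicity, then pairing consecutive equal rows forces every column height $\lambda'_j$ to be even, so $\lambda'$ has all even parts. Thus Part~1 follows from Part~2 applied to $\lambda'$, and it suffices to establish Part~2.

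To establish Part~2 I would prove, by simultaneous strong induction on $|\mu|$, the following three claims for every nonempty $\mu$ with $r$ rows:
\begin{enumerate}
\item[(i)] if all parts of $\mu$ are odd and $r$ is odd, then $\mu\in\P$;
\item[(ii)] if all parts of $\mu$ are odd and $r$ is even, then $\mu\in\N$;
\item[(iii)] if all parts of $\mu$ are even, then $\mu\in\N$.
\end{enumerate}
The base case is $\mu=\br{1}$, which is terminal and satisfies (i). For (iii), I would move by $(0,1)$ when $r$ is odd and by $(1,1)$ when $r$ is even; in both cases the image has all odd parts and an odd number of rows, which lies in $\P$ by the inductive (i). For (ii), the move $(1,0)$ gives a partition with all odd parts and $r-1$ (odd) rows, hence in $\P$ by inductive (i). For (i), I would check that every valid move takes $\mu$ to a position falling under (ii) or (iii): $(1,0)$ gives all odd parts with $r-1$ (even) rows; while $(0,1)$ and $(1,1)$ each subtract $1$ from every remaining part, yielding all-even parts after dropping any that became $0$.

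The main obstacle is the bookkeeping in the inductive step for (i): I must check the validity of each move in degenerate cases--for instance $\mu=\br{1^r}$ (only $(1,0)$ is valid), $r=1$ (only $(0,1)$ is valid), and $\mu_2=1$ (where $(1,1)$ yields only trimmed zeros and is invalid)--and confirm that every remaining valid move still lands in one of the cases (ii) or (iii) covered by the induction hypothesis, with careful attention to trailing-zero trimming when $(0,1)$ or $(1,1)$ reduces parts of value $1$ to $0$. Once (i)--(iii) are established, Part~2 follows immediately since $\lambda[i,j]$ has all even parts whenever $j$ is even, and Part~1 then follows via the conjugation reduction.
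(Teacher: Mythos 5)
Your proof is correct, but it takes a genuinely different route from the paper. The paper proves Part~1 directly by induction on the number of parts: the base case is $\br{\lambda_1^2}=\frectangle{2}{\lambda_1}$, handled by \cref{thm:SGKingrectangle}, and in the inductive step every row of even depth $\geq 2$ is already known to be all $\N_\K$, so no $(1,0)$- or $(1,1)$-move from row $1$ reaches a $\P_\K$-position; hence the top two rows replicate the $\P_\K/\N_\K$ pattern of the $2\times\lambda_1$ rectangle, and the rectangle theorem finishes. Part~2 is then obtained by conjugation. You run the reduction in the opposite direction (Part~1 to Part~2 via $(\lambda[i,j])'=\lambda'[j,i]$ and the observation that even multiplicities conjugate to even parts), and you replace the appeal to the rectangle formula by a standalone parity lemma: a partition with all parts odd is in $\P_\K$ iff it has an odd number of rows, and a partition with all parts even is in $\N_\K$, proved by simultaneous induction on $|\mu|$ with explicit moves. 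Both your closure claims (even multiplicities are preserved when an even number of rows is deleted and a constant is subtracted, since equal rows leave in whole groups; even parts are preserved when $j$ is even) and your case analysis of degenerate moves check out. The paper's argument is shorter because \cref{thm:SGKingrectangle} is already in hand; yours is self-contained (the only base case is the terminal $\br{1}$), produces explicit winning moves, and yields strictly more information as a byproduct, namely a complete $\P_\K/\N_\K$ classification of partitions with all parts odd.
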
 

\begin{proof}
    We prove only the first case, as the second follows by the conjugate invariance (see \cref{obs:conjugateinvariance}) of $\K$. In this case we write $\lambda=\br{\lambda_1^{m_1},\ldots,\lambda_k^{m_k}}$ where $m_t$ is even for all $t\in[k]$.
    We proceed by induction on the number of parts $m_1 + \cdots + m_k$. The base case $\lambda=\br{\lambda_1^2}$ is a special case of \cref{thm:SGKingrectangle}.
    Assume the statement holds for all such partitions with fewer parts. For $(\K,\lambda[i,j])$ where $i$ is even and at least two, the statement holds from the induction hypothesis. 
    Thus no $P_{\K}$-position is reachable from $(\K,\lambda[1,j])$ via $(1,1)$- or $(1,0)$-move.
    It follows that $\lambda[i,j]\in P_{\K} \iff \frectangle2{\lambda_1}[i,j]\in P_{\K}$ for all $i\in [0,1]$ and $j\in[0,\lambda_1-1]$. Thus, the result follows from \cref{thm:SGKingrectangle}.
\end{proof}

\begin{corollary}\label{cor:allevenallduplicated}
Let $\lambda=\br{\lambda_1,\ldots,\lambda_r}$ where all parts appear an even number of times. Then $\lambda[i,j] \in \P_{\K}$ if and only if both $i$ and $\lambda_{i+1}-j$ are odd. 
\end{corollary}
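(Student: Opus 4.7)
The plan is to split on the parity of $i$. If $i$ is even, then \cref{prop:pnrcevenstronger}(1) already places $\lambda[i,j]$ in $\N_{\K}$, so it is not a $\P$-position, and the biconditional holds vacuously on that side. For the substantive case $i$ odd, I will perform a reverse induction on $j$ (equivalently, induction on the top-row length $\lambda_{i+1}-j$ of $\lambda[i,j]$), leveraging \cref{prop:pnrcevenstronger}(1) to kill off two of the three moves.

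The central observation is that, for $i$ odd, every legal move $(1,0)$ or $(1,1)$ from $\lambda[i,j]$ produces a subpartition of the form $\lambda[i+1,\,\cdot\,]$ with $i+1$ even; by \cref{prop:pnrcevenstronger}(1) all such positions are in $\N_{\K}$. Consequently the $\P/\N$-status of $\lambda[i,j]$ is controlled entirely by its $(0,1)$-child $\lambda[i,j+1]$ whenever that move is legal: $\lambda[i,j]\in \P_{\K}$ iff all its children lie in $\N_{\K}$, which reduces to $\lambda[i,j+1]\notin \P_{\K}$.

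For the base case of the reverse induction I take $j=\lambda_{i+1}-1$, where $\lambda_{i+1}-j=1$ is odd and the $(0,1)$-move is invalid (since $\lambda[i,\lambda_{i+1}]$ is empty). The remaining legal children, if any, come from $(1,0)$ and $(1,1)$, and all sit in $\N_{\K}$ by \cref{prop:pnrcevenstronger}(1); thus $\lambda[i,\lambda_{i+1}-1]\in\P_{\K}$, consistent with the claim. For the inductive step with $0\le j<\lambda_{i+1}-1$, the subpartition $\lambda[i,j+1]$ has top-row length $\lambda_{i+1}-j-1$, so by the inductive hypothesis $\lambda[i,j+1]\in\P_{\K}$ iff $\lambda_{i+1}-j-1$ is odd, iff $\lambda_{i+1}-j$ is even. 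Combining with the reduction above, $\lambda[i,j]\in\P_{\K}$ iff $\lambda_{i+1}-j$ is odd, completing the induction.

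The only delicate point is the handling of the boundary subcases in which one or both of the $(1,0),(1,1)$-moves are invalid (for instance when $\lambda_{i+2}\le j$ or when $i+1=r$); but since these moves, whenever legal, land in $\N_{\K}$, their absence simply removes $\N$-children and does not affect whether $\lambda[i,j]$ is a $\P$-position. Thus no separate argument is needed for those edge cases, and the full equivalence follows.
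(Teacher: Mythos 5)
Your proof is correct and follows essentially the same route as the paper's: invoke \cref{prop:pnrcevenstronger} to rule out $(1,0)$- and $(1,1)$-moves reaching a $\P_{\K}$-position from an odd row, then observe that the $\P/\N$-status alternates along the row starting from $j=\lambda_{i+1}-1$ as $\P_{\K}$. Your write-up merely makes explicit the reverse induction on $j$ that the paper compresses into the word ``alternates.''
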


\begin{proof}
By \cref{prop:pnrcevenstronger} we know  $\lambda[2k,j] \in \N_{\K}$. For $i$ odd, there is neither a $(1, 0)$-move nor a $(1,1)$-move from $\lambda[i,j]$ to a $P_{\K}$-position. Therefore $\lambda[i,j]$ alternates between $\P_{\K}$ and $\N_{\K}$ positions, starting with $j=\lambda_{i+1}-1$ as $\P_{\K}$. 
\end{proof}

We use \cref{prop:pnrcevenstronger} to derive $\P_\K/\N_\K$-positions of generalized staircases.

\begin{corollary}
If $rc$ is even then $\gs rck \in \N_{\K}$ for any $k$.
\end{corollary}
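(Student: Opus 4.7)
The plan is to obtain this corollary as a direct application of \cref{prop:pnrcevenstronger} by handling the two cases under which $rc$ is even: either $r$ is even, or $c$ is even.

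First, suppose $r$ is even. By definition, $\gs rck = \br{(kc)^r, ((k-1)c)^r, \ldots, c^r}$, so each distinct part value $tc$ (for $t \in [k]$) appears exactly $r$ times, and $r$ is even. Thus every part of $\gs rck$ appears an even number of times, so part (1) of \cref{prop:pnrcevenstronger} applies. Taking $i = j = 0$ (and noting that $0$ is even), we conclude $\gs rck = \gs rck[0,0] \in \N_{\K}$.

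Next, suppose $c$ is even. Then every part of $\gs rck$ is a multiple of $c$, hence even. So part (2) of \cref{prop:pnrcevenstronger} applies, and again with $i = j = 0$ we obtain $\gs rck \in \N_{\K}$.

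Since $rc$ even forces at least one of $r, c$ to be even, one of the two cases above always applies, and the result follows. There is no real obstacle here: the work was done in \cref{prop:pnrcevenstronger}, and the only observation required is that the structural form of $\gs rck$ makes each hypothesis automatically satisfied in the respective case.
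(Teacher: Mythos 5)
Your proof is correct and matches the paper's intended argument exactly: the corollary is stated as a direct consequence of \cref{prop:pnrcevenstronger}, splitting on whether $r$ or $c$ is even and noting that $\gs rck$ then has all parts repeated an even number of times or all parts even, respectively. No issues.
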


Combining \cref{obs:row_length} and \cref{cor:allevenallduplicated} yields the following. 
\begin{corollary}\label{cor:PNoneeven}
    Let $r$ be even and $c, k$ be arbitrary. Then 
    $\gs rck[i,j] \in \P_\K$ if and only if $i\left(c \left( k-\left \lfloor \frac{i}{r} \right \rfloor \right )-j \right)$ is odd. 
\end{corollary}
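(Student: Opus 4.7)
The plan is to derive the statement by combining two earlier results: the explicit description of row lengths in generalized staircases given in \cref{obs:row_length}, and the characterization of $\P_\K$-positions in partitions whose parts all appear with even multiplicity provided by \cref{cor:allevenallduplicated}. The key point enabling this combination is that the even-multiplicity hypothesis is satisfied for every generalized staircase with even $r$.

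First, I would verify that \cref{cor:allevenallduplicated} applies. By \cref{def:partition-families}, the partition $\lambda = \gs{r}{c}{k}$ equals $\br{(kc)^r, ((k-1)c)^r, \dots, c^r}$, so every distinct part appears with multiplicity exactly $r$. Since $r$ is assumed to be even, every part of $\lambda$ appears an even number of times, and \cref{cor:allevenallduplicated} yields that $\lambda[i,j] \in \P_\K$ if and only if both $i$ and $\lambda_{i+1}-j$ are odd.

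Next, I would rewrite $\lambda_{i+1}-j$ in terms of $i$, $j$, $r$, $c$, and $k$ using \cref{obs:row_length}. Replacing the index $i$ by $i+1$ in that formula gives $\lambda_{i+1} = c\bigl(k - \lfloor i/r \rfloor\bigr)$, valid whenever the subpartition $\lambda[i,j]$ is well-defined. Substituting this into the condition from the previous step, the criterion becomes: $\lambda[i,j] \in \P_\K$ if and only if $i$ and $c\bigl(k - \lfloor i/r \rfloor\bigr) - j$ are both odd. Finally, since a product of two integers is odd precisely when each factor is odd, this condition is equivalent to the oddness of the product $i\bigl(c(k - \lfloor i/r \rfloor) - j\bigr)$, which matches the statement.

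There is really no hard step; the proof is a bookkeeping argument that chains the two earlier results together. The only subtlety worth double-checking is the index shift in applying \cref{obs:row_length} (using $\lambda_{i+1}$ rather than $\lambda_i$ to match the cell-level indexing convention used throughout \cref{sec:king}), and ensuring that the range of $i$ and $j$ for which $\gs{r}{c}{k}[i,j]$ is well-defined is precisely the range in which the formula from \cref{obs:row_length} applies.
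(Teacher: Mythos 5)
Your proof is correct and follows exactly the route the paper intends: the paper states this corollary with the single remark that it follows by ``combining \cref{obs:row_length} and \cref{cor:allevenallduplicated},'' and your write-up simply makes that combination explicit (even multiplicity of parts since each distinct part of $\gs rck$ occurs exactly $r$ times, the index shift $\lambda_{i+1}=c\left(k-\left\lfloor i/r\right\rfloor\right)$, and the equivalence of ``both factors odd'' with ``product odd''). Nothing further is needed.
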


We now turn our attention to the Sprague-Grundy values of \textsc{King}. 

\begin{lemma} Let $k \in \mathbb Z^{\geq0}$. Then $\SG_{\K}(\fstaircase{k})= (k-1) \bmod 3.$
\end{lemma}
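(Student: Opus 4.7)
The plan is to proceed by induction on $k$, with base cases $k \in \{1,2,3\}$. The key observation that makes the induction close is that every legal King-move from a staircase lands again on a staircase. Explicitly, working directly from the definition of $\lambda[i,j]$ in \cref{sec:partitions}, for $k \geq 3$ one has
\[
\fstaircase{k}[0,1] = \fstaircase{k}[1,0] = \fstaircase{k-1}, \qquad \fstaircase{k}[1,1] = \fstaircase{k-2},
\]
where the last equality holds after discarding the single trailing zero from $\br{k-2,k-3,\ldots,1,0}$. Thus from $\fstaircase{k}$ only two distinct successors are reachable: $\fstaircase{k-1}$ and $\fstaircase{k-2}$.

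For the base cases I would verify directly that $\fstaircase{1} = \br{1}$ is terminal, so $\SG_\K(\fstaircase{1}) = 0$; that $\fstaircase{2} = \br{2,1}$ admits only the moves $(0,1)$ and $(1,0)$, both leading to $\br{1}$ (the $(1,1)$-move being illegal since it would reach the empty partition), giving $\SG_\K(\fstaircase{2}) = \Mex\{0\} = 1$; and that $\fstaircase{3}$ has successors $\fstaircase{2}$ and $\fstaircase{1}$, so $\SG_\K(\fstaircase{3}) = \Mex\{1,0\} = 2$. These match $(k-1)\bmod 3$.

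For the inductive step, assume $k \geq 4$ and that the claim holds for $k-1$ and $k-2$. Combining the hypothesis with the move analysis gives
\[
\SG_\K(\fstaircase{k}) = \Mex\bigl\{(k-2)\bmod 3,\; (k-3)\bmod 3\bigr\}.
\]
Since $(k-3)\bmod 3$ and $(k-2)\bmod 3$ are two distinct consecutive residues modulo $3$, their $\Mex$ is the remaining third residue, namely $(k-1)\bmod 3$, completing the induction. There is no real obstacle in this argument; the only point requiring care is verifying the subpartition identities above, in particular the trailing-zero removal, so that the three King-moves really do collapse to the pair $\{\fstaircase{k-1},\fstaircase{k-2}\}$ rather than producing extra successors or illegal positions.
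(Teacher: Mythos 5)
Your proof is correct and follows essentially the same route as the paper's: induction on $k$ using the fact that the only successors of $\fstaircase{k}$ are $\fstaircase{k-1}$ and $\fstaircase{k-2}$, whose Grundy values are two distinct consecutive residues modulo $3$, so the $\Mex$ is the third residue $(k-1)\bmod 3$. The extra base case $k=3$ and the explicit verification of the subpartition identities are harmless additions; the paper folds $k=3$ into the inductive step.
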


\begin{proof}
We proceed by induction on $k$, with base cases for $k=1$ and $k=2$.
$\fstaircase{1}$ is terminal so $\SG_{\K}(\fstaircase{1})=0$. The positions reachable from  $\fstaircase{2}$ both have $\SG$-value 0, so  $\SG_{\K}(\fstaircase{2})=1$. If $k > 2$, then the reachable positions from $\fstaircase{k}$ are  $\fstaircase{k-1}$ and  $\fstaircase{k-2}$. By induction hypothesis, those positions have $\SG$-values $(k-2) \bmod 3$ and $(k-3) \bmod 3$. Thus $\SG_{\K}(\fstaircase{k}) = \Mex(\{(k-2) \bmod 3, (k-3) \bmod 3\}) = (k-1) \bmod 3$ as desired. 
\end{proof}

\begin{lemma}
Let $r$ be even. Then $\SG_\K(\fgenstaircase rrk)=\begin{cases}
2 & \text{if  $k =1$,} \\
1 & \text{otherwise.}
\end{cases}$
\end{lemma}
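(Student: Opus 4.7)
The plan is to induct strongly on $k$, writing $\lambda=\gs{r}{r}{k}$. The base case $k=1$ is immediate: $\gs{r}{r}{1}=\frectangle{r}{r}$ is an even-by-even rectangle, so \cref{thm:SGKingrectangle} gives $\SG_\K(\lambda)=2$.

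For $k\ge 2$, I would first apply \cref{cor:PNoneeven} with $c=r$: since $r$ is even, the parity of $i(r(k-\floor{i/r})-j)$ matches that of $ij$, so $\lambda[i,j]\in\P_\K$ if and only if both $i$ and $j$ are odd. In particular $\SG_\K(\lambda[1,1])=0$, and this already forces $\SG_\K(\lambda)\ge 1$. To upgrade to equality I need $\SG_\K(\lambda[1,0])\ne 1$ and $\SG_\K(\lambda[0,1])\ne 1$. Since $\gs{r}{r}{k}$ is self-conjugate (a direct count shows its row-length sequence equals its column-height sequence), \cref{obs:conjugateinvariance} gives $\SG_\K(\lambda[1,0])=\SG_\K(\lambda[0,1])$, so it is enough to handle $\lambda[1,0]$. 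All parts of $\lambda[1,0]$ are multiples of the even number $r$, so the second item of \cref{prop:pnrcevenstronger} gives $\SG_\K(\lambda[1,0])\ge 1$; I would then push this to $\ge 2$ by exhibiting some successor of $\lambda[1,0]$ with SG-value exactly $1$.

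The successors of $\lambda[1,0]$ besides $\lambda[1,1]$ are $\lambda[2,0]$ and $\lambda[2,1]$, and I would split on $(r,k)$. When $r=k=2$, $\lambda[2,1]=\br{1,1}=\frectangle{2}{1}$ has SG-value $1$ by \cref{thm:SGKingrectangle}. When $r=2$ and $k\ge 3$, $\lambda[2,0]=\gs{2}{2}{k-1}$ has SG-value $1$ by the inductive hypothesis. For $r\ge 4$, $\lambda[2,0]=\br{(rk)^{r-2},((k-1)r)^r,\ldots,r^r}$ is not a generalized staircase, but all its parts appear an even number of times; I would cover this branch by strengthening the induction to assert $\SG_\K=1$ for every partition of the form $\br{(rk)^a,((k-1)r)^r,\ldots,r^r}$ with $r$ even, $k\ge 2$, and even $a\in[2,r]$, using \cref{cor:allevenallduplicated} to locate $\P$-positions and then mirroring the $\Mex$ calculation above.

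The hard part is the $r\ge 4$ branch: the strengthened induction on $a$ must pass through intermediate partitions with $a$ odd, which are $\N_\K$-positions but whose SG-values are not controlled directly by any earlier result in the paper. Arranging the case split so that at each even $a$ the $\Mex$ of the three successor SG-values still leaves $1$ as the missing value is the main technical burden of the proof.
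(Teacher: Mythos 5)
Your base case and the $r=2$ branch are fine, and the overall shape of your strategy (induction on $k$ with a strengthened hypothesis, plus self-conjugacy of $\gs rrk$ to reduce $\lambda[0,1]$ to $\lambda[1,0]$) is sound. But the $r\ge 4$ branch, which you yourself flag as ``the main technical burden,'' is a genuine gap, and the strengthening you propose cannot close it as stated. You only add control over the partitions $\br{(rk)^a,((k-1)r)^r,\ldots,r^r}$, i.e.\ over the subpositions $\lambda[i,0]$ in the first column. However, the $\Mex$ computation at $\lambda[i,0]$ involves the successors $\lambda[i,1]$ and $\lambda[i+1,1]$, which live in the second column; ruling out the value $1$ there in turn requires information about the third column, and so on. A one-dimensional strengthening along $j=0$ therefore does not induct on its own: you are forced into a two-dimensional claim about the entire top-left $r\times r$ block.

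The paper's proof makes exactly that two-dimensional claim. It strengthens the induction on $k$ to assert that $\SG_\K(\lambda[0,j])$ alternates between $1$ and an element of $\{2,3\}$ for $0\le j\le r-1$ (and likewise down the first column, by conjugacy). Applying this hypothesis to $\gs rr{k-1}$, i.e.\ to row $r$ and column $r$ of $\gs rrk$, it propagates inward two rows and columns at a time: the $\P$-position characterization from \cref{cor:PNoneeven} (which you also invoke) pins the odd-odd cells at $0$, and then each even-indexed cell on the current boundary sees a $0$-, a $1$-, and a $\{2,3\}$-position among its successors, hence lands in $\{2,3\}$; iterating $r/2$ times yields $\SG_\K(\lambda[i,j])=0$ for $i,j$ both odd, $=1$ for $i,j$ both even, and $\in\{2,3\}$ otherwise, which gives the lemma at $(0,0)$ and simultaneously re-establishes the strengthened hypothesis for the next value of $k$. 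The observation you are missing is that the $2$-versus-$3$ ambiguity at the mixed-parity cells (your odd-$a$ positions) never needs to be resolved --- knowing those values lie in $\{2,3\}$ is both sufficient for every $\Mex$ computation and weak enough to propagate cleanly.
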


\begin{proof}
Let $\lambda=\fgenstaircase rrk$.
We use induction on $k$ to prove something stronger. 
In addition the the statement of the lemma we also prove that the sequence $\SG_\K(\lambda[0,j])$, for $0\leq j\leq r-1$, alternates between $1$ and some value in $\{2,3\}$.
By conjugate invariance, the same holds for $\SG_\K(\lambda[j,0])$.

If $k = 1$, then by \cref{thm:SGKingrectangle}, we know that $\SG_\K(\fgenstaircase rr1)=2$ and that $\SG$-values along the top row and left column alternate between 2 and 1. 

Now consider the case $k > 1$. We proceed by iteration. 
\begin{enumerate}[label=$(S_\arabic*)$]
    \item \label{en:s1}By induction hypothesis we have that $\lambda[r,j]$ alternates between $1$ and $\{2,3\}$ for $0 \leq j \leq r-1$ and so does $\lambda[i,r]$ for $0 \leq i \leq r-1$.

    \item For odd $i$ with $0\leq i\leq r-1$, we have that $\lambda[i,r-1]$ and $\lambda[r-1,i]$ are 0-positions. \cref{cor:PNoneeven} implies that $\lambda[i,j]\in \P_\K$  precisely when $i$ and $j$ are both odd, so $\SG_\K(\lambda[i,j]) = 0$ precisely when $i$ and $j$ are odd. 
    \label{en:s2}
    \item For even $i$ with $0\leq i\leq r-1$, we have that $\lambda[i,r-1]$ and $\lambda[r-1,i]$ are $\{2,3\}$-positions, because each of them can move to 
    a $1$-position, a $\{2,3\}$-position, and a $0$-position, due to  \ref{en:s1} and \ref{en:s2}, respectively. 
    \label{en:s3}
\end{enumerate}
Repeating these three steps $r/2$ times on consecutive rows and columns
we get 
\begin{equation}
\label{eq:sgrrk2}
\SG_\K(\lambda[i, j])= \begin{cases}
    0 & \text{ if $i$ and $j$ are both odd,}\\
    1 & \text{ if $i$ and $j$ are both even,}\\
    2 \mbox{ or } 3 & \text{ otherwise.}
\end{cases}
\end{equation}
In particular, \eqref{eq:sgrrk2} implies $\SG_\K(\lambda)=1$ while $\SG_\K(\lambda[0,j])$ alternates between $1$ and some value in $\{2,3\}$ for $0\leq j\leq r-1$. 
\end{proof}

\subsection{\textsc{Rook}}
\label{sec:rook}
In this section, we characterize $\P$-positions of \textsc{Rook} and give $\SG_\R$-values for certain generalized staircases. Recall that 
$\R = (1,0)^+ \cup (0,1)^+$. 
\begin{observation}\label{obs:2pilenim}
\textsc{Rook} on a rectangular partition is game-equivalent to two-pile \textsc{Nim}. 
In particular, if $r$ and $c$ are positive integers, then $\G_\R(\rectangle_{r,c})=(r-1)\oplus(c-1)$.
\end{observation}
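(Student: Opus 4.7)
The plan is to establish an isomorphism between the game DAG of $(\R, \rectangle_{r,c})$ and that of two-pile \textsc{Nim} with starting piles of sizes $r-1$ and $c-1$, and then invoke the classical Sprague-Grundy value of two-pile \textsc{Nim}.

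First, I would observe that every $\R$-move preserves rectangular shape, so the set of subpositions reachable from $(\R, \rectangle_{r,c})$ is exactly $\{\rectangle_{r',c'} : 1 \le r' \le r,\ 1 \le c' \le c\}$. I would define the bijection $\psi(\rectangle_{r',c'}) = (r'-1, c'-1)$ onto the subpositions of two-pile \textsc{Nim} with piles $(r-1, c-1)$. Next, I would verify that moves correspond under $\psi$: the legal Rook moves from $\rectangle_{r',c'}$ are precisely $(a,0) \mapsto \rectangle_{r'-a,c'}$ for $a \in [1, r'-1]$ (since $\rectangle_{r',c'}[a,0]$ must be nonempty) and $(0,b) \mapsto \rectangle_{r',c'-b}$ for $b \in [1, c'-1]$, and these correspond under $\psi$ to decreasing one of the two Nim piles by exactly $a$ or $b$. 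Since $\rectangle_{1,1}$ is the unique Rook-terminal position and $\psi(\rectangle_{1,1}) = (0,0)$ is the unique terminal of two-pile \textsc{Nim}, this establishes a DAG isomorphism, with the natural identification of edge labels, and hence the asserted game-equivalence.

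Finally, because Sprague-Grundy values are determined by the labeled game DAG, the isomorphism yields that $\G_\R(\rectangle_{r,c})$ equals the Sprague-Grundy value of the two-pile \textsc{Nim} position $(r-1,c-1)$, which by Bouton's theorem is $(r-1)\oplus(c-1)$. I foresee no substantive obstacle; the only point of care is the off-by-one convention (pile size $r-1$ rather than $r$), which arises because the empty partition is disallowed as a subposition, so $\rectangle_{1,1}$ plays the role of the terminal Nim position $(0,0)$. As an alternative, one could give a direct induction on $r+c$, computing $\Mex$ over the set $\{(r-a-1)\oplus(c-1) : a\in[1,r-1]\} \cup \{(r-1)\oplus(c-b-1) : b\in[1,c-1]\}$; this reduces to exactly the recurrence solved by Bouton.
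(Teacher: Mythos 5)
Your proposal is correct, and since the paper states this as an Observation without proof, your argument supplies exactly the justification the authors intend: the bijection $\rectangle_{r',c'}\mapsto(r'-1,c'-1)$ matching Rook moves to pile reductions, with the off-by-one arising because the empty partition is not a legal subposition, followed by Bouton's theorem. No gaps.
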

\noindent Thus, the results we obtain for non-rectangular boards can be viewed as generalizations of this game. The main result of this section is the characterization of the structure of all $\P$-positions with respect to \textsc{Rook}.

\begin{theorem}
Let $\lambda=\br{\lambda_1,\ldots,\lambda_r}$ be partition distinct from $\br1$.
Then $(\R,\lambda)$ is a losing position if and only if $\lambda$ has rank 0 and $\lambda[1,1] \ge \fstaircase{r-1}$. 
\end{theorem}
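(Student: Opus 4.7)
The plan is to proceed by strong induction on $|\lambda|$, with $\br{1}$ as the terminal base case. Let $S$ be the set of $\lambda\ne\br{1}$ satisfying the two listed conditions; unpacking these, $\lambda\in S$ iff $\lambda$ has $r$ parts, $\lambda_1 = r$, and $\lambda_j \ge r-j+2$ for every $j\in[2,r]$ (so in particular $\lambda_r\ge 2$). The inductive step reduces to proving two claims: (a) every move out of $\lambda\in S$ lands in a position outside $S\cup\{\br{1}\}$, and (b) every $\lambda\notin S\cup\{\br{1}\}$ admits a move into $S\cup\{\br{1}\}$.

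For (a), a row move $(a,0)$ with $a\in[1,r-1]$ yields $\mu = \br{\lambda_{a+1},\ldots,\lambda_r}$, which has $r-a$ parts and first part $\lambda_{a+1}\ge r-a+1$, so $\mu$ has positive rank (hence $\mu\notin S$) and $\mu_1\ge 2$ (hence $\mu\ne\br{1}$). Column moves go through verbatim after noting that $\R=\R^{-1}$ and a short verification that $S$ is closed under conjugation.

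For (b), I first handle two easy subcases by direct construction: if $\lambda_r=1$, the move $(r-1,0)$ produces $\br{1}$; if $\lambda_2<\lambda_1$ (interpreted trivially when $r=1$), the move $(0,\lambda_1-1)$ produces $\br{1}$. This reduces matters to $\lambda_r\ge 2$ and $\lambda_2=\lambda_1$, where I split on $\lambda_1\ge r$ versus $\lambda_1<r$; the latter reduces to the former by conjugation. In the rank-zero case $\lambda_1=r$, define $h(j)=\lambda_j+j-r$. Since $\lambda$ is weakly decreasing one has $h(j+1)\le h(j)+1$, so $h$ can only increase by at most $1$ per step. Setting $a^{*}=\max\{j\in[1,r] : h(j)\le 1\}$, the facts $h(1)=1$, $h(r)=\lambda_r\ge 2$, and the assumption $\lambda\notin S$ (which supplies some $j\in[2,r]$ with $h(j)\le 1$) together give $a^{*}\in[2,r-1]$; the one-step bound then forces $h(a^{*})=1$ exactly. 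The row move $(a^{*}-1,0)$ sends $\lambda$ to a partition $\mu$ of rank zero with $\mu[1,1]\ge\fstaircase{r-a^{*}}$, hence $\mu\in S$. When instead $\lambda_1>r$, the analogous argument applied to $\tilde h(j)=\lambda_j+j-\lambda_1$, with $t=\max\{j : \tilde h(i)\ge 2 \text{ for all } i\in[2,j]\}$, identifies the column move $(0,\lambda_1-t)$ into $S$.

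The main obstacle is the equality $h(a^{*})=1$ (and the analogous statement for $\tilde h$): without the monotonicity $h(j+1)\le h(j)+1$, one cannot rule out $h$ jumping from $\le 0$ directly to $\ge 2$ and thereby skipping the critical value $1$, which would preclude producing both rank zero and the required staircase containment in the target of the move. The remaining items -- conjugate closure of $S$, and checking that the produced $\mu$ is not $\br{1}$ at the boundary values of $a^{*}$ and $t$ -- are routine bookkeeping.
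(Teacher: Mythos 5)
Your proof is correct, and it takes a genuinely different route from the paper's. You verify directly that the candidate set $S\cup\{\br{1}\}$ has the two closure properties characterizing $\P_\R$ (no move stays inside; every non-terminal position outside has a move inside), and you construct the entering move explicitly via the potential function $h(j)=\lambda_j+j-r$: since $h$ increases by at most one per step and must pass from $\le 1$ to $\ge 2$, it hits the value $1$ exactly, and the last row where $h(j)\le 1$ is the target of the winning row move (with the column case handled by $\tilde h$ and conjugation, using $\R=\R^{-1}$ and the conjugation-invariance of $S$). The paper instead factors the theorem through the auxiliary notion of an \emph{ample} partition: \cref{lem:rookPchar} shows $\lambda\in\P_\R$ iff $\lambda$ has rank $0$, $\lambda_1=\lambda_2$, $\lambda_r\ge 2$ and $\lambda[1,1]$ is ample (every row and every column of $\lambda[1,1]$ contains a $\P_\R$-position), and \cref{lem:equi-p} characterizes ample partitions by a backwards induction over rows with a pigeonhole count of $\P_\R$-positions. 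Your route is more elementary and self-contained and has the virtue of exhibiting the winning move in closed form; the paper's route buys the structural by-product, recorded in the ampleness lemma, that each row and column of the interior of a $\P_\R$-position contains exactly one $\P_\R$-position. The boundary bookkeeping you flag (that $a^{*}\in[2,r-1]$ so the move $(a^{*}-1,0)$ is legal and its target is not $\br{1}$, the analogous bounds $2\le t\le r<\lambda_1$ for the column move, and the conjugate-closure of $S$) all checks out, so there is no gap.
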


We proceed with an important definition specific to this section.
A partition $\lambda \neq \br{}$ is said to be \emph{ample} if for every $i \in [0,r-1]$ there exists $j$ such that $\lambda[i,j] \in \P_\R$, and for every $j \in [0,\lambda_1-1]$ there exists $i$ such that $\lambda[i,j] \in \P_\R$. 

Note that if $\lambda[i, j] \in \P_\R$, then $\lambda[i, k]$ and $\lambda[m, j]$ are in $\N_\R$ for $k \neq j$ and $m \neq i$, provided they are well-defined. 
In other words, each row and column of $\lambda$ contains at most one $\P_\R$-position. Understanding the structure of ample partitions is essential to our theorem, due to the following lemma.

\begin{lemma}\label{lem:rookPchar}
Let $\lambda=\br{\lambda_1,\ldots,\lambda_r}$ be a partition distinct from $\br1$. Then $\lambda \in \P_\R$ if and only if $\lambda$ has rank $0$,
$\lambda_1=\lambda_2$,
$\lambda_r\ge 2$ and
$\lambda[1,1]$ is ample.
\end{lemma}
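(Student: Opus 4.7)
The plan is to prove both directions by directly analyzing the rook moves available from $\lambda$ and from its immediate successors. For the forward direction, I would first dispatch the conditions $\lambda_1=\lambda_2$ and $\lambda_r\geq 2$ by contradiction: if $\lambda_1>\lambda_2$ then the column move $(0,\lambda_1-1)$ reaches the terminal $\br{1}\in\P_\R$, and if $\lambda_r=1$ then the row move $(r-1,0)$ does likewise, either case contradicting $\lambda\in\P_\R$.

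I would then establish the ampleness of $\lambda[1,1]$. Fix $i\in[1,r-1]$. Since $\lambda\in\P_\R$, the successor $\lambda[i,0]$ lies in $\N_\R$, so it admits a winning move to some $\P_\R$-position. A vertical move from $\lambda[i,0]$ reaches $\lambda[i',0]$ for some $i'>i$, which is itself a move-target from $\lambda$ and therefore in $\N_\R$; so the winning move must be horizontal, giving $\lambda[i,j]\in\P_\R$ for some $j\geq 1$. Via the identity $\lambda[i,j]=\lambda[1,1][i-1,j-1]$ from \eqref{eq:chainsubpartition}, this produces a $\P_\R$-position in row $i-1$ of $\lambda[1,1]$. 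The symmetric argument covers all columns, yielding ampleness. For the rank condition, I would use a counting argument: no two $\P_\R$-positions of $\lambda[1,1]$ can share a row or a column (they would be mutually reachable by a single rook move), so ampleness forces a bijection between rows, columns, and $\P_\R$-positions of $\lambda[1,1]$. Conditions $\lambda_1=\lambda_2$ and $\lambda_r\geq 2$ ensure that $\lambda[1,1]$ has exactly $r-1$ rows and $\lambda_1-1$ columns, forcing $r-1=\lambda_1-1$, i.e.\ rank $0$.

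For the backward direction, I would show every immediate successor of $\lambda$ lies in $\N_\R$. Conditions $\lambda_1=\lambda_2$ and $\lambda_r\geq 2$ guarantee that $\lambda[1,1]$ has exactly $r-1$ rows and $\lambda_1-1$ columns, so ampleness provides, for each $i\in[1,r-1]$, some $j\geq 1$ with $\lambda[1,1][i-1,j-1]=\lambda[i,j]\in\P_\R$. This supplies a horizontal move $\lambda[i,0]\to\lambda[i,j]$ certifying $\lambda[i,0]\in\N_\R$; the column case is symmetric. Since these exhaust the moves from $\lambda$, we conclude $\lambda\in\P_\R$.

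The main obstacle I anticipate is the derivation of rank $0$ in the forward direction. It requires careful bookkeeping: one must know the exact row and column counts of $\lambda[1,1]$ (which is why conditions $\lambda_1=\lambda_2$ and $\lambda_r\geq 2$ enter the argument there) and must exploit the rigidity property that every row and column of a \textsc{Rook} position contains at most one $\P_\R$-position. The remaining conditions fall out of short contradictions or direct translation through \eqref{eq:chainsubpartition}, but rank $0$ is where the combinatorial structure specific to \textsc{Rook} plays its essential role.
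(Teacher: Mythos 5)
Your proof is correct and follows essentially the same route as the paper's: both directions rest on the observation that the immediate successors of $\lambda$ are exactly its first row and first column, and that (in the forward direction) the winning move from each such successor must point into $\lambda[1,1]$, which is precisely ampleness, while (in the backward direction) ampleness supplies a winning reply to every move from $\lambda$. The only substantive difference is that you explicitly derive the rank-$0$ condition by counting $\P_\R$-positions (exactly one per row and one per column of $\lambda[1,1]$, so $r-1=\lambda_1-1$); the paper's own proof of this lemma leaves that condition to the subsequent \cref{lem:equi-p}, so your version is, if anything, slightly more self-contained.
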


\begin{proof}
Let $\lambda \in \P_\R$.
By $\P/\N$-recursion it follows that
\begin{align}
   \lambda[0,1],\ ..., \ \lambda[0,\lambda_1-1], \ \lambda[1,0], \ ..., \ \lambda[r-1,0] 
   \label{eq:border_subpositions}
\end{align}
are in $\N_\R$. By $\P/\N$-recursion $\lambda[0,j]$ has a move to a $\P_\R$-position for each $j=1,...,\lambda_1-1$. Since $\lambda[0,j+1], \ ..., \ \lambda[0,\lambda_1-1]$ are $\N_\R$-positions, there is a $\P_\R$-position among $\lambda[1,j], \ ..., \ \lambda[\lambda_j'-1,j]$, where $\lambda'=\br{\lambda'_1,\ldots,\lambda'_{r'}}$ is the conjugate of $\lambda$. In particular, $\lambda[1,j]$ is well-defined, so there is a $\P_\R$-position in column $j$. 

Similarly, there is a move from $\lambda[i,0]$ to a $\P_\R$-position for each $i=1,...,r-1$ so there is a $\P_\R$-position in row $i$, implying  $\lambda[1,1]$ is ample. $\lambda[1,j]$ and $\lambda[i,1]$ are well-defined for $j=1,...,\lambda_1-1$ and $i=1,...,r-1$, so $\lambda_1=\lambda_2$ and $\lambda_r\ge 2$.

For the converse, assume $\lambda$ has rank 0,
$\lambda_1=\lambda_2$,
$\lambda_r\ge 2$ and  $\lambda[1,1]$ is ample. Consider the subpositions \eqref{eq:border_subpositions} reachable from $\lambda$ in one move.
Since $\lambda[1,1]$ is ample, all the subpositions from \eqref{eq:border_subpositions} can reach a $\P_\R$-position in a single move, so are in $\N_\R$.
Thus any move from $\lambda$ is to an $\N_\R$-position, so $\lambda$ is a $\P_\R$-position.
\end{proof}

\begin{lemma}\label{lem:equi-p}
A partition $\lambda=\br{\lambda_1,\ldots,\lambda_r}$ is ample if and only if 
$\lambda$ has rank $0$ and $\lambda \geq \fstaircase{r}$.
\end{lemma}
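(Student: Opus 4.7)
The plan is to handle the two implications separately. The forward direction turns out to be short once the correct pigeonhole observation is in place, whereas the backward direction is proved by induction on $r$ with a carefully chosen three-way case split.

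For $(\Rightarrow)$, I would first observe that no two $\P_\R$-positions of $\lambda$ can share a row or a column: if they did, a single $\R$-move would connect two $\P_\R$-positions and contradict the $\P/\N$-recursion. Combined with ampleness, this yields a bijection $\sigma \colon [0,r-1] \to [0,\lambda_1-1]$ with $\lambda[i,\sigma(i)] \in \P_\R$, and matching cardinalities force $r = \lambda_1$, so $\lambda$ has rank $0$. Well-definedness of $\lambda[i,\sigma(i)]$ gives $\lambda_{i+1} \geq \sigma(i)+1$, and since $\lambda$ is weakly decreasing we in fact have $\lambda_{i+1} \geq \lambda_{i'+1} \geq \sigma(i')+1$ for every $i' \geq i$. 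The set $\{\sigma(i),\sigma(i+1),\ldots,\sigma(r-1)\}$ is an $(r-i)$-element subset of $\{0,\ldots,r-1\}$, hence its maximum is at least $r-i-1$; therefore $\lambda_{i+1} \geq r-i$, which is precisely $\lambda \geq \fstaircase{r}$.

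For $(\Leftarrow)$ I would proceed by induction on $r$, with base case $\lambda = \br{1}$ trivial. For $r \geq 2$, the strategy is to split into three cases based on $\lambda_r$ and $\lambda_2$ (both tightly constrained by $\lambda \geq \fstaircase{r}$), and in each case to exhibit one or two ample subpartitions of strictly fewer rows whose $\P_\R$-positions together cover every row and column of $\lambda$. If $\lambda_r = 1$, then $\lambda[0,1]$ has $r-1$ rows, rank $0$, and dominates $\fstaircase{r-1}$, so it is ample by IH; together with the corner $\lambda[r-1,0] = \br{1}$ this covers the remaining row and column. If $\lambda_r \geq 2$ and $\lambda_2 = r-1$, the corner $\lambda[0,r-1] = \br{1}$ handles row $0$ and column $r-1$, and $\lambda[1,0]$ is ample by IH. If $\lambda_r \geq 2$ and $\lambda_2 = r$, subdivide further: when $\lambda_k \geq r-k+2$ for all $k \in [2,r]$, the partition $\lambda[1,1]$ is ample by IH and \cref{lem:rookPchar} forces $\lambda \in \P_\R$ itself; otherwise, let $k^* \in [3,r-1]$ be the smallest index with $\lambda_{k^*} = r-k^*+1$ and observe that the subpartitions $\lambda[k^*-1,0]$ and $\lambda[0,r-k^*+1]$ have rank $0$, dominate $\fstaircase{r-k^*+1}$ and $\fstaircase{k^*-1}$ respectively, and are ample by IH.

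The main obstacle is the last sub-case of $(\Leftarrow)$: to compute that $\lambda[0, r-k^*+1]$ has exactly $k^* - 1$ rows and top part $k^* - 1$, one must use that $\lambda_k \geq r-k+2$ strictly for $k < k^*$ while $\lambda_k \leq \lambda_{k^*} = r-k^*+1$ for $k \geq k^*$. One then verifies that $\lambda[k^*-1,0]$ covers rows $k^*-1,\ldots,r-1$ and columns $0,\ldots,r-k^*$ of $\lambda$, while $\lambda[0,r-k^*+1]$ covers rows $0,\ldots,k^*-2$ and columns $r-k^*+1,\ldots,r-1$, so that together they partition the row and column indices of $\lambda$ without overlap or gap.
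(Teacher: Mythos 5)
Your proof is correct, but it takes a genuinely different route from the paper's, most notably in the backward direction. For $(\Rightarrow)$ both arguments rest on the observation that each row and column contains at most one $\P_\R$-position; the paper counts these positions to get $n_0=r=\lambda_1$ and then derives $\lambda\ge\fstaircase{r}$ by contradiction from a largest index $i$ with $\lambda_i\le r-i$, whereas your bijection $\sigma$ gives the same rank-$0$ conclusion and then a direct, contradiction-free bound $\lambda_{i+1}\ge\max\{\sigma(i')\colon i'\ge i\}+1\ge r-i$, which is slightly cleaner. The real divergence is in $(\Leftarrow)$: the paper fixes $\lambda$ and runs a backwards induction over its rows, using pigeonhole (there are $r-i$ $\P_\R$-positions below row $i$ but $\lambda_i\ge r-i+1$ cells in it) to place exactly one $\P_\R$-position in each row, and then conjugates to handle the columns; you instead induct on the number of rows $r$ and decompose $\lambda$ into one or two smaller ample, staircase-dominating pieces plus terminal corners whose $\P_\R$-positions jointly cover all rows and columns. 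Your decomposition is longer and needs the careful bookkeeping of the subcase $\lambda_2=r$, $\lambda_r\ge2$ (including the appeal to \cref{lem:rookPchar} to certify $\lambda\in\P_\R$ when $\lambda_k\ge r-k+2$ throughout, which is legitimate since that lemma is proved independently), but it buys extra structural information about where the $\P_\R$-positions sit relative to the splitting index $k^*$, while the paper's pigeonhole argument is shorter and yields uniqueness of the $\P_\R$-position in each row for free. The edge cases you flag all check out: $\lambda_{r-1}\ge 2$ guarantees that $\lambda[0,1]$ has exactly $r-1$ rows in the $\lambda_r=1$ case, and the minimal index with $\lambda_{k^*}=r-k^*+1$ necessarily lies in $[3,r-1]$ because $\lambda_2=r$ and $\lambda_r\ge2$ exclude the endpoints.
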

\begin{proof}
Let $\lambda'=\br{\lambda'_1,\ldots,\lambda'_{r'}}$ be the conjugate of $\lambda$.
\begin{enumerate}
    \item[$\Leftarrow$] \label{enu:left}
    Assume that $\lambda$ has rank $0$ and $\lambda \geq \fstaircase{r}$. We show that for each $i=1,...,r$ we have exactly one $\P_\R$-position among 
    \begin{align}
        \lambda[i-1,0], \ \lambda[i-1,1], \ \ldots, \ \lambda[i-1,\lambda_i-1].\label{eqn:p_position}
    \end{align}
    The proof is by backwards induction on the rows. 
    The statement is true for the last row as moving right as far as possible gives position $\br{1} \in \P_\R$.
    We assume the statement holds for the rows $i+1,\ldots,r$ and show the statement is true for row $i$. Among rows $i+1,\ldots,r$ the number of $\P_\R$-positions is exactly $r-i$ by induction hypothesis. Since $\lambda \geq \fstaircase{r}$ the sequence \eqref{eqn:p_position} is of length $\lambda_i \geq r-i+1$, implying at least one position in the sequence will not be able to move to a $\P_\R$-position, by pigeon-hole principle. Such a position is in $\P_\R$. Since $\lambda$ has rank $0$ and $\lambda \geq \fstaircase{r}$ if and only if $\lambda'$ has rank $0$ and $\lambda' \geq \fstaircase{r}$, it holds that $\lambda'$ has exactly one $\P_\R$-position in every row, thus $\lambda$ has a $\P_\R$-position in every column. Thus $\lambda$ is ample.
    
    \item[$\Rightarrow$] \label{enu:right}
    Assume that $\lambda$ is ample, and let $n_0$ be the number of subpositions of $(\R,\lambda)$ which are in $\P_\R$. As $\lambda$ is ample, this implies that $n_0=r$, as well as $n_0=\lambda_1$, which in turn implies that $\lambda$ has rank $0$. It remains to show that $\lambda \geq \fstaircase{r}$. 
    Suppose not, and let $i$ be the largest integer such that $\lambda_i\leq r-i$.
    Because $\lambda$ is ample, for each $k = i+1, \ldots, r$ there is an $\ell$ such that $\lambda[k, \ell] \in \P_\R$. Since the sequence in \eqref{eqn:p_position} has length $\lambda_i \leq r-i$ and there are no $\ell, k_1, k_2$ with $k_1 \neq k_2$ such that $\lambda[k_1, \ell] \in \P_\R$ and $\lambda[k_2, \ell] \in \P_\R$, it follows that each position in \eqref{eqn:p_position} can move to one of those $\P_\R$-positions, implying that all of them are $\N_\R$.
    Thus row $i$ has no $\P_\R$-positions, contradicting the assumption that $\lambda$ does not contain a $\fstaircase{r}$. \qedhere
\end{enumerate}
\end{proof}

We conclude this subsection with a result concerning the Sprague-Grundy values of \textsc{Rook} on subpositions of $\fgenstaircase {2^\ell}{2^\ell}k$.

\begin{proposition}
For any positive integers $\ell,k$, and any  $i,j$ such that $0 \leq \left\lfloor\frac{i}{2^\ell}\right\rfloor+\left\lfloor\frac{j}{2^\ell}\right\rfloor \leq k-1$,
let $f(i,j)=k-1-\left\lfloor\frac{i}{2^\ell}\right\rfloor-\left\lfloor\frac{j}{2^\ell}\right\rfloor$.
We have 
\begin{align}\label{eq:clfij}
\SG_\R \left ( \fgenstaircase {2^\ell}{2^\ell}k[i,j] \right)= 2^\ell
 f(i,j)+\left( (-i-1 ) \oplus (-j-1)\right) \bmod 2^\ell.
 \end{align}
Furthermore, let $S_\tau=\left \{\SG_\R\left (\fgenstaircase {2^\ell}{2^\ell}k[i,j] \right) \colon f(i,j)= \tau\right\}$ for any integers $\tau \in [0,k-1]$ and $i\in [0,(k-\tau)2^\ell-1]$.
We have
\begin{align*}
S_\tau= \left [\tau2^\ell,(\tau+1)2^\ell-1 \right] \qquad \text{and}
\qquad \bigcup_{i=0}^\tau S_i= \left[0,(\tau+1)2^\ell-1 \right].
\end{align*}

\end{proposition}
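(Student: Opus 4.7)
I proceed by strong induction on $\tau:=f(i,j)$, proving the Sprague-Grundy identity \eqref{eq:clfij} for every valid $(i,j)$; the two subsequent statements about $S_\tau$ and its cumulative unions then follow as immediate corollaries.

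For the base case $\tau=0$, the defining equality $\lfloor i/2^\ell\rfloor+\lfloor j/2^\ell\rfloor=k-1$ forces $\fgenstaircase{2^\ell}{2^\ell}k[i,j]$ to be a rectangular partition of dimensions $(2^\ell-i')\times(2^\ell-j')$, where $i':=i\bmod 2^\ell$ and $j':=j\bmod 2^\ell$: by \cref{obs:row_length} every surviving row of the current block-row has length $2^\ell-j'$, and the next block-row is trimmed away entirely. \cref{obs:2pilenim} then yields $\SG_\R=(2^\ell-1-i')\oplus(2^\ell-1-j')$, which matches \eqref{eq:clfij} via the elementary congruence $(-n-1)\bmod 2^\ell=2^\ell-1-(n\bmod 2^\ell)$.

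For the inductive step, fix $(i,j)$ with $\tau:=f(i,j)\ge 1$, put $x:=(-i-1)\bmod 2^\ell$, $y:=(-j-1)\bmod 2^\ell$, and set $v:=2^\ell\tau+(x\oplus y)$. Two claims must be verified: (i)~no position reachable from $\lambda[i,j]$ in a single move has Sprague-Grundy value $v$, and (ii)~every integer $u\in\{0,1,\dots,v-1\}$ is the Sprague-Grundy value of some one-move successor. For (i), a downward step of size $s\ge 1$ either preserves the block-row, in which case $\tau$ is unchanged but $x$ is replaced by $x-s$, thereby shifting $x\oplus y$ away from its original value, or drops the block-row, in which case the whole Sprague-Grundy value lies strictly below $2^\ell\tau\le v$; rightward steps are symmetric. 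For (ii) write $u=2^\ell\tau'+\rho$ with $\tau'\le\tau$ and $0\le\rho<2^\ell$. When $\tau'=\tau$ we have $\rho<x\oplus y$, and the standard two-pile Nim argument yields either an $x'<x$ with $x'\oplus y=\rho$ or a $y'<y$ with $x\oplus y'=\rho$; these translate to a downward or rightward move within the current block, respectively. When $\tau'<\tau$, a single downward step into the block $(a',b)$ with $a'=k-1-b-\tau'$ and new row-offset $i'_{\mathrm{new}}:=2^\ell-1-(\rho\oplus y)\in\{0,\dots,2^\ell-1\}$ reaches a position of Sprague-Grundy value exactly $u$; the target block is a full $2^\ell\times 2^\ell$ square because $a'+b\le k-1$, and the required step $s=(a'-a)2^\ell+i'_{\mathrm{new}}-i'$ is positive since $a'>a$.

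The interval identity $S_\tau=[\tau 2^\ell,(\tau+1)2^\ell-1]$ now follows immediately: inside any full block $(a,b)$ with $a+b=k-1-\tau$ the pair $(i',j')$ sweeps $\{0,\dots,2^\ell-1\}^2$, so the XOR contribution sweeps $\{0,\dots,2^\ell-1\}$ and the Sprague-Grundy values sweep $[\tau 2^\ell,(\tau+1)2^\ell-1]$; the cumulative identity $\bigcup_{i=0}^\tau S_i=[0,(\tau+1)2^\ell-1]$ is then a consequence of the disjointness of successive intervals. The main technical obstacle is the $\tau'<\tau$ sub-case of (ii): one must check that a single downward move (rather than a combination of vertical and horizontal moves) actually realizes the prescribed $(i'_{\mathrm{new}},j')$ cell inside the intended block, which in turn requires the block to be fully present in the staircase, guaranteed by the inequality $a'+b\le k-1$.
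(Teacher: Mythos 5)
Your overall strategy --- induction on $\tau=f(i,j)$, with the rectangle/two-pile \textsc{Nim} base case and an inductive step that splits the one-move successors of $\fgenstaircase{2^\ell}{2^\ell}{k}[i,j]$ into those lying in the same $2^\ell\times 2^\ell$ block and those in lower blocks, the latter contributing the full interval $[0,\tau 2^\ell-1]$ --- is the same as the paper's. The only presentational difference is that the paper packages your claims (i) and (ii) by observing that, once the lower blocks are known to supply exactly $[0,\tau 2^\ell-1]$, the mex computation inside the block reproduces that of $\frectangle{2^\ell}{2^\ell}$ shifted by $\tau 2^\ell$ (using the Latin-square property of the square's Grundy table via \cref{obs:2pilenim}), whereas you unfold the two-pile \textsc{Nim} argument explicitly.

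There is, however, one structural gap. You declare the induction to be on $\tau$ alone, but both halves of your inductive step invoke the formula \eqref{eq:clfij} for successors lying in the \emph{same} block as $(i,j)$: in (i) you assert that a within-block downward step produces the value $2^\ell\tau+\left((x-s)\oplus y\right)$, and in the $\tau'=\tau$ sub-case of (ii) you assert that the Nim move lands on a position of value $2^\ell\tau+\rho$. These successors have $f$-value equal to $\tau$, so they are not covered by the induction hypothesis as stated, and on its face the argument is circular on exactly the successors that determine whether the mex equals $v$. The repair is routine: every rook move strictly increases $i+j$, so one may induct lexicographically on the pair $(\tau,\,i+j)$ (or adopt the paper's device of quoting the rectangle result wholesale for the within-block structure). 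Everything else checks out --- the identification of the base-case position as a $(2^\ell-i')\times(2^\ell-j')$ rectangle, the well-definedness of the target cell in the $\tau'<\tau$ sub-case via $a'+b\le k-1$, the positivity of the required step, and the derivation of the statements about $S_\tau$ and their union.
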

\begin{proof}
Observe that  $\fgenstaircase {2^\ell}{2^\ell}k[i,j]$ 
is well-defined if and only if $0 \leq \left\lfloor\frac{i}{2^\ell}\right\rfloor+\left\lfloor\frac{j}{2^\ell}\right\rfloor \leq k-1$.We proceed by induction on $\tau$. 
If $\tau=0$, i.e. for $i,j$ with $f(i,j)=0$, we have
$\fgenstaircase {2^\ell}{2^\ell}k[i,j]=\frectangle{ 2^\ell-(i\bmod 2^\ell)}{2^\ell-(j\bmod 2^\ell)}$ so
\begin{align*}
\SG_\R\left(\fgenstaircase {2^\ell}{2^\ell}k[i,j] \right)&= \left(( 2^l - (i \bmod 2^\ell)-1) \oplus (2^l - (j \bmod 2^\ell)-1\right)\\
&= \left((-i-1) \bmod 2^\ell \right) \oplus \left((-j-1) \bmod 2^\ell \right) \\
&= \left( (-i-1 ) \oplus (-j-1)\right) \bmod 2^\ell \leq 2^\ell -1,
\end{align*}
where the first equality is due to \cref{obs:2pilenim}. Thus 
we have that 
\[\left\{ \SG_\R\left( \fgenstaircase {2^\ell}{2^\ell}k[t,u] \right) \colon f(t,u)=0 \right\}= \left [0,2^\ell-1 \right].\]
Assume now $\tau>0$.
To show the first part of the claim, i.e. \eqref{eq:clfij}, fix arbitrary $i,j$ such that $f(i,j)=\tau$. 
By induction hypothesis all $\G$-values of positions $(i_1,j_1)$ with $f(i_1,j_1)\neq \tau$ exactly correspond to  values from $\bigcup_{i=0}^{\tau-1}S_{i}$. 
This implies that the $\G$-value of $\fgenstaircase {2^\ell}{2^\ell}k[i,j]$ is the same as 
$\SG_\R \left( \frectangle {2^\ell}{2^\ell}[i',j'] \right)$ incremented by $\tau2^\ell$, where $i',j'$ are the remainders of $i,j$, divided by $2^\ell$, respectively. 
Observe that this is exactly what we want to show in \cref{eq:clfij}.

To verify the remainder of the claim it is enough to recall that the $\G$-values of a square $\frectangle{r}{r}$ form a latin square, i.e. use all values from $[0,r-1]$ in every row as well as in every column. 
\end{proof}

\begin{corollary}
    \label{thm:rookgenstairsg}
Let $k,\ell,i,j$ be such that $\fgenstaircase {2^\ell}{2^\ell}k[i,j]$ is well-defined. Then $\SG_\R \left(\fgenstaircase {2^\ell}{2^\ell}k \right)=2^\ell(k-1)$, and more generally, 
      \[\SG_\R \left( \fgenstaircase {2^\ell}{2^\ell}k[i,j] \right) = 2^\ell\left(k-1-\left\lfloor\frac{i}{2^\ell}\right\rfloor-\left\lfloor\frac{j}{2^\ell}\right\rfloor\right)+\left( (-i-1 ) \oplus (-j-1)\right) \bmod 2^\ell.\]
\end{corollary}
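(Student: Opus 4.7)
The claim follows essentially by direct substitution from the preceding proposition. The plan is to (i) substitute the explicit expression for $f(i,j)$ into the proposition's formula to obtain the general displayed identity, and (ii) specialize to $i=j=0$ to obtain $\SG_\R(\fgenstaircase{2^\ell}{2^\ell}k)=2^\ell(k-1)$.

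First, recall from the preceding proposition that whenever $\fgenstaircase{2^\ell}{2^\ell}k[i,j]$ is well-defined (equivalently, $0\le \lfloor i/2^\ell\rfloor+\lfloor j/2^\ell\rfloor\le k-1$), we have
\[
\SG_\R\bigl(\fgenstaircase{2^\ell}{2^\ell}k[i,j]\bigr)=2^\ell f(i,j)+\bigl((-i-1)\oplus(-j-1)\bigr)\bmod 2^\ell,
\]
where $f(i,j)=k-1-\lfloor i/2^\ell\rfloor-\lfloor j/2^\ell\rfloor$. Substituting the definition of $f(i,j)$ into this formula yields exactly the displayed identity in the corollary, establishing the general part of the claim.

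For the first part, specialize to $i=j=0$. Then $\lfloor 0/2^\ell\rfloor=0$, so $f(0,0)=k-1$. Moreover, $(-0-1)\bmod 2^\ell=2^\ell-1$, so
\[
\bigl((-0-1)\oplus(-0-1)\bigr)\bmod 2^\ell=(2^\ell-1)\oplus(2^\ell-1)=0.
\]
Plugging these into the general formula gives $\SG_\R(\fgenstaircase{2^\ell}{2^\ell}k)=2^\ell(k-1)+0=2^\ell(k-1)$, as required.

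There is no real obstacle here: the corollary is a cosmetic restatement of the proposition in which $f$ has been inlined, together with the evaluation at the starting position $(i,j)=(0,0)$. The only point worth care is the convention $(-m-1)\bmod 2^\ell$, which for $m=0$ gives $2^\ell-1$ so that the XOR term vanishes and the leading term $2^\ell(k-1)$ stands alone.
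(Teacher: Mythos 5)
Your proposal is correct and matches the paper's intent: the corollary is stated without separate proof precisely because it is the preceding proposition with $f(i,j)$ inlined, plus the evaluation at $(i,j)=(0,0)$ where $\left((-1)\bmod 2^\ell\right)\oplus\left((-1)\bmod 2^\ell\right)=0$ kills the XOR term. Nothing is missing.
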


\subsection{\textsc{Queen}}
\label{sec:queen}
\textsc{Queen} is played with moveset $\Q=(1,0)^+ \cup (0, 1)^+ \cup (1, 1)^+.$
When played on rectangular partitions, \textsc{Queen} is closely related to \textsc{Wythoff}. 
Recall that \textsc{Wythoff} \cite{Wythoff}, denoted by $W(a,b)$, is played with two piles of tokens of size $a$ and $b$,
where a move consists of removing an arbitrary positive number of tokens from a pile, or removing the same number of tokens from both piles. 
For $c,r \in \mathbb Z^{>0}$, the game $(\Q,c^r)$ is the same as $W(c-1, r-1)$; see Nivasch \cite{nivasch2005more}.

Let $\lambda = \br{\lambda_1, \ldots, \lambda_r}$ be a partition and let $k$ be a positive integer. The next result concerns a ``stair'' of $\lambda$-steps. We denote such a partition by $S_{\lambda}^k$; for example, $S_{\br{3, 1}}^3$ is shown in \cref{fig:lambda-stair}. More formally, let $S_{\lambda}^k$ be the partition 
\[\begin{array}{c}
    \br{(k-1) \lambda_1 + \lambda_1, \ (k-1)\lambda_1 + \lambda_2, \ \ldots, \ (k-1) \lambda_1 + \lambda_r,  \phantom{.}\\
    \phantom{(} (k-2)\lambda_1 + \lambda_1, \ (k-2)\lambda_1 + \lambda_2, \ \ldots, \ (k-2)\lambda_1 + \lambda_r, \phantom{.}\\ 
    \vdots \\
    \phantom{(}(k-k) \lambda_1 + \lambda_1, \ (k-k)\lambda_1 + \lambda_2, \ \ldots, \ (k-k) \lambda_1 + \lambda_r }.
\end{array}\]

\begin{figure}[b]
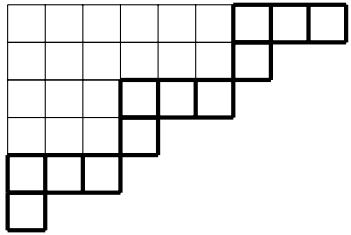

    \centering
    \picslambdak
    \caption{The partition $S_{\br{3, 1}}^3$.}
    \label{fig:lambda-stair}
\end{figure}
\begin{lemma}
Let $\lambda$ be a partition of rank $0$ and let $k>1$ be an odd integer.
Then $\left(\Q,S_\lambda^k \right)\in \N_\Q$.
\end{lemma}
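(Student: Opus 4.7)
The plan is to exhibit a single winning Queen move from $S_\lambda^k$ to a $\P$-position, by case analysis on $\lambda$. Throughout, I use the conjugate invariance of \cref{obs:conjugateinvariance}; since $\Q^{-1} = \Q$ and a direct computation gives $(S_\lambda^k)' = S_{\lambda'}^k$, we have $(\Q, S_\lambda^k)$ game-equivalent to $(\Q, S_{\lambda'}^k)$, so I may freely swap $\lambda \leftrightarrow \lambda'$.

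First, if $\lambda_r = 1$ (or dually $\lambda_1 > \lambda_2$, passing to $\lambda'$ since then $\lambda'_r = 1$), the horizontal move $(kr - 1, 0)$ eliminates all rows of $S_\lambda^k$ but the last, leaving the single-cell partition $\br{\lambda_r} = \br{1}$, a terminal $\P$-position; thus $S_\lambda^k \in \N$. In the remaining case $\lambda_r \geq 2$ and $\lambda_1 = \lambda_2 = r$, I would first attempt a diagonal move $(c, c)$ landing on $\br{1}$. Writing the $(c+1)$-st row index of $S_\lambda^k$ in block-form as $(b-1)r + s$ with $b \in [1,k]$ and $s \in [1, r]$, the requirement that this row have length exactly $c + 1$ reduces (using oddness of $k$ to force $b = (k+1)/2$) to $\lambda_s = s$ together with the corner condition $\lambda_{s+1} < \lambda_s$ (or $s = r$ with $\lambda_r = r$). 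When $\lambda = \br{r^r}$ the choice $c = (k+1)r/2 - 1$ satisfies these, and the oddness of $k$ is what makes $(k+1)/2$ an integer and guarantees that the following row vanishes after subtracting $c$.

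The main obstacle is the subcase where $\lambda_r \geq 2$, $\lambda_1 = \lambda_2 = r$, and no index $s$ satisfies $\lambda_s = s$ at a corner (e.g., $\lambda = \br{4,4,4,2}$, for which the sequence $\lambda_s - s$ skips over zero). Here the quick ``move to $\br{1}$'' strategies break down, and the winning move must target a nontrivial $\P$-position. I would look for a horizontal move $((k-1)r + j, 0)$ landing on a tail $\br{\lambda_{j+1}, \ldots, \lambda_r}$ that happens to be a $\P$-position for suitable $j$ (indeed, for $\br{4,4,4,2}$ the tail $\br{4, 4, 2}$ turns out to be $\P$), or use induction on $r$ via the rank-$0$ subpartition $\lambda[1, 1]$ together with a comparison of moves on $S_\lambda^k$ and $S_{\lambda[1,1]}^k$. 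Producing such a move uniformly in this final subcase, and verifying its target is $\P$, is the heart of the argument.
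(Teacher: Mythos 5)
Your proof is incomplete, and you say so yourself: the subcase $\lambda_r\geq 2$, $\lambda_1=\lambda_2=r$, with no corner on the diagonal of $\lambda$ is exactly where all the content lies, and neither of your proposed fallbacks (hunting for a tail $\br{\lambda_{j+1},\dots,\lambda_r}$ that ``happens to be'' a $\P_\Q$-position, or an unspecified induction on $r$) comes with an argument that it succeeds uniformly. The deeper problem is structural: you are trying to win by always reaching a \emph{fixed, shape-determined} target such as $\br{1}$, but which position of $S_\lambda^k$ is a reachable $\P_\Q$-position depends on the game-theoretic structure of $\lambda$, not just on its profile of corners. Your own example $\br{4,4,4,2}$ already shows the ``reach $\br{1}$'' idea fails, and there is no reason a single horizontal move to a suffix of $\lambda$ should land on a $\P_\Q$-position either.

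The paper's proof avoids all of this by conditioning on the status of $\lambda$ itself rather than on its shape. If $\lambda\in\P_\Q$, move down $(k-1)\lambda_1$ rows to the bottom $\lambda$-step, which \emph{is} $\lambda$. If $\lambda\in\N_\Q$, its winning move goes along the first row, the first column, or the main diagonal of $\lambda$; the corresponding move in $S_\lambda^k$ reaches the same $\P_\Q$-position inside the top-right $\lambda$-step, the bottom-left $\lambda$-step, or the middle $\lambda$-step, respectively. The hypotheses you were struggling to exploit enter cleanly here: $k$ odd guarantees a middle step exists, and rank $0$ guarantees its upper-left corner sits on the main diagonal of $S_\lambda^k$, so a diagonal move of $\lambda$ transfers to a diagonal move of $S_\lambda^k$. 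I'd encourage you to keep the (correct) observation that $(S_\lambda^k)'=S_{\lambda'}^k$ in your toolkit, but to replace the shape case analysis with this reduction to the $\P$/$\N$ status of $\lambda$.
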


\begin{proof}
First suppose that $\lambda$ is a $\P_\Q$-position. Then the first player can move to $S_{\lambda}^k[(k-1)\lambda_1, 0] \in \P_\Q$. Thus, $S_\lambda^k$ is an $\N_\Q$-position. 

Next, suppose that $\lambda$ is an $\N_\Q$-position. Then there is a move from $\lambda$ to a $\P$-position, either along the first row, the first column, or the main diagonal of $\lambda$. The first player can win on $S_\lambda^k$ by moving to that $\P$-position of the first, middle, or last $\lambda$-step respectively by moving on the first row, first column, or main diagonal of $S_\lambda^k$ respectively (the upper left corner of the middle $\lambda$-step lies on the main diagonal of $S_\lambda^k$ since $\lambda$ has rank $0$). Thus $S_\lambda^k$ is an $\N$-position. 
\end{proof}

\begin{lemma}\label{lem:double}
Let $\lambda = \br{\lambda_1, \dots, \lambda_r}$ be a partition. 
If $\lambda_1-1>2(r-1)$ or $r-1>2(\lambda_1-1)$ then $\lambda \in \N_\Q$-position.
\end{lemma}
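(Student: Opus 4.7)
The plan is to invoke conjugate invariance (\cref{obs:conjugateinvariance}) to reduce to the hypothesis $\lambda_1 - 1 > 2(r-1)$, i.e.\ $\lambda_1 \geq 2r$, and then to exhibit a concrete winning move from $\lambda$ to a $\P_\Q$-position in each subcase.

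First I would handle the easy situations. When $r = 1$, or when $r \geq 2$ and $\lambda_2 < \lambda_1$, the move $(0, \lambda_1 - 1) \in \Q$ sends $\lambda$ to the terminal partition $\br{1} \in \P_\Q$, because every row $\lambda_i$ with $i \geq 2$ satisfies $\lambda_i - (\lambda_1 - 1) \leq 0$ and is therefore deleted. Similarly, if $\lambda_r = 1$, the move $(r-1, 0)$ reaches $\br{1}$, and if $\lambda_r = r$, the diagonal move $(r-1, r-1)$ reaches $\br{1}$.

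The substantive case is $r \geq 2$ with $\lambda_2 = \lambda_1$. Let $k \geq 2$ be the multiplicity of the largest part, so $\lambda_1 = \cdots = \lambda_k$ and, setting $\lambda_{r+1} := 0$, either $k = r$ or $\lambda_{k+1} < \lambda_1$. From $\lambda_1 \geq 2r \geq 2k$ and from the classical fact that every Wythoff $\P$-position $(a,b)$ with $a \geq 1$ satisfies $b \leq 2a$, Wythoff's theorem (applied via the identification of $(\Q,\br{c^k})$ with the Wythoff game $W(c-1,k-1)$) furnishes a $c^* \in [1, 2k-1]$ such that $\br{(c^*)^k} \in \P_\Q$. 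When $\lambda_{k+1} \leq \lambda_1 - c^*$, the move $(0, \lambda_1 - c^*)$ strips rows $k+1, \ldots, r$ and lands on the P-rectangle $\br{(c^*)^k}$. When $\lambda_{k+1}$ is too large for this rectangle to be reachable in one step, I would aim at a non-rectangular $\P_\Q$-position instead: small examples such as $\br{3,3,2}$ or $\br{4,4,2}$ (verifiable by direct computation) show the kind of target, and these are typically reached via a carefully chosen $(0, j)$, $(i, 0)$, or $(i, i)$ move.

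The principal obstacle lies in this last sub-case, where $\lambda$ has a long equal initial block followed by an almost-as-long next row: no small rectangular P-target is accessible, so the proof must identify non-rectangular P-positions explicitly. This difficulty is consistent with the paper's framing that $\P_\Q$ resists a clean general characterization (cf.\ \cref{sec:queen}).
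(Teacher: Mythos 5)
Your proposal has a genuine gap, and you have named it yourself: in the main case ($\lambda_2=\lambda_1$ with the next part $\lambda_{k+1}$ too large for the rectangular target $\br{(c^*)^k}$ to be reachable), you have no argument. This case is not exotic --- e.g.\ $\lambda=\br{8,8,7,7}$ satisfies the hypothesis, has $k=2$, $c^*=3$, and the move $(0,5)$ lands on $\br{3,3,2,2}$ rather than $\br{3,3}$ --- so the proof as written does not go through. The underlying problem is strategic: you are trying to \emph{exhibit} a winning move, which requires knowing which positions are in $\P_\Q$, and as \cref{sec:queen} notes, $\P_\Q$ on general partitions has no known clean description. Ad hoc non-rectangular targets like $\br{3,3,2}$ cannot be assembled into a proof covering all partitions with $\lambda_1\ge 2r$.

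The paper avoids this entirely with a counting argument that proves a $\P_\Q$-position \emph{exists} in the first row without identifying it. Since a horizontal move reaches any cell to the right in the same row, each row contains at most one $\P_\Q$-position; in particular rows $2,\dots,r$ contain at most $r-1$ of them in total. If no cell $\lambda[0,j]$ with $j\ge 1$ were in $\P_\Q$, then each such cell, being in $\N_\Q$, would have to reach a $\P_\Q$-position in a lower row by a vertical or diagonal move; but a fixed $\P_\Q$-position in a lower row is reachable from at most two cells of the first row (one directly above, one diagonally up-left), so at most $2(r-1)$ of the $\lambda_1-1>2(r-1)$ cells could be in $\N_\Q$ --- a contradiction. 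Hence some $\lambda[0,j]$ with $j\ge1$ is in $\P_\Q$ and $\lambda\in\N_\Q$. Your preliminary reductions (conjugate invariance, the cases $r=1$ and $\lambda_2<\lambda_1$) are correct but unnecessary under this argument; if you want to salvage your approach, you would need to replace the explicit-target step with a non-constructive existence argument of this kind.
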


\begin{proof}
Suppose $\lambda_1-1>2(r-1)$; the case $r-1>2(\lambda_1-1)$ is similar. If a row contains a $\P_\Q$-position, then every cell in the same row and to the left of it is an $\N_\Q$-position since it is possible to reach the $\P_\Q$-position from such a cell in a single move. Thus, each row contains at most one $\P_\Q$-position. We will show that there is a $\P_\Q$-position in the first row to the right of the first column, making $\lambda$ an $\N_\Q$-position. 

If the last cell in the first row is a $\P_\Q$-position, we're done, so suppose not. The $\N_\Q$-positions in the first row are precisely those cells from which it is possible to reach a $\P_\Q$-position in a lower row. Any cell in a row below the first that is a $\P_\Q$-position can be reached in a single move from one or two cells in the first row (certainly by a vertical move and possibly a diagonal move). Thus the number of $\N_\Q$-positions in the first row and not the first column is at most $2(r-1)$. But the number of cells in the first row and not the first column is $\lambda_1-1 > 2(r-1)$, so at least one of them must be a $\P_\Q$-position, making $\lambda$ an $\N_\Q$-position.  
\end{proof}

We conclude this section with a lemma that allows us to determine the Sprague-Grundy value of a partition in the games of \textsc{Rook} and \textsc{Queen} for partitions contained in a certain staircase. 

\begin{lemma}\label{lem:queenrook}
Let $M\in \{\R,\Queen\}$, 
 let $\lambda=\br{\lambda_1,\dots,\lambda_r}$ be a partition, and let $k=\max(\lambda_1,r)$.
 If $ \lambda \leq \fstaircase{k}$, then $\SG_M(\lambda)=k-1$.
\end{lemma}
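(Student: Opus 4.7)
The plan is to proceed by induction on $k$, exploiting that both $\R$ and $\Q$ are self-inverse movesets and that $\fstaircase{k}$ is self-conjugate. By \cref{obs:conjugateinvariance}, $(M,\lambda)$ is game-equivalent to $(M,\lambda')$, so I may assume without loss of generality that $r\geq\lambda_1$, i.e.\ $k=r$. The base case $k=1$ forces $\lambda=\br{1}$, which is terminal, giving $\SG_M(\lambda)=0=k-1$.

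For the upper bound $\SG_M(\lambda)\leq k-1$, I plan to first establish the auxiliary statement that $\lp(\DAG{M}{\lambda})\leq k-1$ for \emph{every} partition satisfying $\lambda\leq\fstaircase{k}$ (not only those with $k=\max(\lambda_1,r)$), and then invoke \cref{prop:SGdiam}. The argument will be a direct induction on $k$, whose crux is that every move $\lambda\to\mu$ contracts into a strictly smaller staircase: I will verify cell-wise that a vertical move $(i,0)$ with $i\geq 1$ yields $\mu\leq\fstaircase{k-i}$, a horizontal move $(0,j)$ with $j\geq 1$ yields $\mu\leq\fstaircase{k-j}$, and a Queen diagonal move $(i,i)$ yields $\mu\leq\fstaircase{k-2i}$. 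In each case $\mu\leq\fstaircase{k-1}$, so the induction step gives $\lp(\DAG{M}{\lambda})\leq 1+(k-2)=k-1$.

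For the lower bound $\SG_M(\lambda)\geq k-1$, I will exhibit, using only the vertical Rook moves, a subposition of each $\SG$-value in $\{0,1,\ldots,k-2\}$. Under the reduction $k=r$, the staircase bound on $\lambda$ gives $\lambda_{i+1}\leq r-i$ for every $i$, so the move $(i,0)$ with $i\in[r-1]$ produces $\lambda[i,0]$ having $r-i$ parts and first part at most $r-i$. Thus $\max((\lambda[i,0])_1,r-i)=r-i$, and the containment $\lambda[i,0]\leq\fstaircase{r-i}$ is immediate from $\lambda_{i+l}\leq r-i-l+1$. The induction hypothesis of the main statement then applies to $\lambda[i,0]$ and yields $\SG_M(\lambda[i,0])=r-i-1$. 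As $i$ varies over $[r-1]$ these values sweep out $\{0,1,\ldots,k-2\}$, giving $\SG_M(\lambda)\geq k-1$, and combining with the upper bound concludes the proof.

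The main subtlety is that the lemma's hypothesis $\lambda\leq\fstaircase{\max(\lambda_1,r)}$ is \emph{not} preserved by arbitrary moves: for instance, the horizontal move $(0,1)$ from $\br{3,3,1,1}$ produces $\br{2,2}$, which does not satisfy $\br{2,2}\leq\fstaircase{2}=\br{2,1}$, so the inductive equality cannot be applied to every reachable subposition directly. My remedy is to separate concerns---handling the upper bound through the looser $\lp$-bound, which only requires containment in \emph{some} $\fstaircase{k}$, and handling the lower bound by restricting attention to the vertical moves, which are precisely the ones whose images remain tight for the $k=r$ hypothesis obtained after conjugation.
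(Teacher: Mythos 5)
Your proposal is correct and follows essentially the same route as the paper's proof: reduce to $k=r$ by conjugate invariance, bound $\SG_M(\lambda)$ above by $\lp(\DAG{M}{\lambda})\le k-1$ via \cref{prop:SGdiam} (the paper gets this directly from \cref{obs:height} applied to $\lambda\le\fstaircase{k}$, where you re-derive it by a short induction), and obtain the matching lower bound by showing the vertical moves $\lambda[i,0]$, $i\in[k-1]$, realize every value in $\{0,\ldots,k-2\}$ by the inductive hypothesis. Your explicit discussion of why the lemma's hypothesis is not preserved under horizontal moves is a worthwhile clarification, but the underlying argument is the same.
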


\begin{proof}

Suppose without loss of generality that  $\lambda_1\leq r$ so that $k = r$.
We claim that $\lp(\DAG{M}{\lambda})=k-1$.
Indeed, $\lambda \leq \fstaircase{k} $ and \cref{obs:height} imply that  
\[\lp(\DAG{M}{\lambda}) \le \lp(\DAG{M}{\fstaircase{k}})=k-1.\]
Hence, \cref{prop:SGdiam} implies that $\SG_M(\lambda)\le \lp(\DAG{M}{\lambda}) \le k-1$. 
To establish the claim it suffices to show $\SG_M(\lambda)\ge  k-1$. We proceed by induction on $k$. 
For the base case, we have $\SG_M(\br{1}) = 0$. 
For the induction step, we have 
\begin{align*}
\SG_M(\lambda) &\ge \Mex\left(\{\SG_M(\lambda[i,0])\}_{1\le i\le k-1}\right)
= \Mex\{0,\ldots,k-2\} 
= k-1.  \qedhere
\end{align*}
\end{proof}

\section{Misère aspects of impartial chess} \label{sec:misere}

In this section we discuss two aspects of misère play for impartial chess games. In \cref{sec:trunc}, we show how misère play is equivalent to normal play on a ``truncated'' partition using the same moveset. In  \cref{sec:CG}, we use a classification scheme initiated by Conway \cite{Con76} and further developed by Gurvich and Ho \cite{GURVICH201854, Gur11} to study the interaction of normal play and mis\`ere play on impartial chess games.

\subsection{The truncation operation\label{sec:trunc}}

In this section we describe the behavior of misère variants of games studied in \cref{sec:impchess}. 
To this end, we make use of \emph{truncation}, introduced in \cite{Gottlieb2022LCTR}. This notion, reformulated in the setting of impartial chess games, has an important property -- the resulting game remains an impartial chess game played on modified partition, where the same moveset remains the same.
\begin{definition}[truncation] \label{def:truncation}
Let $(M,\lambda)$ be a non-terminal impartial chess game. Its \emph{truncation}  $\trunc_M(\lambda)$ is an impartial chess game $(M,\br{\bar \lambda_1,\ldots})$ where $\bar\lambda_i$ is the largest integer such that $(M,\lambda[i-1,\bar \lambda_{i}-1])$ is defined but not terminal.
\end{definition}
\noindent The truncation of a game need not be the smallest partition with the same game tree, as illustrated below. 
\begin{example}
$\trunc_\Knight(\br{5,5,5,3,3,2,1})=(\Knight,\br{4,3,2,1})$ which is partition-equivalent to $(\Knight,\br{3,3,2})$. 
\end{example}

\noindent The following proposition is easily verified. 
\begin{proposition}
    Let $(M,\lambda)$ be a non-terminal impartial chess game. Then $\trunc_M(\lambda[i,j])=\trunc_M(\lambda)[i,j]$.
\end{proposition}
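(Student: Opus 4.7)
The plan is to verify the identity row-by-row by unpacking both sides through the definition of truncation and the subpartition chaining identity \eqref{eq:chainsubpartition}. Write $\bar\lambda = \trunc_M(\lambda)$, so that by \cref{def:truncation} each $\bar\lambda_t$ is the largest integer for which $\lambda[t-1,\bar\lambda_t-1]$ is defined and non-terminal in $M$. Then the $k$-th part of $\bar\lambda[i,j] = \trunc_M(\lambda)[i,j]$ is $\bar\lambda_{i+k} - j$ (whenever this is positive).

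On the other side, set $\mu = \lambda[i,j]$ and $\bar\mu = \trunc_M(\mu)$. By definition, $\bar\mu_k$ is the largest integer such that $\mu[k-1,\bar\mu_k-1]$ is non-terminal. Applying \eqref{eq:chainsubpartition},
\[
\mu[k-1,\bar\mu_k-1] \;=\; \lambda[i+k-1,\, j + \bar\mu_k - 1],
\]
so $\bar\mu_k$ is the largest integer for which $\lambda[i+k-1,\, j+\bar\mu_k-1]$ is non-terminal. Substituting $t = j + \bar\mu_k$, the quantity $j+\bar\mu_k$ is the largest integer $t > j$ with $\lambda[i+k-1,t-1]$ non-terminal.

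It then remains to compare $j + \bar\mu_k$ with $\bar\lambda_{i+k}$, where the latter is characterized by the same maximality condition but without the constraint $t > j$. Clearly $j + \bar\mu_k \le \bar\lambda_{i+k}$. For the reverse inequality, observe that whenever the $k$-th part of $\bar\lambda[i,j]$ exists one has $\bar\lambda_{i+k} > j$, so $t = \bar\lambda_{i+k}$ itself meets the stronger condition $t > j$, forcing $\bar\lambda_{i+k} \le j + \bar\mu_k$. Hence $\bar\lambda_{i+k} - j = \bar\mu_k$, which is exactly the equality of $k$-th parts. A small additional check confirms that both $\bar\lambda[i,j]$ and $\bar\mu$ have the same number of parts: the $k$-th row exists on either side if and only if $\lambda[i+k-1,j]$ is defined and non-terminal, by the same shifting argument.

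I expect no serious obstacle beyond bookkeeping; the main subtlety is confirming that the two ``largest integer'' conditions coincide after the index shift $t \mapsto t - j$, which is resolved by noting that the a priori extra condition $t > j$ is automatically satisfied whenever $\trunc_M(\lambda)[i,j]$ has a corresponding row. The identity \eqref{eq:chainsubpartition} does essentially all of the work.
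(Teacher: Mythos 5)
Your argument is correct. Note that the paper offers no proof of this proposition at all (it is introduced with ``The following proposition is easily verified''), so there is no argument to compare against; your write-up supplies the missing verification along the natural route of unpacking \cref{def:truncation} and invoking the chaining identity \eqref{eq:chainsubpartition}, and the index bookkeeping (the two ``largest integer'' conditions agreeing after the shift $t\mapsto t-j$, with the extra constraint $t>j$ automatic whenever the corresponding row survives) is handled correctly. The only step I would make explicit is the monotonicity fact your ``same shifting argument'' leans on: if $\lambda[a,b]$ is well-defined and non-terminal and $b'\le b$, then $\lambda[a,b']$ is well-defined and non-terminal, since any move $(y,x)\in M$ witnessing non-terminality of $\lambda[a,b]$ also witnesses it for $\lambda[a,b']$. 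This is what guarantees that the set of admissible integers in the definition of each $\bar\lambda_t$ (and each $\bar\mu_k$) is an initial segment, so that ``largest integer such that'' behaves as your maximization argument requires, and it is exactly what justifies the claim that the $k$-th row exists on either side precisely when $\lambda[i+k-1,j]$ is defined and non-terminal.
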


\noindent Using our terminology,
\cite[Theorem 9]{Gottlieb2022LCTR} can be reformulated as follows.
\begin{theorem} \label{thm:truncation}
The $\P_M$-positions of any misère impartial chess game $(M,\lambda)$ exactly correspond to those of $\trunc_M(\lambda)$.
\end{theorem}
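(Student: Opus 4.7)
The plan is a direct structural induction, comparing the two games' $\P/\N$ labelings cell-by-cell. The key setup is the following identification: by \cref{def:truncation}, a cell $(i,j)$ belongs to $\trunc_M(\lambda)$ exactly when $(M,\lambda[i,j])$ is non-terminal, and the preceding proposition ensures $\trunc_M(\lambda[i,j]) = \trunc_M(\lambda)[i,j]$, so the identification commutes with taking subpositions. Under this correspondence, a move $(y,x) \in M$ from $\lambda[i,j]$ is available in $\trunc_M(\lambda)$ precisely when $\lambda[i+y,j+x]$ is non-terminal in $(M,\lambda)$; all other moves (those landing on a terminal subposition of $\lambda$) are stripped away by truncation.

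I would then induct on the length of the longest directed path from $(i,j)$ in $\DAG{M}{\lambda}$. The base case is a cell of path-length one, i.e., a position from which every move leads to a terminal subposition of $\lambda$. Under misère play on $(M,\lambda)$, all outgoing moves from such a cell land on terminal (hence misère $\N$) positions, so the cell is a misère $\P$-position. On the truncation side, every outgoing move has been removed, so the cell becomes terminal in $\trunc_M(\lambda)$ and is therefore a normal $\P$-position. The two labelings agree.

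For the inductive step, a non-pre-terminal cell $\lambda[i,j]$ has moves of two kinds: moves to terminal and moves to non-terminal subpositions. In misère play on $(M,\lambda)$, the terminal moves are automatically to $\N$, so $\lambda[i,j]\in\P$ iff every non-terminal move leads to a misère $\N$-position. In normal play on $(M,\trunc_M(\lambda))$, only the non-terminal moves remain, so $\lambda[i,j]\in\P$ there iff every such move leads to a normal $\N$-position of the truncation. By the induction hypothesis, misère $\N$-status in $(M,\lambda)$ and normal $\N$-status in $(M,\trunc_M(\lambda))$ coincide on all non-terminal subpositions reachable in one step, so the two $\P$-conditions reduce to the same constraint.

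The main obstacle is bookkeeping the two distinct fates of a ``terminal move'': it is automatically an $\N$-move under misère play but is entirely deleted under truncation. The whole argument rests on recognizing that these two mechanisms have exactly the same effect on the $\P/\N$ recursion; once the identification of cells and the commutation with subposition are in place, the induction goes through essentially mechanically.
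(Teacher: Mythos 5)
Your proof is correct, but it takes a different route from the paper: the paper does not prove \cref{thm:truncation} at all, instead presenting it as a reformulation of \cite[Theorem 9]{Gottlieb2022LCTR} and remarking that it can also be derived from Conway's mis\`ere Grundy numbers \cite{Con76}. Your argument is a self-contained structural induction, and it holds up: the identification of the cells of $\trunc_M(\lambda)$ with the non-terminal subpositions of $(M,\lambda)$ is exactly what \cref{def:truncation} gives (the set of such cells is downward-closed in both directions, so it is a Young diagram, and a move of $\trunc_M(\lambda)$ from $(i,j)$ exists precisely when the corresponding subposition $\lambda[i+y,j+x]$ is non-terminal); the induction on the longest path from $(i,j)$ in $\DAG{M}{\lambda}$ is well-founded by acyclicity; and the base and inductive steps correctly exploit that, under the paper's $\P/\N$-recursion, a terminal position is in $\N$ for mis\`ere play, so a move landing on a terminal subposition is irrelevant to the $\P$-test in exactly the same way as a deleted move is in the normal-play truncation. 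The one point worth making explicit is that the correspondence is only claimed for $\P$-positions, which is why the terminal subpositions of $(M,\lambda)$ (mis\`ere $\N$, absent from $\trunc_M(\lambda)$) cause no mismatch; the paper itself notes this caveat for $\N$-positions right after the theorem. What your approach buys is a proof from first principles inside this paper; what the paper's approach buys is brevity and consistency with the earlier LCTR treatment.
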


\cref{thm:truncation} can also be obtained by using “mis\`ere Grundy numbers” $\mathscr G^-$ as described in Conway~\cite{Con76}. We denote the mis\`ere Grundy number of an impartial chess game $(M, \lambda)$ by $\mathscr G^-_M(\lambda)$. 
The above theorem directly implies that 
$\N_M$-positions also coincide except on the deleted terminal positions of $(M, \lambda)$, as they do not exist in $\trunc_M(\lambda)$.
While the functions $\SG(\trunc(\cdot))$ and $\mathscr{G}^-(\cdot)$ have the same support (with the exception of terminal positions), they may have different values within the support. 

The truncation operation is particularly simple for the game of \textsc{Downright}, \textsc{King}, \textsc{Rook}, and \textsc{Queen}. 
The next corollary follows easily from \cref{def:truncation}.

\begin{corollary}
Let $(M,\lambda)$ be an impartial chess game with $\{(1,0),(0,1)\} \subseteq M$. 
Then $\trunc_M(\lambda)=(M,\lambda^{-})$. 
In particular, this holds for $\Q,\K,\R$ and $\Dr$.
\end{corollary}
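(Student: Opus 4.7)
My plan is to unfold \cref{def:truncation} row-by-row and verify that the $i$th part $\bar\lambda_i$ produced by $\trunc_M$ coincides with $\lambda_i^-$. The hypothesis $\{(0,1),(1,0)\}\subseteq M$ is used only to pin down which subpositions are terminal; the rest is bookkeeping on the parts of $\lambda$.

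First I would characterize the terminal positions of $(M,\mu)$ for an arbitrary partition $\mu$. Recall that $\mu[i',j']$ is well-defined iff $\mu_{i'+1}>j'$. Since $\{(0,1),(1,0)\}\subseteq M$, a necessary condition for terminality is that both $\mu[i',j'+1]$ and $\mu[i'+1,j']$ fail to be well-defined, which forces $\mu_{i'+1}=j'+1$ and $\mu_{i'+2}\le j'$. I would then argue that these two conditions are already sufficient regardless of what additional moves $M$ contains: for any $(y,x)\in M$ with $x\ge 1$ one has $\mu_{i'+y+1}\le\mu_{i'+1}=j'+1\le j'+x$, while for any $(y,0)\in M$ with $y\ge 1$ one has $\mu_{i'+y+1}\le\mu_{i'+2}\le j'$, so no move is legal. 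Consequently the terminal positions are exactly those $\mu[i',j']$ equal to $\br{1}$, i.e.\ the corners of $\mu$, independently of the rest of $M$.

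Applying this inside \cref{def:truncation}: the largest $\bar\lambda_i$ making $\lambda[i-1,\bar\lambda_i-1]$ well-defined is $\bar\lambda_i=\lambda_i$, and this value is acceptable iff the resulting position is not terminal, which, by the characterization above, happens iff $\lambda_{i+1}=\lambda_i$ (using the convention $\lambda_{r+1}=0$). Otherwise the largest acceptable value is $\bar\lambda_i=\lambda_i-1$. Hence
\[
\bar\lambda_i=\begin{cases}\lambda_i-1 & \text{if } i=r \text{ or } \lambda_i>\lambda_{i+1},\\ \lambda_i & \text{otherwise,}\end{cases}
\]
which is exactly the piecewise definition of $\lambda_i^-$. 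The edge case $\lambda_r=1$ forces $\bar\lambda_r=0$ and row $r$ is dropped from $\trunc_M(\lambda)$, matching the convention that a trailing zero part is discarded in $\lambda^-$. The final sentence of the corollary is immediate since each of $\Q,\K,\R,\Dr$ contains $\{(0,1),(1,0)\}$.

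The only step requiring any care is the second one: verifying that no additional move in $M$ can rescue a candidate corner from being terminal. Once that is in hand, matching $\bar\lambda_i$ with $\lambda_i^-$ is a direct comparison of two piecewise expressions.
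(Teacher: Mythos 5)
Your proof is correct and fills in exactly the verification the paper leaves implicit (the paper simply remarks that the corollary ``follows easily'' from \cref{def:truncation}): you identify the terminal subpositions as the corners whenever $\{(0,1),(1,0)\}\subseteq M$ and then match the row-wise maxima $\bar\lambda_i$ against the piecewise definition of $\lambda_i^-$, including the dropped trailing part. This is the intended argument, and your care in checking that no extra move of $M$ can rescue a corner from terminality is exactly the one point worth writing down.
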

The situation for {\sc Knight} and {\sc Pawn} is different because these games do not allow all of the moves of \textsc{Downright}. 
In fact, if $\lambda=\br{\lambda_1,\ldots,\lambda_r}$, then $(\Knight,\lambda)$ is terminal precisely when $\lambda_2\leq 2$ and $\lambda_3\leq 1$. 
Let $\trunc_\Knight(\lambda) = (\Knight, \mu)$. 
If $\lambda_2> 2$ or $\lambda_3>1$, then 
$\mu < \lambda^-$ since more terminal positions are removed. 
Let $\trunc_\Pawn(\lambda) = (\Pawn, \bar \mu)$. Then $(\Pawn,\lambda)$ is terminal precisely when $r=1$. Thus, if $r>1$, then $\bar \mu \leq  \lambda^{-}$ and if $\lambda$ has a repeated entry then the previous inequality is strict.

\subsection{Conway-Gurvich-Ho classification}\label{sec:CG}
Conway~\cite{Con76} introduced the notion of a tame game.  Gurvich and Ho~\cite{GURVICH201854} extended this idea to a more general classification of games. Their definitions, included below for convenience, involve pairs $\C_M(\lambda) = (\SG_M(\lambda), \mathscr G_M^-(\lambda))$, which we refer to as \emph{Conway pairs}. We sometimes omit the subscript when the game is clear from context. If $\C_M(\lambda) \in \{(0, 1), (1, 0)\}$ then $(M, \lambda)$ is called \emph{swap}. If $k \geq 0$ and $\C_M(\lambda) = (k, k)$, then $(M, \lambda)$ is called \emph{symmetric}.

\begin{definition} \label{D.DTP}
A impartial game is called \begin{enumerate} 
\item {\em returnable} if for any move from a $(0,1)$-position (resp.,~a $(1,0)$-position) to a non-terminal position $y$, there is a move from $y$ to a $(0,1)$-position (resp.,~to a $(1,0)$-position).
\item {\em forced} if each move from a $(0,1)$-position results in a $(1,0)$-position and vice versa;
\item {\em domestic} if it has neither $(0,k)$-positions nor $(k,0)$-positions with $k \geq 2$;
\item {\em tame} if it has only $(0,1)$-positions, $(1,0)$-positions, and $(k,k)$-positions with $k \geq 0$;
\item  {\em miserable} if for every position $x$, one of the following holds: (i) $x$ is a $(0, 1)$-position or $(1, 0)$-position, or (ii) there is no move from $x$ to a $(0,1)$-position or $(1, 0)$-position, or (iii) there are moves from $x$ to both a $(0, 1)$-position and a $(1, 0)$-position. 
\item {\em pet} if it has only $(0,1)$-positions, $(1,0)$-positions, and $(k,k)$-positions with $k \geq 2$.
\end{enumerate}
\end{definition}
The relationships between these properties were explored in in \cite{GURVICH201854} and are illustrated in \cref{fig:class}. The \emph{Conway-Gurvich-Ho (CGH) classification} of an impartial game is determined by which of these properties the game has. 

The CGH classification of \textsc{Nim}, \textsc{Subtraction}, \textsc{Wythoff}, \textsc{Mark}, \textsc{LCTR}, and $\Dr$ are known. \textsc{Nim} was shown to be forced and miserable but not pet in \cite{Con76}. \textsc{Subtraction} was shown to be pet and returnable but not forced, and \textsc{Wythoff} was shown to be miserable and returnable, but neither pet nor forced, in \cite{GURVICH201854}. \textsc{LCTR} and $\Dr$ were shown to be domestic but not tame and returnable but not forced in \cite{Gottlieb2022LCTR}. \textsc{Mark} was shown not to be domestic in \cite{GURVICH201854} and is easily shown to be not returnable. Other games are classified in \cite{GURVICH201854}. 

Some of the families of games we study occupy the same region as one of the above games. One family occupies a region not previously known to contain any game; another occupies a region that was only known to contain a single small game. 

In determining the CGH classification for a family of impartial chess games, in most cases we state that the family has one or more properties, and also that it does not have one or more properties. In making the former statement, we assert that the property holds for all members of the family. In the making the latter statement, we assert that there exist family members for which the given property does not hold. 

Let $P\subseteq \Y^+$ and $M$ be an impartial chess moveset. When we state that $(M,P)$ is returnable, we mean that each $(M,\lambda)$ is returnable, where $\lambda \in P$. When we say $(M,P)$ is not returnable, we mean that there exists $\lambda \in P$ such that $(M,\lambda)$ is not returnable.
The same applies to pet, miserable, tame, domestic, and forced. 
Let $G_1$ be a set of impartial games and let $G_2 \subseteq G_1$. If every game in $G_1$ is pet, then so is every game in $G_2$. The same applies to miserable, tame, domestic, forced, and returnable. The following observation is a special case.
\begin{observation}\label{obs:subsetproperty}
Let $P_1\subseteq \Y^+$ and let $ P_2\subseteq P_1$ and let $M$ be an impartial chess moveset. If $\left(M,P_1 \right)$ is pet, then so is $(M,P_2)$. The same applies to miserable, tame, domestic, forced, and returnable. 
\end{observation}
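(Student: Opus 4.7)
The plan is to unwind the definition by pointing out that this is a direct logical consequence of the convention fixed immediately before the observation. By that convention, the statement \textquotedblleft$(M,P_1)$ has property X\textquotedblright{} (for X one of pet, miserable, tame, domestic, forced, returnable) is a universal statement: it means that for every $\lambda \in P_1$, the individual impartial chess game $(M,\lambda)$ has property X. These six properties are properties of a single game (each quantifies over the positions in the game tree of $(M,\lambda)$), so no global hypothesis about $P_1$ is being used beyond the pointwise assertion.

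Given $P_2 \subseteq P_1$, any $\mu \in P_2$ also lies in $P_1$, so $(M,\mu)$ has property X by assumption. Since this holds for every $\mu \in P_2$, the game family $(M,P_2)$ has property X, by the same convention applied in the other direction. I would write the proof for one property explicitly (say, pet), and then remark that the argument is identical for the other five since all of them are defined at the level of individual games.

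The only potential subtlety to flag, but which does not cause any real obstacle, is that one might worry whether property X for $(M,\lambda)$ could depend on positions outside the game tree of $(M,\lambda)$ (for instance, positions reachable only from larger partitions in $P_1$). A glance at \cref{D.DTP} confirms this is not the case: each property is phrased purely in terms of moves and Conway pairs within the single game rooted at $\lambda$. Hence the observation is immediate, and no induction, case analysis, or auxiliary lemma is required.
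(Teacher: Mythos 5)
Your proposal is correct and matches the paper's own treatment: the paper states just before the observation that a family $(M,P)$ has a property exactly when every individual game $(M,\lambda)$ with $\lambda\in P$ has it, and notes the observation is a special case of the trivial fact that a universally quantified property passes to subsets. Your extra check that each property in \cref{D.DTP} is intrinsic to a single game tree is a sensible (if unstated in the paper) confirmation that nothing more is needed.
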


Movesets in which no set of partitions are specified are played on $\Y^+$, e.g., $\Pawn = \left(\Pawn, \Y^+\right)$.
In this paper we will mostly be using sets $P$ which are closed under taking subpartitions\footnote{The only exceptions are the families mentioned in \cref{thm:tameNotMisQueen} and \cref{thm:tameNotMisRook}.}.
In particular, let 
\begin{align*}
    \genstaircase &=\left\{\fgenstaircase rck[i,j] :  r,c,k \in \mathbb Z^{>0}, i,j\in \mathbb Z^{\geq 0}\right\},\\
    \rectangle &=\left\{\frectangle{r}{c} : r,c \in \mathbb Z^{>0}\right\}, \mbox{ and} \\
    \staircase &=\left\{\fstaircase{k} : k \in \mathbb Z^{>0}\right\}.
\end{align*}

Several games are omitted from \cref{fig:class}. For example, by \cref{thm:petreturnable}, $\Pawn$ and $(\Pawn,\staircase)$ are pet and returnable, but not forced. Together with  \cref{obs:subsetproperty} and 
$\staircase\subseteq\genstaircase\subseteq \Y^+$, we see that $(\Pawn, \genstaircase)$ is also pet and returnable, but not forced. For this reason, $(\Pawn,\genstaircase)$ is omitted.
We also omit the results for hooks $\fgama{i}{j}$ and $\lambda$-staircases $S^k_\lambda$, as the former are pet and forced for every moveset studied in this paper, and the latter set is same as $\Y^+$, whose results are listed. 

Throughout the next five subsections, we classify  impartial chess games into respective regions in the same order of appearance as on \cref{fig:class}, where all our results are summarized.

\begin{table}[p]
    \centering
    $\begin{array}{|c|c|c|c|c|}
    \hline
         & \Y^+ & \fgenstaircase{r}{c}{k}  & \frectangle{r}{c} & \fstaircase{k}  \\ \hline
         \Dr & \mathsf R \cap \mathsf D & \mathsf P \cap \mathsf R  & \mathsf P \cap \mathsf F & \mathsf P \cap \mathsf F \\
         \Pawn & \mathsf P\cap \mathsf R & \mathsf P\cap \mathsf R & \mathsf P\cap \mathsf F & \mathsf P \cap \mathsf R\\
        \Knight & \mathsf D\cap \mathsf R & \mathsf D\cap \mathsf R & \mathsf P\cap \mathsf R & \mathsf P\cap \mathsf F  \\
        \R & \mathsf N \cap \mathsf{N'} & \mathsf F\cap \mathsf M & \mathsf F\cap \mathsf M & \mathsf P\cap \mathsf F\\
        \Q & \mathsf N \cap \mathsf{N'} & \mathsf R\cap \mathsf M & \mathsf R\cap \mathsf M & \mathsf P\cap \mathsf F\\
        \Bishop & \mathsf P \cap \mathsf F & \mathsf P \cap \mathsf F & \mathsf P \cap \mathsf F & \mathsf P \cap \mathsf F\\
        \K & \mathsf N \cap \mathsf{N'} & \mathsf N\cap \mathsf{N'} & \mathsf F\cap \mathsf M & \mathsf P\cap \mathsf R \\ \hline
    \end{array}$
    \caption{
    We partition the set of impartial chess games in two ways. The first is into sets $\mathsf{P,M,T,D,N}$  as follows: $\mathsf{P}$ are games which are pet; $\mathsf{M}$ are games which are miserable but not pet; $\mathsf{T}$ are games which are tame but not miserable; $\mathsf{D}$ are games which are domestic but not tame; $\mathsf{N}$ are games which are not domestic. 
    The second is into sets
     $\mathsf{F,R,N'}$, as follows: $\mathsf{F}$ are games which are forced; $\mathsf{R}$ are games which are returnable but not forced; $\mathsf{N'}$ are games which are not returnable.
    }
    \label{tab:class}
\end{table}

\begin{figure}[p]
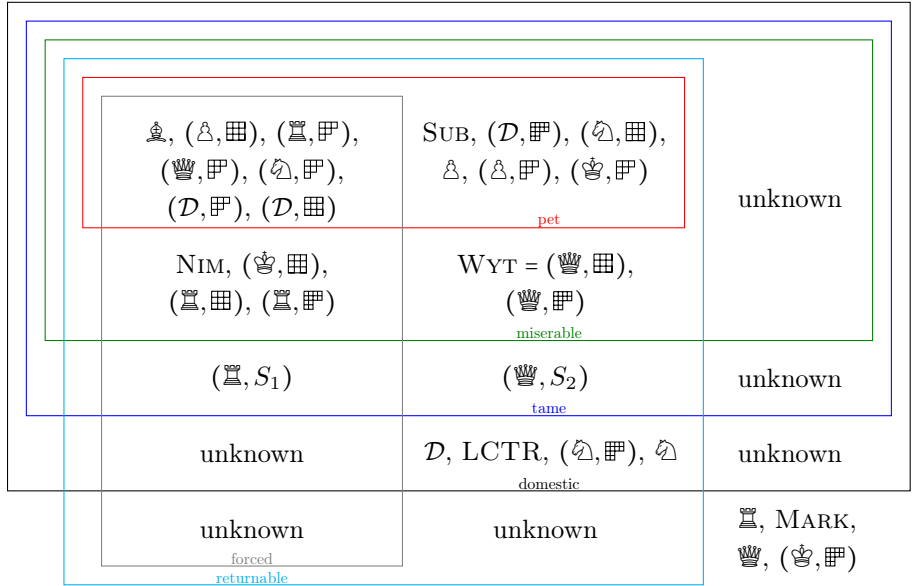

    \centering
    \class
    \caption[fragile]{
    The regions of the CGH classification, together with some
    other already classified games.
    Regions not known by us to be occupied by any game are labeled “unknown”. 
    To our knowledge, our examples of tame but not miserable games are the first infinite families of games of this type, 
    although a single such a game was identified in \cite[Figure 7]{GURVICH201854}.
    We omit games such as 
    $(\Pawn, \genstaircase)$, $(\Bishop, \staircase)$, $(\Bishop, \rectangle)$, $(\Bishop, \genstaircase)$ and $\K$
    as their classifications can be easily derived from \cref{obs:subsetproperty}.
    }
    \label{fig:class}
    
\end{figure}

\subsubsection{Pet games}
Recall that pet games allow only for swap positions and $(k,k)$-positions with $k\ge2$.
\begin{theorem}\label{thm:petforced}
    $\Bishop,(\Pawn,\rectangle ),(\R,\staircase ), (\Queen,\staircase )$, and $(\Knight,\staircase )$
    are pet and forced.
\end{theorem}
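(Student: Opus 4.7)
The plan is to treat the five games separately, in each case computing the Conway pair $\C_M(\lambda)=(\SG_M(\lambda),\mathscr G^-_M(\lambda))$ for every position in the game and verifying that it is either a swap pair ($(0,1)$ or $(1,0)$) or a symmetric pair $(k,k)$ with $k\geq 2$; this establishes pet, and forced then follows by inspection of the (essentially unique) outgoing moves from swap positions.

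For $\Bishop$, each move $(k,k)$ reduces the Durfee length $\D(\lambda)$ by exactly $k$ and every value in $\{1,\ldots,\D(\lambda)-1\}$ is attained, so induction on $\D(\lambda)$ gives $\SG_\Bishop(\lambda)=\D(\lambda)-1$ together with Conway pairs $(0,1)$, $(1,0)$ and $(\D(\lambda)-1,\D(\lambda)-1)$ for $\D(\lambda)=1$, $\D(\lambda)=2$ and $\D(\lambda)\geq 3$ respectively. For $(\Pawn,\rectangle)$, the (at most two) available moves from $\frectangle{r}{c}$ produce $\frectangle{r}{c-1}$ and $\frectangle{r-1}{c-1}$, so an induction on $c$, with base $c=1$ terminal, shows that $\C_\Pawn(\frectangle{r}{c})=(0,1)$ when $c$ is odd and $(1,0)$ when $c$ is even: the step is a one-line $\Mex$ computation since both available moves decrement $c$ by exactly one.

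For the three staircase games I would exploit the identities $\fstaircase{k}[i,0]=\fstaircase{k}[0,i]=\fstaircase{k-i}$ and $\fstaircase{k}[i,i]=\fstaircase{k-2i}$, which show that from $\fstaircase{k}$ the Rook and Queen movesets together reach precisely $\{\fstaircase{k'} : 1\leq k' < k\}$, so both $(\R,\staircase)$ and $(\Q,\staircase)$ have the same game tree as single-pile Nim of size $k-1$, and their Conway pairs coincide with those obtained for $\Bishop$ above. For $(\Knight,\staircase)$, one similarly checks $\fstaircase{k}[1,2]=\fstaircase{k}[2,1]=\fstaircase{k-3}$, so every non-terminal staircase has the unique subposition $\fstaircase{k-3}$ reached by both Knight moves; a three-step induction (with $k\in\{1,2,3\}$ terminal) then yields Conway pair $(0,1)$ for $k\in\{1,2,3\}$, followed by $(1,0),(0,1),(1,0),\ldots$ on successive blocks of three consecutive values of $k$.

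The main obstacle is the small-case bookkeeping: in each game the base positions (Durfee length $1$ or $2$, rectangles with $c=1$, staircases with $k\leq 3$) must carry swap pairs rather than symmetric ones, so that the induction produces exactly the swap-then-symmetric pattern required for pet; once those small cases are secured, every inductive step collapses to a $\Mex$ on a one- or two-element set, and the forced condition reduces to the observation that the unique relevant move from a $(1,0)$-position always lands in a $(0,1)$-position.
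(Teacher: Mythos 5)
Your proposal is correct and follows essentially the same route as the paper: compute the Conway pair $(\SG_M(\lambda),\mathscr G^-_M(\lambda))$ for every position in each family, observe that only swap pairs and symmetric pairs $(k,k)$ with $k\geq 2$ occur, and check forcedness by noting that the unique move out of a non-terminal swap position lands on the other swap pair. The paper writes out only the $(\Queen,\staircase)$ case (via \cref{lem:queenrook}, which gives $\SG_\Queen(\fstaircase{k})=k-1$) and declares the others similar, whereas you supply the analogous structural facts explicitly (Durfee-length reduction for $\Bishop$, the column decrement for $(\Pawn,\rectangle)$, and the staircase subposition identities for the $\R$, $\Queen$, and $\Knight$ cases), all of which check out.
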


\begin{proof} We prove the result for $(\Queen, \staircase)$; the others are similar. 
   By \cref{lem:queenrook} we have $\SG_\Queen\left(\fstaircase{k} \right) =k-1$. There is a similar formula for $\mathscr G^-_\Queen\left( \fstaircase{k} \right)$ from which it follows that
   \[\C_\Queen\left( \fstaircase{k}\right)=\begin{cases}
       (0,1) & \textnormal{if } k=1 \\
       (1,0) & \textnormal{if } k=2 \\
       (k-1,k-1) & \textnormal{if } k>2. \\
   \end{cases}\]
   Since every subpartition of a staircase is a smaller staircase, it follows that $(\Queen,\staircase)$ is pet. The only non-terminal swap position is $\left( \Queen,\fstaircase{2} \right)$ and it can only move to $\left( \Queen,\fstaircase{1} \right)$, so $(\Queen,\staircase)$ is forced.
\end{proof}

\begin{theorem}\label{thm:petreturnable}
$\Pawn,(\Knight,\rectangle )$, $(\K,\staircase)$, $(\Pawn,\staircase)$, and $(\Dr,\genstaircase)$
are pet and returnable but not forced.
\end{theorem}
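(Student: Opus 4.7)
The plan is to verify three properties for each game in the list: pet (every Conway pair $\C_M(\lambda)$ lies in $\{(0,1),(1,0)\}\cup\{(k,k):k\ge 2\}$), returnable, and not forced, via an explicit swap position whose move is not a swap of the opposite type.

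For $(\K,\staircase)$ and $(\Pawn,\staircase)$, both games share the same game tree on staircases: from $\fstaircase k$ the only reachable subpositions are $\fstaircase{k-1}$ (when $k\ge 2$) and $\fstaircase{k-2}$ (when $k\ge 3$); all three $\K$-moves and both $\Pawn$-moves realize these. A direct induction on $k$ yields $\SG_M(\fstaircase k)\equiv k-1\pmod 3$, while $\mathscr G^-_M(\fstaircase k)$ follows a parallel period-$3$ pattern $1,0,2,1,0,2,\dots$ indexed by $k\bmod 3$. Hence $\C_M(\fstaircase k)$ cycles through $(0,1),(1,0),(2,2)$, proving pet. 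Returnability is immediate from the cycle, and $\fstaircase 4$ is a $(0,1)$-position whose move to the $(2,2)$-position $\fstaircase 3$ is not a swap, witnessing not-forced.

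For $(\Knight,\rectangle)$, each $\frectangle rc$ has at most two children, namely $\frectangle{r-1}{c-2}$ and $\frectangle{r-2}{c-1}$, so Mex computations return values in $\{0,1,2\}$. The plan is to induct on $r+c$, with terminal rectangles ($1\times c$, $c\times 1$, and $2\times 2$) as base cases. Under the inductive hypothesis that every proper child is pet, a case analysis over the unordered multisets of children's Conway pairs drawn from $\{(0,1),(1,0),(2,2)\}$ shows the parent pair is always pet, except in the single bad case of two $(2,2)$-children, which would produce the non-pet $(0,0)$-parent. Ruling out this case is the main obstacle. I plan to strengthen the inductive hypothesis with a companion claim: whenever $\frectangle rc$ is $(2,2)$, exactly one of its children is $(0,1)$ and the other is $(1,0)$ (in particular, neither is $(2,2)$); the argument uses that $\SG=2$ forces the children's $\SG$-values to be exactly $\{0,1\}$, which under pet IH uniquely determines the child classes. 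Combined with an analysis of the shared grandchild $\frectangle{r-3}{c-3}$ that would arise if both $\frectangle{r-1}{c-2}$ and $\frectangle{r-2}{c-1}$ were $(2,2)$, this should produce the required contradiction. Returnability then follows from the strengthened claim, and $\frectangle 47$ is a $(0,1)$-position with a non-swap move to the $(2,2)$-position $\frectangle 35$, witnessing not-forced.

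For $\Pawn$ on $\Y^+$, the game-equivalence $(\Pawn,\lambda)\equiv(\Dr,\varphi_\Pawn(\lambda))$ from Section~\ref{sec:pawn} reduces the classification of $\Pawn$ to that of $\Dr$ restricted to the image of $\varphi_\Pawn$. A structural induction on $|\lambda|$, in spirit parallel to the $(\Knight,\rectangle)$ argument, then yields pet on this restricted class; the forbidden subpartitions $\br 2$ and $\br{1,1}$ are precisely those that permit non-pet Conway pairs in $\Dr$ on general $\Y^+$. For $(\Dr,\genstaircase)$, subpositions of $\gs rck$ retain enough stairstep regularity that an induction on $rk$, combined with the existing $(\Dr,\Y^+)$ classification from \cite{Gottlieb2022LCTR}, should establish pet. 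Returnability in both families follows from the respective inductions, and not-forced witnesses come from small positions such as $\gs 114 = \fstaircase 4$. The chief difficulties are the case analysis for $(\Knight,\rectangle)$ and the combinatorics of tracking the $\Dr$ move pair on irregularly shaped subpositions of generalized staircases.
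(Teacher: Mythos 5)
Your treatment of the staircase families is correct and complete: the periodic Sprague--Grundy and mis\`ere-Grundy sequences do give Conway pairs cycling through $(0,1),(1,0),(2,2)$, which yields pet, returnable, and (via $\fstaircase 4$) not forced for $(\K,\staircase)$ and $(\Pawn,\staircase)$. Your framing via the near-closure of $\{(0,1),(1,0),(2,2)\}$ under the coordinatewise $\Mex$ is also exactly the paper's key observation. However, for the three harder families the crucial steps are deferred rather than proved. For $(\Knight,\rectangle)$ your strengthened induction is plausible, but the step you yourself flag as ``the main obstacle'' --- ruling out two $(2,2)$-children --- is not actually closed by the shared-grandchild observation alone: if both $\frectangle{r-1}{c-2}$ and $\frectangle{r-2}{c-1}$ were $(2,2)$, the companion claim only forces $\frectangle{r-3}{c-3}$ to be one swap type and $\frectangle{r-2}{c-4}$, $\frectangle{r-4}{c-2}$ the other, which is internally consistent; a contradiction requires digging at least one more level. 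For $(\Dr,\genstaircase)$, invoking the known classification of $(\Dr,\Y^+)$ (domestic but not tame) buys you only the exclusion of $(0,2)$ and $(2,0)$; it does nothing to exclude $(0,0)$, $(1,1)$, $(1,2)$, $(2,1)$, and excluding these is precisely the content of the theorem. The paper's proof does this by a delicate local analysis around a hypothetical $(0,0)$-position $\lambda[i,j]$ (forcing $\lambda[i+1,j]$ and $\lambda[i,j+1]$ to be $(2,2)$, then propagating constraints to $\lambda[i+1,j+2]$, $\lambda[i+2,j+1]$, etc.\ until the pair of $\lambda[i+1,j+1]$ is overdetermined); nothing in your sketch substitutes for this. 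The $\Pawn$ case is similarly left at the level of a plan.

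There is also one concrete error: $\gs 114=\fstaircase 4$ does \emph{not} witness that $(\Dr,\genstaircase)$ is not forced. Under $\Dr$ both moves from $\fstaircase k$ lead to $\fstaircase{k-1}$, so $\C_\Dr(\fstaircase k)$ alternates between $(0,1)$ and $(1,0)$ and $(\Dr,\staircase)$ is forced (as recorded in the paper's table). A correct witness is the paper's $\fgenstaircase 122=\br{4,2}$, a $(0,1)$-position with a move to the $(2,2)$-position $\br{3,1}$. (Your witness $\fstaircase 4$ is fine for $\Pawn$, and $\frectangle 47$ is a valid alternative to the paper's $\frectangle 65$ for $(\Knight,\rectangle)$.) In summary: same general strategy as the paper for the easy cases, but the hard cases contain genuine gaps, and one witness must be replaced.
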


\begin{proof}
The games  $(\Pawn, \br{2,2,2,1})$, $\left(\Knight,\frectangle 65\right)$, $\left(\K,\fstaircase 4\right)$, $\left(\Pawn,\fstaircase 4\right)$ and$\left(\Dr,\fgenstaircase 122\right)$ show that the respective sets of games to which they belong are not forced. We show that $\left(\Dr, \fgenstaircase rck\right)$ is returnable and pet; the others are similar. 
    
For any subpartition $\lambda$ of $\genstaircase$, we have $\C_\Dr(\lambda)\in A$ where $A=\{(0,1),(1,0),(2,2)\}$. If $(x_1,x_2),(y_1,y_2)\in A$ then $(\Mex(x_1,y_1),\Mex(x_2,y_2))\in A\cup\{(0,0)\}$.
    Thus, it is enough to show that there exists no $(0,0)$-position reachable from $\lambda$. 
    
    Suppose not, and let $\lambda[i,j]$ be a $(0,0)$-position. Then $\lambda[i+1,j]$ and $\lambda[i,j+1]$ are both $(2,2)$-positions. 
    This implies that $\C_\Dr(\lambda[i+2,j])=\C_\Dr(\lambda[i,j+2])$, while $\{\C_\Dr(\lambda[i+2,j]),\C_\Dr(\lambda[i+1,j+1])\}=\{(1,0),(0,1)\}$. We treat the case when $\lambda[i+1,j+1]$ is a $(0,1)$-position. The case when it is a $(1,0)$-position is similar to, and simpler than, the $(0, 1)$-case and is left to the reader.  

Observe that $\lambda[i+1,j+1]$ is not a terminal position.  If it were, then $\lambda= \fgenstaircase 11k=\fstaircase{k}$. Also, $\lambda[i,j+3]$ would be well defined unlike $\lambda[i+1,j+2]$, a contradiction.

        Since $\lambda[i+1,j+1]$ is not a terminal position assume without loss of generality that $\lambda[i+1,j+2]$ is well-defined, which implies, due to $\C_\Dr(\lambda[i,j+2])=(1,0)$, that $\C_\Dr(\lambda[i+1,j+2])=(2,2)$.
        This in turn implies that both $\lambda[i+2,j+2]$ and $\lambda[i+3,j+1]$ are well-defined. Due to $\C_\Dr(\lambda[i+2,j])=(1,0)$, we again obtain $\C_\Dr(\lambda[i+2,j+1])=(2,2)$.
        Then $\C_\Dr(\lambda[i+1,j+1])=(0,1)$ contradicts   
        \begin{align*}
\C_\Dr(\lambda[i+1,j+1])&=\Mex(\C_\Dr(\lambda[i+2,j+1]),\C_\Dr(\lambda[i+1,j+2]))\\
&=\Mex((2,2),(2,2))=(0,0).
        \end{align*}

Since the only Conway pairs possible are $(0,1),(1,0),$ and $(2,2)$, $(\Dr, \genstaircase)$ is pet by definition. Furthermore, there must be a move from any non-terminal $(0,1)$-position to a $(1,0)$-position and vice versa. This implies returnability. 
\end{proof}

\subsubsection{Miserable but not pet games}
Recall that miserable games require any non-swap position to have moves to both or neither of the swap positions.
\begin{theorem} \label{thm:rookrect}
$(\R,\rectangle)$, $(\K,\rectangle)$, and $(\R, \genstaircase)$
    are forced and miserable but not pet.
\end{theorem}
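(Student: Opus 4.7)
The plan is, for each of the three games, to: (a) identify all swap positions (i.e., those with Conway pair $(0,1)$ or $(1,0)$); (b) verify \emph{forced} by checking that every move from a non-terminal swap lands on the opposite swap type; (c) verify \emph{miserable} by case analysis on non-swap positions, establishing either condition (ii) (no swap reachable) or condition (iii) (both swap types reachable) of \cref{D.DTP}; and (d) exhibit a non-pet Conway pair. Throughout I exploit \cref{obs:subsetproperty}: since $\rectangle \subseteq \genstaircase$, a non-pet witness for $(\R, \rectangle)$ automatically gives one for $(\R, \genstaircase)$.

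For $(\R, \rectangle)$, \cref{obs:2pilenim} identifies the game with 2-pile Nim. A direct computation on small rectangles yields that the swap positions are exactly $\rectangle_{1,1}, \rectangle_{1,2}, \rectangle_{2,1}, \rectangle_{2,2}$ with pairs $(0,1), (1,0), (1,0), (0,1)$ respectively. Forced is then immediate by inspection. For miserable, I split into $r, c \ge 3$ (all Nim moves preserve both dimensions at $\ge 2$, so no swap is reachable) and $\min(r,c) \le 2$ with $\max(r,c) \ge 3$ (suitable moves reach both $\rectangle_{*, 1}$- and $\rectangle_{1, *}$-style swaps of opposite type). Not pet is witnessed by $\C_\R(\rectangle_{3,3}) = (0,0)$, verified by direct misère Grundy computation. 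For $(\K, \rectangle)$, \cref{thm:SGKingrectangle} supplies the $\SG$-values, and a parallel induction on $r + c$ computes the $\mathscr G^-$-values; the swap positions turn out to be exactly the 1-row rectangles $\rectangle_{1,c}$ and 1-column rectangles $\rectangle_{r,1}$, with swap type alternating by parity. Forced is immediate since the unique move from $\rectangle_{1,c}$ is to $\rectangle_{1,c-1}$ of opposite parity. Miserable splits analogously: $r, c \ge 3$ gives condition (ii) as the three moves $(0,1), (1,0), (1,1)$ all land in rectangles of both dimensions $\ge 2$; while $\min(r,c) = 2$ gives condition (iii) as the $(0,1)$- and $(1,0)$-moves land in two consecutive $\rectangle_{1,*}$ or $\rectangle_{*,1}$ of opposite parities. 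Not pet is again witnessed by $\rectangle_{3,3}$ with pair $(0,0)$.

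For $(\R, \genstaircase)$, not pet is inherited from the rectangle case via \cref{obs:subsetproperty}. I claim the swap positions in $\genstaircase$ are exactly the four rectangles above together with $\fstaircase{2} = \br{2,1}$ (pair $(1,0)$); this can be verified by showing that every other subpartition of a generalized staircase has $\SG_\R \ge 2$, using the Sprague-Grundy lower bound of \cref{lem:queenrook} together with the structural description below. Forced reduces to the only new case $\br{2,1}$, whose two moves both yield the terminal $\br{1}$ of pair $(0,1)$. The main obstacle is verifying miserable for the non-rectangular subpartitions $\fgenstaircase{r}{c}{k}[i, j]$. Using their explicit description as ``truncated generalized staircases'' with $k - \lfloor i/r \rfloor - \lfloor j/c \rfloor$ steps, step heights of $r$ (save possibly the top step, which has $r - (i \bmod r)$ rows), and step widths of the form $c - (j \bmod c), 2c - (j \bmod c), \ldots$, one splits into cases: when all remaining step widths and heights are $\ge 3$, every $(0,j')$- or $(i',0)$-Rook move lands in another such truncated staircase with the same property, so no swap is reachable (condition (ii)); otherwise, suitable $(0, j')$ and $(i', 0)$ moves simultaneously produce a $\rectangle_{2,2}$- or $\br{2}$-type position and a $\rectangle_{1,2}$- or $\br{1,1}$-type position, realizing both swap types (condition (iii)). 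The detailed case analysis forms the bulk of the argument but each case is routine given the structural description.
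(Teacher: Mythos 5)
Your treatment of the two rectangle families is sound and takes a different route from the paper: you classify the swap positions explicitly (via misère Nim theory for $\R$, via a parallel induction on $r+c$ for $\K$) and then check reachability case by case, whereas the paper never pins down the Conway pairs of individual positions; it instead picks a non-symmetric position $\lambda[i,j]$ with $i+j$ maximal and derives a contradiction, concluding that everything outside the $2\times 2$ corner block is symmetric. Your version is more explicit (and, unlike the paper's, actually spells out the $(\K,\rectangle)$ case), at the cost of needing the full misère-value classification rather than just the symmetric/swap dichotomy.

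The $(\R,\genstaircase)$ part, however, has two concrete errors. First, your proposed justification of the swap classification --- ``every other subpartition of a generalized staircase has $\SG_\R\ge 2$'' --- is false: $\frectangle{3}{3}$ has $\SG_\R=2\oplus 2=0$ and $\frectangle{3}{4}$ has $\SG_\R=2\oplus 3=1$, and both are subpartitions of generalized staircases not on your list of five. (The correct statement is that they are \emph{symmetric}, i.e.\ $(k,k)$-positions, not that $k\ge 2$.) Moreover, \cref{lem:queenrook} computes an exact value only for partitions \emph{contained in} a staircase $\fstaircase{k}$; it is not a lower bound applicable to arbitrary subpartitions of $\fgenstaircase{r}{c}{k}$. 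Second, in the miserable analysis your condition-(ii) case asserts that when all step widths and heights are at least $3$, every Rook move lands in another position with that property; this fails, e.g.\ the move $(0,2)$ from $\fgenstaircase{3}{3}{2}=\br{6^3,3^3}$ yields $\br{4^3,1^3}$, whose lower step has width $1$. So your case split neither establishes the swap classification (needed for forcedness) nor closes the reachability analysis for miserability. The paper's maximality argument sidesteps both issues: it shows directly that every non-rectangular subposition $\lambda[i,j]$ with $i+j$ maximal among non-symmetric ones has all its options symmetric or containing both swap types, hence is itself symmetric --- a contradiction that needs no Sprague--Grundy lower bounds and no structural control of step widths under moves. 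To repair your argument you would need either to adopt that induction or to prove the symmetric/swap dichotomy for truncated generalized staircases by some other means.
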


\begin{proof}
    $\C_\R(\frectangle{3}{3}) = \C_\K(\frectangle{3}{3})=\C_\R(\fgenstaircase{3}{3}1) = (0, 0)$, so $(\R, \rectangle)$, $(\K, \rectangle)$, and $(\R, \genstaircase)$ are not pet. 
    We will first show that $(\R, \rectangle)$ is forced and miserable. 

Let $\lambda=\frectangle rc$.
First, we show that the only Conway pairs possible are swap and $(k,k)$ for $k\geq0$. 
To this end, notice that 
\[
(\R, \lambda[r-1,c-1]),(\R, \lambda[r-2,c-1]),(\R, \lambda[r-1,c-2]),\text{ and }(\R, \lambda[r-2,c-2])
\]
are swap positions. In particular, 
\begin{align}
    \C_\R(\lambda[r-1,c-1])=\C_\R(\lambda[r-2,c-2])&=(0,1)\label{eq:swap1}\\
    \C_\R( \lambda[r-1,c-2])=\C_\R( \lambda[r-2,c-1])&=(1,0)\label{eq:swap2}
\end{align} 
whenever the corresponding positions are well-defined. 
Note that any other position can move to both a $(0,1)$ and $(1,0)$ position from \eqref{eq:swap1} and \eqref{eq:swap2}, respectively, or neither.
We now argue that all other positions are symmetric, which implies that $(\R,\rectangle)$ is both forced and miserable.
Suppose not, and let $i<r-2$ and $j<c-2$ be maximal  with respect to $i+j$ such that  
$(\R,\lambda[i,j]))$ is not symmetric. By the choice of $i,j$, the positions reachable from $\lambda[i,j]$ are  all symmetric or  swap. Furthermore, if  either swap position is  reachable, both  are.
It follows that $(\R,\lambda[i,j]))$ is symmetric, a contradiction.
In particular, there are no swap positions other than those described in \eqref{eq:swap1} and \eqref{eq:swap2}.

We can extend the above proof to show that $(\R,\genstaircase)$ is both forced and miserable.
Indeed, we only need to consider the subpositions of $\lambda=\fgenstaircase rck$ which are not of rectangular shape, and show that they are all symmetric.
Again, let $\lambda[i,j]$ be such a subposition, where we chose $i,j$  such that $i+j$ is maximized. By the choice of $i,j$, observe that all reachable positions consist of symmetric ones, and in addition they may they include both swap positions $(0,1)$ and $(1,0)$. By the same reasoning it follows that $(\R,\lambda[i,j]))$ is symmetric, a contradiction.
 \end{proof}

As noted in \cref{sec:queen}, $(\Queen,\rectangle )$ is equivalent to the well-known game \textsc{Wythoff}. It was shown in \cite[Propositions 10 and 11]{GURVICH201854} that \textsc{Wythoff} is miserable but not pet and returnable but not forced. It is more involved to show that $(\Queen,\genstaircase )$ is in the same region; the cases when $r=c=2$, when $r=2$ and $c=3$, and when $r\geq 3$ and $c \geq 3$ require separate treatment. We summarize these facts in the following theorem, the proof of which we omit in the interest of brevity. 

\begin{theorem}
    $(\Queen,\genstaircase )$ and $(\Queen,\rectangle )$ 
    are miserable but not pet and returnable but not forced.
\end{theorem}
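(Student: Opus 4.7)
The plan is to treat the two families separately. For $(\Queen,\rectangle)$, Section~\ref{sec:queen} observes that $(\Q, c^r)$ coincides with Wythoff's game $W(c-1,r-1)$, so the four required properties follow directly from Propositions~10 and~11 of \cite{GURVICH201854}. The substantive content lies in $(\Queen,\genstaircase)$. As suggested in the statement, the proof proceeds by a case analysis on $(r,c)$ into $r = c = 2$, $\{r,c\} = \{2,3\}$, and $r \geq 3,\, c \geq 3$. Invoking Observation~\ref{obs:conjugateinvariance} and the fact that $\Q^{-1} = \Q$ together with the easily verified identity $\fgenstaircase{r}{c}{k}' = \fgenstaircase{c}{r}{k}$, I may assume $r \leq c$ throughout.

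In each case, four checks are required. For \emph{not pet}, I would exhibit a subposition whose Conway pair lies outside $\{(0,1),(1,0)\} \cup \{(k,k) : k \geq 2\}$, typically a $(1,1)$-position arising in a low layer of the staircase. For \emph{miserable}, I would verify that every non-swap subposition either cannot reach a swap position or reaches both swap types in a single move. For \emph{returnable}, I would verify that every non-terminal one-move successor of a swap position admits a return move to a swap of the matching type. For \emph{not forced}, I would exhibit a swap subposition with a move to a non-swap subposition, for instance a long rook-like move from a $(1,0)$-position into the interior $(k,k)$-region with $k \geq 2$. For each small case the concrete route is to tabulate the Conway pairs of all subpositions of $\fgenstaircase{r}{c}{k}$ for the first few values of $k$, extract a periodic pattern, and propagate it upward by induction on $k$ using the block structure of generalized staircases.

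For the generic case $r, c \geq 3$ the argument is conceptually uniform: having both dimensions at least three guarantees enough room in every row, column, and main diagonal around any non-swap-to-swap transition to reproduce the Wythoff-style analysis of \cite{GURVICH201854} locally, yielding miserability and returnability with only minor corrections at the staircase steps. The main obstacle will be the tight geometry of the small cases, particularly $r = c = 2$, where the narrow board makes it genuinely delicate to exclude subpositions that reach a $(0,1)$- but not a $(1,0)$-position (or vice versa), which would immediately break miserability; ruling out such pathologies requires a careful induction on $k$ together with explicit computation of Conway pairs in the bottom-right blocks of the staircase.
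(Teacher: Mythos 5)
Your treatment of $(\Queen,\rectangle)$ is exactly the paper's: the identification of $(\Q,c^r)$ with $W(c-1,r-1)$ together with Propositions 10 and 11 of \cite{GURVICH201854} settles that family completely, and your reduction to $r\le c$ via \cref{obs:conjugateinvariance} (using $\Q^{-1}=\Q$ and $(\fgenstaircase rck)'=\fgenstaircase crk$) is sound.

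For $(\Queen,\genstaircase)$, however, what you have written is a plan rather than a proof. The paper announces the same three-case split ($r=c=2$; $r=2,c=3$; $r,c\ge 3$) and then explicitly omits the argument ``in the interest of brevity,'' so there is no detailed proof in the paper to compare against; but your proposal does not supply the missing details either. Every substantive step is stated in the conditional --- ``I would exhibit,'' ``I would verify,'' ``tabulate \dots\ extract a periodic pattern \dots\ propagate it upward by induction'' --- and none of these verifications is actually carried out. The heart of the matter is precisely the inductive propagation of Conway pairs across the $r\times c$ blocks of the staircase (the analogue of the maximal-$(i+j)$ induction used for $(\R,\genstaircase)$ in \cref{thm:rookrect}), together with the explicit computation of the pairs in the corner blocks that seeds the induction; you identify this as the hard part but do not do it. Your suggestion that $r,c\ge3$ lets one ``reproduce the Wythoff-style analysis locally'' is also not obviously correct: the Gurvich--Ho argument for \textsc{Wythoff} exploits the global structure of its $\P$-positions, and the \textsc{Queen} moveset's long diagonal and orthogonal moves cross step boundaries, so it is not clear the analysis localizes to individual steps without a genuinely new argument. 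In short, the proposal is consistent with the route the authors say they took, but as it stands the $\genstaircase$ half of the statement remains unproved.
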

\subsubsection{Tame but not miserable games}
Recall that tame games allow only for swap and symmetric positions. Gurvich and Ho found a small game that is forced and tame but not miserable; see Figure 7 of \cite{GURVICH201854}.
Let  $S_1 =\{\br{\ell + 1, \ell^k}[i, j] : \ell \geq 3, k > \ell, i,j \in \mathbb Z^{\geq0}\}$. We show next that the games $(\R, S_1)$ belong to the same region.

\begin{theorem}\label{thm:tameNotMisRook}
$(\R, S_1)$ is forced and tame but not miserable. 
\end{theorem}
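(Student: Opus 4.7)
The plan is to establish the three properties by classifying the subpositions arising in $S_1$ and computing their Conway pairs. Every element of $S_1$ is a subpartition $\br{\ell+1,\ell^k}[i,j]$, so I would split the analysis into three shapes: the corner $\br{1}$, the rectangles $\br{(\ell-j)^{k-i+1}}$ obtained whenever $i\ge 1$, which are swap or symmetric by \cref{thm:rookrect}, and the \emph{bumps} $\br{c+1,c^m}$ obtained when $i=0$ and $\ell-j\ge 1$, with $c=\ell-j$ and $m=k$. A quick parameter check shows that a bump $\br{c+1,c^m}$ is itself in $S_1$ precisely when $m\ge\max(c+1,4)$, and that this condition is preserved when passing to a smaller bump $\br{c+1-j,(c-j)^m}$ reached by a column move. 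The heart of the proof will be an induction on $c$ showing that every such bump is symmetric.

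For the inductive step I would fix a bump $\br{c+1,c^m}$ with $m\ge\max(c+1,4)$ and inspect its moves: smaller bumps $\br{c+1-j,(c-j)^m}$ for $1\le j\le c-1$ (symmetric by the induction hypothesis), the corner $\br{1}$ with Conway pair $(0,1)$, and the row-moves $\br{c^r}$ for $r=1,\ldots,m$ (swap or symmetric by \cref{thm:rookrect}). The multisets of $\SG$- and $\mathscr{G}^-$-values of these moves disagree only at swap moves, namely $\br{1}$ together with any of $\br{1},\br{1,1},\br{2},\br{2,2}$ that may appear among the row-moves. The crucial observation is that each of the two sets already contains both $0$ and $1$: when $c\ge 3$, the bound $m\ge c+1$ places $\br{c^c}$ (pair $(0,0)$) and a rectangle $\br{c^r}$ with $(r-1)\oplus(c-1)=1$ (pair $(1,1)$) among the row-moves, and when $c\in\{1,2\}$ the two swap rectangles of column width $c$ contribute $\{0,1\}$ to each set in exchanged roles. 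Hence the two $\Mex$ values agree and $\SG_\R(\br{c+1,c^m})=\mathscr{G}^-_\R(\br{c+1,c^m})$, so the bump is symmetric. Combined with the rectangle case, this establishes tameness of $(\R,S_1)$.

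For forcedness, the classification shows that no bump in $S_1$ is swap, so the swap positions inside $S_1$ are exactly the four rectangles $\br{1},\br{2},\br{1,1},\br{2,2}$. A direct check confirms the forced condition: $\br{2}$ and $\br{1,1}$ move only to $\br{1}$, while $\br{2,2}$ moves only to $\br{2}$ and $\br{1,1}$, so every move from a $(0,1)$-position lands on a $(1,0)$-position and vice versa. For the failure of miserability I would use the witness $\lambda=\br{\ell+1,\ell^k}$ itself (for instance $\br{4,3^4}$). By the inductive argument $\lambda$ is symmetric and hence not swap, and its moves lead to smaller bumps (symmetric), to the corner $\br{1}$ which is a $(0,1)$-position, and to rectangles $\br{\ell^r}$ for $r=1,\ldots,k$ which are symmetric because $\ell\ge 3$ rules out all four swap rectangles. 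Thus $\lambda$ can reach a $(0,1)$-swap but never a $(1,0)$-swap, violating condition (iii) in the definition of \emph{miserable}.

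The main obstacle is the inductive symmetry of bumps, in particular the bookkeeping showing that both $0$ and $1$ lie in each of the $\SG$- and $\mathscr{G}^-$-sets of row-moves; this is where the hypothesis $m\ge\max(c+1,4)$ is used. The forcedness and non-miserability statements then reduce to a finite inspection of the four small swap rectangles and the single family of witnesses $\br{\ell+1,\ell^k}$.
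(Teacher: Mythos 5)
Your proof is correct and follows essentially the same route as the paper: reduce everything to \cref{thm:rookrect} for the rectangular subpositions, observe that the column of width $c$ below a first-row position supplies either both swap values (when $c\le 2$) or a $(0,0)$- and a $(1,1)$-position (when $c\ge 3$), so that the first-row ``bumps'' are symmetric, and then read off forcedness from the four small swap rectangles and non-miserability from the starting position reaching the corner $(0,1)$ but no $(1,0)$-position. Your explicit induction on $c$ merely makes precise what the paper states more tersely about the columns of $\br{\ell^{\ell+1}}$.
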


\begin{proof}
By \cref{thm:rookrect}, $(\R,\rectangle)$ is forced and miserable. In particular, it contains only swap and $(t,t)$-positions. 
Fix any $\br{\ell+1,\ell^k}\in S_1$ with $k > \ell \geq 3$. The subposition $(\R, \br{\ell + 1, \ell^k})[k-\ell,\ell]=(\R, \br{\ell^{\ell+1}})=(\R, \frectangle{\ell+1}{\ell})$ is well-defined in all cases. 
This subposition contains both swap positions in each of the last two columns. All other columns contain a $(0,0)$- and a $(1,1)$-position but neither swap position. Also, $(\R, \br{\ell + 1, \ell^k})[0,\ell]$ is a $(0,1)$-position. Thus $(\R, \br{\ell + 1, \ell^k})[i,j])$ is a $(t,t)$-position for some $t\geq2$ and for all  $i\in [0,k-l]$ and $j\in [0 ,\ell]$. 

This implies $(\R, S_1)$ is tame. However, since $\ell\geq 3$, there is a move from the starting position, to a $(0,1)$-position, but not to a $(1,0)$-position, hence it is not miserable.
The only nonterminal swap positions are in the subposition $(\R, \br{\ell + 1, \ell^k})[k-1,\ell-2])=(\R,\br{2^2})$. This subposition is forced.
\end{proof}

The region that is tame but not miserable and returnable but not forced was not known by us to contain any game.
We show that it is occupied by $(\Queen, S_2)$, where $S_2=\{\br{5, 4^{k}}[i,j] : k \geq 7, i,j \in \mathbb Z^{\geq0}\}$.

\begin{theorem}\label{thm:tameNotMisQueen}
$(\Queen, S_2)$ is tame but not miserable and returnable but not forced. 
\end{theorem}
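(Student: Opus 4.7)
The plan is to mirror \cref{thm:tameNotMisRook}, exploiting the known CGH classification of Wythoff $=(\Queen,\rectangle)$ as miserable but not pet and returnable but not forced. The structural observation is that $\br{5,4^k}$ is the rectangle $\br{4^{k+1}}$ with one extra cell in row $0$, column $4$. Every subposition $\br{5,4^k}[i,j]$ with $i\ge 1$ is a rectangular subposition of the enveloping Wythoff game, hence already swap or $(t,t)$-symmetric by the tameness of Wythoff; the returnability of any such swap is inherited directly. The interesting positions are thus the five ``top-row'' subpositions $\lambda^{(j)} := \br{5,4^k}[0,j] = \br{5-j,(4-j)^k}$ for $j\in\{0,1,2,3\}$, together with the terminal $\lambda^{(4)} = \br{1}$.

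I would compute $\C_\Queen(\lambda^{(j)})$ by backwards induction on $j$. Enumerating Queen moves from $\lambda^{(j)}$, the targets split into the later top-row positions $\lambda^{(j+h)}$ (horizontal moves); the rectangles $\br{(4-j)^{k-i+1}}$ for $1\le i\le k$ (vertical moves); and the rectangles $\br{(4-j-h)^{k-h+1}}$ for $1\le h\le 3-j$ (diagonal moves). All rectangular targets are tame, with $\SG$- and $\mathscr G^-$-values read off the standard Wythoff table. Direct \Mex{} computations on both coordinates then yield Conway pairs of the form $(0,1)$, $(1,0)$, or $(t,t)$ at each $\lambda^{(j)}$. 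The threshold $k\geq 7$ guarantees that the Wythoff P-positions $\br{2,2,2}$, $\br{3,3}$, and $\br{4,4,4,4,4,4}$ all appear as subpositions of the game, which is what makes the relevant $\SG$- and $\mathscr G^-$-spectra stabilise and prevents the creation of a $(0,t)$- or $(t,0)$-position with $t\geq 2$ at the top row. This establishes that $(\Queen,S_2)$ is tame.

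The remaining three properties are verified at explicit subpositions. For not forced, the Wythoff P-position $\br{3,3}$ appears as $\br{5,4^k}[k-1,1]$ and has $\C_\Queen = (0,1)$; its vertical move $(1,0)\to \br{3}$ lands on the symmetric position with Conway pair $(2,2)$, so this $(0,1)$-swap has a move to a non-swap target. For returnable, every swap reached via a non-terminal move falls either inside the rectangle (where Wythoff's returnability applies) or among the finitely many top-row positions $\lambda^{(j)}$, which are dispatched by direct inspection using the Conway-pair table from the previous step. Finally, for not miserable, the bump cell at $(0,4)$ produces an extra $(0,1)$-target $\br{1}$ while the vertical and diagonal moves from $\lambda^{(0)} = \br{5,4^k}$ land in the rectangular region; the \Mex{} computation shows that $\C_\Queen(\br{5,4^k}) = (t,t)$ with $t\geq 2$, yet $\br{5,4^k}$ reaches a $(0,1)$-position but never a $(1,0)$-position, contradicting miserability.

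The main obstacle is the bookkeeping of the Wythoff Conway pairs for rectangles $\br{c^r}$ with $c\in\{1,2,3,4\}$ and $1\le r\le k$, which underlies every \Mex{} computation above. One must verify both that the \Mex{} output at each $\lambda^{(j)}$ lies in $\{(0,1),(1,0),(t,t)\}$ rather than $(0,t)$ or $(t,0)$ with $t\ge 2$, and that the move profile from $\lambda^{(0)}$ really does miss the $(1,0)$-swaps. This is a finite but delicate verification, and $k\geq 7$ is precisely the threshold at which the configuration near the top of the partition settles into the stable tame pattern needed for the argument.
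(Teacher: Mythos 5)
Your proposal is correct and follows essentially the same route as the paper's proof: both reduce everything below the top row to the known classification of Wythoff $=(\Queen,\rectangle)$ on rectangular subpositions, handle the finitely many top-row positions $\br{5,4^k}[0,j]$ by direct \Mex{} computation against the Wythoff table, and exhibit the same witnesses (the move to the $(0,1)$-position $\br{1}$ with no move to a $(1,0)$-position for non-miserability, and the swap positions around $\br{3^3}$ for returnable-but-not-forced). The only difference is organizational --- the paper anchors the tameness argument on the block $\br{4^7}$ and the column-wise location of swap positions, while you run a backwards induction over the five top-row positions --- and both leave the same finite verification implicit.
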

\begin{proof}
Any fixed $\br{5,4^k}\in S_2$ with $k \geq 7$ contains the subpartition $\br{5, 4^{k}}[k-6,0]=\br{4^7}$.
Also, $(\Queen, \br{4^7})$ contains only swap and $(t,t)$-positions, where the first column contains both $(0,0)$- and $(1,1)$-positions. The first column does not admit any swap position while the other columns contain both swap positions.
Further, $(\Queen,\br{5, 4^{k}}[0,4])$ is a $(0,1)$-position. 
Thus $(\Queen,\br{5, 4^{k}}[i,j])$ is a $(t,t)$-position for some $t\geq2$ and for all $i\in [0,k-7]$ and $j\in [0,3]$.

This implies $(\Queen, S_2)$ is tame. However, there is a move from the starting position to a $(0,1)$-position, but not to a $(1,0)$-position, hence it is not miserable.
The only nonterminal swap positions are in the subposition $(\Queen,\br{5, 4^{k}}[k-4,1])=(\Queen,\br{3^3})$. This subposition is returnable but not forced.
\end{proof}

\subsubsection{Domestic but not tame games}
Recall that in a domestic game, a value $0$ can appear in a Conway pair only if the position is either swap or symmetric.
\begin{theorem}
$(\Knight,\genstaircase )$ and $\Knight$
    are domestic but not tame and returnable but not forced. 
\end{theorem}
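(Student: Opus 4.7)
The plan is to leverage the game-equivalence $(\Knight,\lambda)\sim(\Dr,\varphi_\Knight(\lambda))$ established in Section~\ref{sec:pawnknight}. Because this game-equivalence is implemented by a labelled-DAG isomorphism, it preserves Sprague--Grundy values, mis\`ere Grundy values, and, more importantly, the full set of Conway pairs reachable from a given position in one move. Consequently each of the six CGH properties transfers between the two games, and the classification of $\Knight$ reduces to statements about the $\varphi_\Knight$-image inside $\Dr$, which is already known in \cite{Gottlieb2022LCTR} to be domestic and returnable but neither tame nor forced.

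For the positive properties, I would invoke this reduction directly. Since $\Dr$ is domestic and returnable on $\Y^+$, so is $(\Knight,\lambda)$ for every $\lambda\in\Y^+$, and \cref{obs:subsetproperty} then transfers both properties to the restriction $(\Knight,\genstaircase)$. For the negative properties (not tame and not forced), explicit witnesses are required; since $\genstaircase\subseteq\Y^+$, any witness inside $\genstaircase$ automatically settles the corresponding claim for $\Knight$, so it suffices to search within $\genstaircase$.

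My strategy is to compute $\varphi_\Knight(\fgenstaircase{r}{c}{k}[i,j])$ for small parameters until the resulting Downright partition exhibits either a non-tame Conway pair, or a swap Conway pair whose Downright moves do not all land on the opposite swap. A short recursive computation produces $\C_\Dr(\br{5,4,2,1,1})=(2,1)$, which is neither swap nor symmetric; since $\br{5,4,2,1,1}$ contains neither $\br{3}$ nor $\br{1,1,1}$ as a subpartition, it lies in the image of $\varphi_\Knight$ and hence furnishes a candidate not-tame witness. An analogous small-parameter search identifies a $\Dr$-swap position with a non-swap child, giving the not-forced witness.

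The main obstacle is verifying that these witnesses actually sit inside $\genstaircase$, since not every Downright partition is the $\varphi_\Knight$-image of a subposition of some generalized staircase. This is purely computational: one must produce concrete $(r,c,k,i,j)$ such that $\fgenstaircase{r}{c}{k}[i,j]$ pulls back (via game-equivalence) to the chosen Downright Conway pair, and the parameter-richness of the family $\fgenstaircase{r}{c}{k}[i,j]$ makes this step tractable by direct enumeration. No new theoretical machinery is required beyond the reduction to $\Dr$ and the classification recorded in \cite{Gottlieb2022LCTR}.
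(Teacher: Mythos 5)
Your reduction is sound and, for the positive half of the statement, genuinely different from what the paper does. The paper proves domesticity and returnability of $\Knight$ by a direct contradiction argument on the Knight DAG: it takes a minimal $\lambda$ with $\C_\Knight(\lambda)\in\{(0,2),(2,0)\}$ and chases Conway pairs through $\lambda[2,1]$, $\lambda[1,2]$, $\lambda[3,3]$, $\lambda[4,2]$, $\lambda[5,4]$, $\lambda[4,5]$ until a contradiction appears, with a similar chase for returnability. You instead observe that game-equivalence is a labelled-DAG isomorphism preserving terminality, Conway pairs, and the move relation, so each CGH property transfers along $(\Knight,\lambda)\sim(\Dr,\varphi_\Knight(\lambda))$; since $\Dr$ is already known to be domestic and returnable on all of $\Y^+$, every $(\Knight,\lambda)$ is too, and \cref{obs:subsetproperty} handles the restriction to $\genstaircase$. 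This is shorter and arguably more illuminating; its cost is that it leans on the external classification of $\Dr$ from \cite{Gottlieb2022LCTR} rather than being self-contained, and you should state explicitly (it is true, but the paper never records it) that game-equivalence preserves mis\`ere Grundy values and hence all six CGH properties. Your not-tame witness for $\Knight$ on $\Y^+$ also checks out: $\C_\Dr(\br{5,4,2,1,1})=(2,1)$ is correct, and $\br{5,4,2,1,1}$ has neither $\br{3}$ nor $\br{1,1,1}$ as a subpartition, so it does pull back to a Knight position.

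The one genuine loose end is the restriction to $\genstaircase$ for the two negative claims. You only promise that a search will produce parameters $(r,c,k,i,j)$ realizing a non-tame Conway pair and a forcedness violation inside $\genstaircase$; until such parameters are exhibited, the proof that $(\Knight,\genstaircase)$ is not tame and not forced is incomplete, and there is no a priori guarantee that your particular partition $\br{5,4,2,1,1}$ is the $\varphi_\Knight$-image of a generalized-staircase subposition. The gap is easily closed with the paper's own witnesses: $\C_\Knight\left(\fgenstaircase 662[1,0]\right)=(2,1)$ for non-tameness (its $\varphi_\Knight$-image is $\br{5,3,2,2,1,1}$, likewise a $(2,1)$-position of $\Dr$), and $(\Knight,\frectangle 65)=(\Knight,\fgenstaircase 651)$ from \cref{thm:petreturnable} for non-forcedness. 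With those two concrete positions inserted, your argument is complete.
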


\begin{proof}
    To see that $\Knight$ and $(\Knight, \genstaircase)$ are not tame, note that $\C_\Knight \left( \fgenstaircase 66{2}[1,0] \right) = (2,1)$. \cref{thm:petreturnable} shows that $\Knight$ and $(\Knight, \genstaircase)$ are not forced. 
    
    To see that $\Knight$ and $(\Knight, \genstaircase)$ are domestic, note that there are at most two moves from any position, so neither coordinate of the Conway pair can be greater than two. Thus it suffices to show that $(2,0)$ and $(0,2)$ cannot occur. 

    The proof is by contradiction. Let $\lambda$ be a minimal partition (with respect to the Young lattice) so that $\C_\Knight(\lambda) = (0,2)$ or $\C_\Knight(\lambda) = (2,0)$. We treat the case $\C_\Knight(\lambda)=(0,2)$; the case $\C_\Knight(\lambda)=(2,0)$ is similar and is left to the reader. 

    If $\C_\Knight(\lambda)=(0,2)$ then, without loss of generality, it must be true that $\C_\Knight(\lambda[2,1])=(x,1)$ and $\C_\Knight(\lambda[1,2])=(y,0)$ for some $x, y \in \{1, 2\}$. Due to minimality, $y$ cannot be 2, so $y = 1$. 

    If $x = 2$, then $\C_\Knight(\lambda[3,3])=(0,2)$, contradicting minimality, so $x = 1$. 
    
    If $\lambda[3,3]$ is not well-defined, then $\lambda[4,2]$ must be well-defined since $\lambda[2,1]$ cannot be terminal, so $\lambda[4,2]$ must be terminal and $\C_\Knight(\lambda[4,2])=(0,1)$, contradicting $\C_\Knight(\lambda[2,1])=(1,1)$.
    Thus $\lambda[3,3]$ is well-defined and
    $\C_\Knight(\lambda[3,3])=(z,2)$ for some $z \in \{0, 2\}$. Due to minimality $z$ cannot be $0$, so $z = 2$. It follows that $\C_\Knight(\lambda[4,2])=(0,0)$, which implies 
    $\C_\Knight(\lambda[5,4])=(1,1)$, from which it follows that $\C_\Knight(\lambda[4,5])=(0,0)$. This implies 
    $\C_\Knight(\lambda[2,4])=(1,1)$, a contradiction with $\C_\Knight(\lambda[1,2])=(1,0)$. This concludes the proof for domesticity. 

    To see that $\Knight$ is returnable (which implies that $(\Knight, \genstaircase)$ is returnable), consider a non-terminal partition $\lambda$ with $\C_\Knight(\lambda) = (0,1)$. Since there are no $(k,0)$-positions with $k\geq2$, it must be possible to move to a $(1,0)$-position. Similarly a $(1,0)$ position can always move to a $(0,1)$-position. Without loss of generality, suppose that $\C_\Knight(\lambda[1,2])=(1, 0)$. Then $\C_\Knight(\lambda[2,1]) \in \{(1,2), (1, 0), (2,2)\}$. If $\C_\Knight(\lambda[2,1])= (1,0)$ then $\Knight(\lambda)$ is returnable. Also, if $\C_\Knight(\lambda[2,1])=(2,2)$ then $\C_\Knight(\lambda[3,3])=(0,1)$, so $\Knight(\lambda)$ is again returnable. 

 Finally, suppose $\C_\Knight(\lambda[2,1])=(1,2)$. If $\C_\Knight(\lambda[3,3]) = (0,1)$, then $\Knight(\lambda)$ is returnable. The only alternative is that $\C_\Knight(\lambda[3,3]) = (2,1)$. In this case, we must have $\C_\Knight(\lambda[4,2]) = (0,0)$. It follows that $\C_\Knight(\lambda[5,4]) = (1,2)$. From this, we obtain $\C_\Knight(\lambda[4,5) = (0,0)$.
 Since non-terminal swap positions must move to at least another swap it follows that $\C_\Knight(\lambda[2,4]) = (0, 1)$ which is a contradiction with  $\C_\Knight(\lambda[4,5]) = (0,0)$. 
\end{proof}

\subsubsection{Non-domestic games}

\begin{theorem}
    $\R,\Queen$, and $(\K,\genstaircase)$
    are neither domestic nor returnable.
\end{theorem}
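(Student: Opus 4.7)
The plan is to prove both non-domesticity and non-returnability for each of the three games by exhibiting concrete witness partitions whose Conway pairs are verified via the $\Mex$-recursion on $\SG_M$ and $\mathscr{G}^-_M$. For each game I need two witnesses: a partition $\lambda$ with $\C_M(\lambda) = (0, k)$ or $(k, 0)$ for some $k \geq 2$ (witnessing non-domesticity), and a swap partition $\mu$ admitting a move to a non-terminal position $y$ which in turn has no move to a swap of the matching type (witnessing non-returnability). The two witnesses may coincide, but typically they do not.

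For $\R$, my candidate for non-domesticity is $\lambda = \br{5, 4, 4, 4, 1}$. Its non-terminal $\R$-children are $\br{4,3,3,3}, \br{3,2,2,2}, \br{2,1,1,1}, \br{4,4,4,1}, \br{4,4,1}, \br{4,1}$, together with the terminal $\br{1}$ appearing as both the $(0, 4)$- and the $(4, 0)$-successor. A bottom-up $\Mex$-recursion, organized via \cref{obs:conjugateinvariance} to halve the case analysis, yields the Conway pairs $(1, 4), (2, 2), (3, 3), (1, 4), (2, 2), (3, 3)$ and $(0, 1)$ respectively. Thus $\SG_\R(\lambda) = \Mex\{0, 1, 2, 3\} = 4$ and $\mathscr{G}^-_\R(\lambda) = \Mex\{1, 2, 3, 4\} = 0$, so $\C_\R(\lambda) = (4, 0)$ and $\R$ is not domestic. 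For $\Q$ the same approach works on a slightly modified asymmetric partition, since the extra diagonal Queen moves tend to amplify asymmetries in Conway pairs; and for $(\K, \genstaircase)$ a generalized staircase $\fgenstaircase rck$ of small but non-trivial parameters contains a subposition whose three-way King interaction yields a Conway pair with a $0$ paired against some $k \geq 2$.

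For non-returnability the template is the same in each case: locate a non-terminal $y$ whose every child has Conway pair differing from the required swap value, then exhibit a swap ancestor $\mu$ admitting a move to $y$. For $\R$, observe that $\br{3, 3, 3}$ has $\C_\R = (0, 0)$ and each of its four children $\br{2, 2, 2}, \br{1, 1, 1}, \br{3, 3}, \br{3}$ is a $(k, k)$-position with $k \geq 2$; since $\br{3, 3, 3}$ itself has $\mathscr{G}^-_\R = 0$ it cannot be a swap-child, so one must find a slightly larger $y$ with no swap child but whose Conway pair is compatible with being a swap-child, together with the required swap ancestor. The main obstacle throughout is computational: the minimum witnesses are found by brute-force search with no evident structural shortcut, and the recursive $\Mex$-tree for even a medium-sized partition is large. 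Nonetheless, the verification is entirely mechanical, and \cref{obs:dagsubgraph} together with conjugate invariance allows Conway pairs to be reused across subpartitions, making automated computation straightforward.
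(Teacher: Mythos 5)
Your overall strategy---exhibiting explicit witness partitions and verifying their Conway pairs via the $\Mex$-recursion for $\SG$ and $\mathscr G^-$---is exactly the paper's, and the single witness you actually complete is correct: $\C_\R(\br{5,4,4,4,1})=(4,0)$ checks out, with the children's Conway pairs $(1,4),(2,2),(3,3),(0,1)$ as you list them. (The paper uses the marginally smaller $\br{5,4^3}$, which also has Conway pair $(4,0)$; your extra part of size $1$ only replaces the column children $\br{4^3},\br{4^2},\br{4}$ by their conjugates-with-a-corner and adds the terminal $\br{1}$, so the two computations are essentially the same.)

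The gap is that a proof of this theorem \emph{is} its witnesses, and you supply one of the six that are needed. For non-domesticity of $\Queen$ you assert that ``a slightly modified asymmetric partition'' works, and for $(\K,\genstaircase)$ that some small generalized staircase contains a suitable subposition, without naming either; the paper exhibits $\C_\Queen(\br{9,8^4})=(0,10)$ and $\C_\K(\fgenstaircase 224[0,1])=(3,0)$. For non-returnability you give no witness for any of the three games: for $\R$ you correctly observe that $\br{3,3,3}$ has no swap child but then note, also correctly, that it cannot serve as the intermediate position $y$ in the definition of returnable (a $(0,0)$-position is never a child of a swap position, since a $(0,1)$-parent forbids $\SG=0$ children and a $(1,0)$-parent forbids $\mathscr G^-=0$ children), and you leave the actual search unresolved. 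The paper's witnesses are: $\C_\R(\br{6,5^4})=(0,1)$ with no move from $\br{6,5^4}[1,0]=\br{5^4}$ to a $(0,1)$-position; $\C_\Queen(\br{11^{13},10^4})=(0,1)$ with no move from $\br{11^{13},10^4}[12,0]$ to a $(0,1)$-position; and $\C_\K(\fgenstaircase 432[1,0])=(1,0)$ with no move from $\fgenstaircase 432[2,0]$ to a $(1,0)$-position. Until you produce and verify concrete exhibits of this kind for the five remaining claims, what you have is a correct plan rather than a proof; the missing content is not conceptual, but nothing in the statement follows without it.
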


\begin{proof} $\R$, $\Queen$, and $(\K, \genstaircase)$  are not domestic because $\C_\R(\br{5,4^3})=(4, 0)$ and $\C_\Queen(\br{9,8^4})=(0,10)$ and $\C_\K(\fgenstaircase 224[0,1])=(3,0)$, respectively. 

$\R$ is not returnable since $\C_\R(\br{6, 5^4})= (0,1)$ and there is no move from $(\R,\br{6, 5^4}[1,0])$ to a $(0,1)$-position. $\Queen$ is not returnable since $\C_\Queen(\br{11^{13}, 10^4})= (0,1)$ and there is no move from $(\Queen,\br{11^{13}, 10^4}[12,0])$ to a $(0,1)$-position. $(\K, \genstaircase)$ is not returnable since $\C_\K(\fgenstaircase 432[1,0]) = (1,0)$ and there is no move from $(\K,\fgenstaircase 432[2,0])$ to a $(1,0)$-position. 
\end{proof}

\section{Open questions in impartial chess} \label{sec:questions}

Hooks (see \cref{def:partition-families})
are a parametrized class of partitions studied in \cite{basic2022some,Basic_2023,Ilic2019}.
The restriction of impartial chess games to hooks is not very interesting from the point of view of CGH classification since they are forced and pet for every chess piece.

A more interesting family to consider are \emph{thick hooks}  \cite{basic2022some,Basic_2023} of the form $\br{c^a, b^{r-a}}$,  which are simultaneously subpartitions of generalized staircases and generalizations of both rectangles and hooks. Computing the Sprague-Grundy values of generalized staircases might be out of reach for \textsc{King} and \textsc{Rook}, but we believe it could be possible for thick hooks.

For rectangles, the coefficient 2 in \cref{lem:double} can be replaced with the golden ratio \cite{Wythoff}. It would be interesting to know if the coefficient 2 can be reduced for arbitrary partitions. Showing that it cannot be reduced would require finding an infinite family of partitions such that the inequality in \cref{lem:double} holds. Two such partitions are $\br{3^2}$ and $\br{9^2,7^3}$.

\begin{conjecture}
Let $r$ be odd and $k \geq 1$. Then
\[\SG_\K(\fgenstaircase  rrk)=\begin{cases}
0 & \text{$k \bmod (r+2)$ is odd} \\ 
1 & \text{$k \bmod (r+2)$ is even and not 0} \\
2 & \text{$k \bmod (r+2)$ is 0}.
\end{cases}\]
\end{conjecture}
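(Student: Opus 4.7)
The plan is to proceed by strong induction on $k$, following the blueprint of the preceding lemma which handles the even-$r$ case. The base case $k=1$ is immediate: $\fgenstaircase{r}{r}{1} = \frectangle{r}{r}$ with both sides odd, so \cref{thm:SGKingrectangle} gives $\SG_\K = 0$, agreeing with the conjecture since $1 \bmod (r+2) = 1$ is odd. As additional base cases I would verify $k = 2, \dots, r+2$, covering one full period; after that, the inductive step can step down by any amount in $\{1, \dots, r+2\}$.

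The obstacle for the inductive step is that the three King-children $\fgenstaircase{r}{r}{k}[1,0]$, $\fgenstaircase{r}{r}{k}[0,1]$, and $\fgenstaircase{r}{r}{k}[1,1]$ are thick-hook-shaped partitions, not generalized staircases, so the raw hypothesis does not apply to them. The remedy, as in the even-$r$ lemma, is to prove a stronger statement by induction, specifying $\SG_\K(\fgenstaircase{r}{r}{k}[i,j])$ throughout a top strip of cells with $\min(i,j) < r$ (and, by conjugate invariance via \cref{obs:conjugateinvariance}, symmetrically along the left strip), parametrised by $(i \bmod r,\, j \bmod r)$, by $\lfloor (i+j)/r \rfloor$, and by $k \bmod (r+2)$. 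Inside the interior block $i, j \geq r$, the inductive hypothesis applied to $\fgenstaircase{r}{r}{k-1}$ (and its subpositions) already supplies the SG-values, since $\fgenstaircase{r}{r}{k}[r, j] = \fgenstaircase{r}{r}{k-1}[0, j]$ and $\fgenstaircase{r}{r}{k}[i, r] = \fgenstaircase{r}{r}{k-1}[i, 0]$. One then propagates the values outward from the interior back to the $(0,0)$-corner by repeated application of the Mex recursion, column-by-column or row-by-row, exactly as in steps \ref{en:s1}--\ref{en:s3} of the even-$r$ proof.

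The hard part is identifying the correct pattern in the top strip. In the even-$r$ case, $\SG_\K$ restricted to the top $r \times r$ block took only the four values $\{0, 1\} \cup \{2, 3\}$ with a clean parity description. For odd $r$, \cref{prop:SGdiam} only bounds the values by $\lp(\DAG{\K}{\cdot}) \sim 2rk$, so we must expect a genuinely longer and $k$-dependent pattern that cycles with period $r+2$. My concrete strategy would be to compute the full SG-table by computer for $r = 3$ (and ideally $r = 5$) and small $k$, extract the pattern, and use it to formulate the strengthened inductive hypothesis. Getting the pattern right is delicate because the King has three moves, so each boundary cell's Mex depends on three values whose cyclic dependencies must all be tracked simultaneously.

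A complementary approach worth trying, if the direct computation becomes unwieldy, is to seek a structural pairing or game-equivalence (in the sense of \cref{sec:dag}) between subgames of $\fgenstaircase{r}{r}{k+r+2}$ and $\fgenstaircase{r}{r}{k}$ that preserves $\SG_\K$, thereby reducing the conjecture to the finite check on $1 \leq k \leq r+2$. Such an argument would bypass the need to describe the SG-values in the top strip explicitly; however, since King moves do not act cleanly on an $(r+2)$-shift (they connect neighbouring diagonal layers rather than staying within one), I expect this route to be harder than the direct inductive one, and I would fall back on it only if the propagation in the strengthened induction turns out to be intractable.
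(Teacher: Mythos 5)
This statement is stated in the paper as a \emph{conjecture} in \cref{sec:questions}; the paper offers no proof of it, so there is nothing to compare your attempt against on the paper's side. Judged on its own terms, your proposal is a research plan rather than a proof, and the gap is exactly where you locate it yourself: the entire mathematical content of the intended argument is the strengthened inductive hypothesis describing $\SG_\K$ on the boundary strip of $\fgenstaircase{r}{r}{k}$, and you never state it, let alone verify that it propagates. In the even-$r$ lemma the propagation works because the strip pattern is $k$-independent after one step (alternation between $1$ and $\{2,3\}$), so the induction closes after a single application of steps \ref{en:s1}--\ref{en:s3}. For odd $r$ the conjectured answer has genuine period $r+2$ in $k$, so any analogous invariant must itself evolve with $k$ and return to its initial state only after $r+2$ iterations; showing that this cycle closes is the whole problem, and ``compute tables for $r=3,5$ and extract the pattern'' does not discharge it. Your reduction of the interior block via $\fgenstaircase{r}{r}{k}[r,j]=\fgenstaircase{r}{r}{k-1}[0,j]$ and conjugate invariance (\cref{obs:conjugateinvariance}) is correct and is the right skeleton, but as written the argument could not be completed without new ideas.

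Two smaller points. First, the phrase ``the inductive step can step down by any amount in $\{1,\dots,r+2\}$'' is not meaningful for \textsc{King} on generalized staircases: the recursion naturally relates $k$ to $k-1$ only (via the identity above), so the induction must track the full boundary data at every intermediate $k$, not jump by a period. Second, the fallback idea of a game-equivalence between subgames of $\fgenstaircase{r}{r}{k+r+2}$ and $\fgenstaircase{r}{r}{k}$ would, if it existed, be the cleanest route, but game-equivalence in the sense of \cref{sec:dag} requires equality of the labelled DAGs up to relabelling of moves, which is far too rigid here; at best one could hope for equality of $\SG_\K$-values on corresponding boundary cells, which is again the unproven invariant. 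In short: the approach is reasonable and consistent with how the authors handled the even-$r$ case, but the statement remains, as the paper says, open.
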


\begin{conjecture}
Let $r\neq c$ and both $r,c$ be even and $k \geq 1$. 
Then
\[\SG_\K(\fgenstaircase  rck)=\begin{cases}
2 & \text{ if  $k$ is odd} \\
3 & \text{ if $k$ is even}.
\end{cases}\]
\end{conjecture}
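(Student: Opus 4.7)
The plan is to prove the conjecture by strong induction on $k$, establishing a stronger claim that specifies $\SG_\K(\gs{r}{c}{k}[i, j])$ for \emph{every} subposition; the desired equality at $[0,0]$ then falls out as the $(i, j) = (0, 0)$ case. By conjugate invariance (\cref{obs:conjugateinvariance}) we may assume $c < r$. For the base case $k = 1$, \cref{thm:SGKingrectangle} both supplies a closed formula for $\SG_\K$ at every subrectangle of $\frectangle{r}{c}$ and gives $\SG_\K(\frectangle{r}{c}) = 2$; in particular, the top row alternates between $2$ (at even-indexed columns) and a value in $\{1, 3\}$ (at odd-indexed columns), with an analogous pattern along the left column.

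Two observations drive the inductive step. First, $\gs{r}{c}{k}[r, 0] = \gs{r}{c}{k-1}$, and every subposition with $i \geq r$ is a subposition of $\gs{r}{c}{k-1}$, so the induction hypothesis pins down the $\SG$-values on all rows $i \geq r$. Second, since both $r$ and $c$ are even, \cref{cor:PNoneeven} simplifies to
\[ \gs{r}{c}{k}[i, j] \in \P_\K \iff i \text{ and } j \text{ are both odd}; \]
in particular $\SG_\K(\gs{r}{c}{k}[1, 1]) = 0$. One then mimics the three-step sweep used in the proof of the $(r, r, k)$ lemma in \cref{sec:king}: starting from $\gs{r}{c}{k}[r-1, c-1]$ and moving up and to the left through the top tier, each new cell's $\SG$-value is a $\Mex$ of three already-known values, with the $\P$-characterization fixing $0$ at every odd-odd cell. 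After roughly $r/2$ passes the top tier is filled, and
\[ \SG_\K(\gs{r}{c}{k}) = \Mex\{\SG_\K(\gs{r}{c}{k}[0,1]),\ \SG_\K(\gs{r}{c}{k}[1,0]),\ 0\} \]
evaluates to the claimed $2$ (odd $k$) or $3$ (even $k$).

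The hard part is identifying the correct stronger claim. In the $(r, r, k)$ case the mixed-parity cells could be labelled noncommittally by ``some value in $\{2, 3\}$'' and the top-left $\Mex$ still came out cleanly; already for a rectangle with $r \neq c$, \cref{thm:SGKingrectangle} puts values in $\{1, 3\}$ on mixed-parity cells, with the choice depending on which of $r - i$, $c - j$ is smaller. The stronger claim therefore needs to record each cell's $\SG$-value via an explicit piecewise formula, plausibly keyed off $\lfloor i / r \rfloor$, $\lfloor j / c \rfloor$ and the parities of $r - (i \bmod r)$, $c - (j \bmod c)$, and the sweep must then be checked to preserve this formula at every step. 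The small case $(r, c, k) = (2, 4, 2)$, where the three moves from $\gs{2}{4}{2}$ land on subpositions with $\SG$-values $\{0, 1, 2\}$ and hence $\Mex = 3$, serves both as a sanity check and as a seed for guessing the correct general formula for the top-row and left-column values that the induction must carry.
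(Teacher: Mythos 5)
First, note that the paper does not prove this statement: it is posed as a conjecture in the open-questions section, so there is no proof of record to compare yours against. Judged on its own merits, your proposal is a plan of attack rather than a proof. The skeleton is sensible --- induction on $k$, the reduction of \cref{cor:PNoneeven} to ``$\P_\K$ iff $i$ and $j$ are both odd'' when $r$ and $c$ are both even, the use of \cref{thm:SGKingrectangle} on the base tier, conjugate invariance to assume $c<r$, and a row-by-row sweep modeled on the paper's lemma for $\fgenstaircase{r}{r}{k}$ --- but the one ingredient that would turn it into a proof is exactly the one you defer: the explicit stronger invariant carried by the induction. You acknowledge this yourself (``the hard part is identifying the correct stronger claim,'' ``plausibly keyed off\dots'').

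That gap is not cosmetic. In the $r=c$ case the paper gets away with the coarse labeling ``$0$ at odd-odd cells, $1$ at even-even cells, $2$ or $3$ elsewhere'' because the corner value is $\Mex\{0,x,y\}$ with $x,y\ge 2$, which equals $1$ regardless of which of $2$ or $3$ the mixed-parity neighbours take. Here the conjectured answer alternates between $2$ and $3$ with the parity of $k$, so the corner computation $\Mex\{0,\SG_\K(\lambda[0,1]),\SG_\K(\lambda[1,0])\}$ must distinguish whether the two mixed-parity neighbours cover $\{1\}$ only or all of $\{1,2\}$; a noncommittal ``some value in $\{1,3\}$'' or ``some value in $\{2,3\}$'' cannot decide this. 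So the induction must carry exact values along the top row and left column of every tier, those values must depend on the parity of $k$ (to produce the alternation), and the sweep must be shown to propagate the exact formula across the seam between consecutive tiers, where the geometry is no longer rectangular --- none of which is written down or verified. Until that invariant is stated and pushed through the inductive step, the statement remains, as the paper has it, a conjecture.
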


The Sprague-Grundy values of $(\R,\rectangle)$ and $(\R, \genstaircase)$ are given to us by two-pile \textsc{Nim} and \cref{thm:rookgenstairsg}, respectively. The $\SG$-values of \textsc{Rook} on general partitions are unknown.
$\P/\N$-positions for \textsc{Queen} remain open. Given the complicated resolution of these positions on rectangles, significant progress on more complicated partitions seems unlikely. 

\paragraph{Acknowledgements}
This work is inspired by the recent legacy of the late Elwyn Berlekamp and his pioneering work on impartial chess.

This work was supported in part by the Slovenian Research and Innovation Agency (research projects J1-4008, J1-3002, research program P1-0383, and bilateral project BI-US-24-26-018).
The first author was supported by an internal grant from Rhodes College. 

\pagebreak
\bibliographystyle{elsarticle-num} 
{\footnotesize

}

\end{document}